\declaretheorem{Fact}
\declaretheorem{Assumption}
\DeclareMathOperator{\argmin}{argmin}
\definecolor{gold}{rgb}{0.85,0.65,0}
\numberwithin{subsection}{section}
\numberwithin{equation}{section}
\numberwithin{theorem}{section} 
\numberwithin{lemma}{section} 
\numberwithin{definition}{section} 
\numberwithin{proposition}{section} 
\numberwithin{lemma}{section} 
\numberwithin{corollary}{section} 
\numberwithin{Fact}{section} 
\numberwithin{remark}{section}
\let\emptyset\varnothing
\def\B{{\mathbb{B}}}
\def\R{{\mathbb{R}}}
\def\bI{{\mathbf{I}}}
\def\cC{{\cal C}}
\def\cD{{\cal D}}
\def\cE{{\cal E}}
\def\cI{{\cal I}}
\def\cJ{{\cal J}}
\def\cL{{\cal L}}
\def\cO{{\cal O}}
\def\cP{{\cal P}}
\def\cR{{\cal R}}
\def\sP{\mathscr{P}}
\def\e{{\bm e}}
\def\1{{\bm 1}}
\DeclareMathOperator*{\argmax}{arg\,max}
\DeclareMathOperator{\conv}{conv}
\date{}
\begin{document}
\title{ \textbf{Accelerated Price Adjustment for Fisher Markets\\
			with Exact Recovery of Competitive Equilibrium\thanks{Authors are in $\alpha$-$\beta$ order.} } }
\titlerunning{Accelerated Price Adjustment for Fisher Markets}
	%
	\author{He Chen\inst{1} \and
		Chonghe Jiang\inst{2} \and
		Anthony Man-Cho So\textsuperscript{(\Letter)}\inst{3} }
	\authorrunning{H. Chen et al.}
	%
	\institute{The Chinese University of Hong Kong, Hong Kong SAR, China \\
		\email{hechen@link.cuhk.edu.hk}\\ \and 
		Massachusetts Institute of Technology, Cambridge, MA, US \\
		\email{chonghej@mit.edu}  \\ \and
		The Chinese University of Hong Kong, Hong Kong SAR, China \\
		\email{manchoso@se.cuhk.edu.hk}
	}
\maketitle
\begin{abstract}
The canonical price-adjustment process, t\^atonnement, typically fails to converge to the exact competitive equilibrium (CE) and requires a high iteration complexity of $\tilde{\cO}(1/\epsilon)$ to compute $\epsilon$-CE prices in widely studied linear and quasi-linear Fisher markets. This paper proposes refined price-adjustment processes to overcome these limitations. By formulating the task of finding CE of a (quasi-)linear Fisher market as a strongly convex nonsmooth minimization problem, we develop a novel accelerated price-adjustment method (APM) that finds an $\epsilon$-CE price in $\tilde{\cO}(1/\sqrt{\epsilon})$ lightweight iterations, which significantly improves upon the iteration complexities of t\^atonnement methods. Furthermore, through our new formulation, we construct a recovery oracle that maps approximate CE prices to exact CE prices at a low computational cost. By coupling this recovery oracle with APM, we obtain an adaptive price-adjustment method whose iterates converge to CE prices in finite steps. To the best of our knowledge, this is the first convergence guarantee to exact CE for price-adjustment methods in linear and quasi-linear Fisher markets. Our developments pave the way for efficient lightweight computation of CE prices. We also present numerical results to demonstrate the fast convergence of the proposed methods and the efficient recovery of CE prices. 
\end{abstract}

\section{Introduction}
\textit{Competitive equilibrium} (CE) is one of the most fundamental concepts in economic theory and has many modern applications, such as the digital ad auction, online resource allocation, and fair division; see, e.g., \cite{conitzer2022pacing,balseiro2021regularized,jalota2023fisher,peysakhovich2023implementing}, and the references therein. This concept was initially proposed by \citet{walras2014leon} in the nineteenth century to characterize the ideal status of a general market, in which every agent sells his initial endowment to buy goods.  As a special case of Walras's model, Fisher considered a market that sells $m$ goods to $n$ buyers, where every buyer has a fixed budget \cite{brainard2005compute}. In such a market, a group of allocations and prices is called a CE if
(i) for each buyer, the total price of collected goods does not exceed his budget;
(ii) every buyer gets his optimal bundle; 
(iii) all goods are cleared out. 

The existence of CE was unknown until the work of \citet{arrow1954existence} in the 1950s. Later, \citet{eisenberg1959consensus} showed that the CE of a Fisher market are the Karush-Kuhn-Tucker (KKT) points of a certain convex program. Since then, the computation of CE has received significant attention. Based on the Eisenberg-Gale (EG) program, many polynomial-time algorithms, such as the ellipsoid method \cite{jain2007polynomial}, interior point method \cite{ye2008path}, and combinatorial methods \cite{chaudhury2018combinatorial,devanur2008market,duan2015combinatorial}, were proposed to compute CE. However, these methods adopt a centralized model that precludes parallel computation and requires solving expensive subproblems in each iteration. As markets grow in scale and automated markets prevail in modern applications, there is an increasing need for \textit{lightweight} and \textit{decentralized}\footnote{As opposed to centralized methods, we refer to decentralized methods as algorithms that split the update into subtasks, executed in parallel across multiple nodes.} algorithms. Typical examples of such algorithms include proportional response dynamics \cite{birnbaum2011distributed,wu2007proportional}, first-order methods \cite{gao2020first,liu2025pdhcg}, and t\^atonnement \cite{goktas2023tatonnement,cheung2020tatonnement}.

T\^atonnement, proposed by \citet{walras2014leon}, is one of the earliest lightweight and decentralized algorithms for Fisher markets. It refers to a price-adjustment process that increases the price of a good when the demand exceeds the supply and decreases the price when the demand is smaller \cite{cheung2020tatonnement}. Thanks to its purely price-based update rule, t\^atonnement requires only $\mathcal{O}(m)$ memory to store the iterates,  significantly less than the $\mathcal{O}(mn)$  memory required by other lightweight algorithms, making it appealing in large-scale markets. 
More importantly, as t\^atonnement mimics the price behavior in real-world markets, it has garnered much interest in its theoretical properties, and a long line of work has attempted to establish its convergence.  \citet{walras2014leon} conjectured that t\^atonnement converges to CE. Nearly half a century later, \citet{arrow1958stability} provided a proof for the convergence of continuous-time t\^atonnement under the weak gross substitutes (WGS) assumption. As for the discrete-time t\^atonnement, there are different variants and their convergence properties depend on the specific utility functions \cite{cheung2014analyzing}. \citet{cheung2012tatonnement} showed that for \textit{constant-elasticity-of-substitution} (CES) utilities\footnote{CES utilities are of the form $u_i(y)=(\sum_{j\in[m]}a_{ij}y_j^{\rho_i})^{1/\rho_i}$\text{ for } $y\in\R^m_+$ with $\rho_i\leq1$ and $a_{ij}\geq0$; see, e.g., \cite[Definition 2.5]{cheung2020tatonnement}.} with $0\leq\rho_i<1$, the multiplicative t\^atonnement achieves a linear convergence rate to CE. 
\citet{cheung2020tatonnement} considered entropic t\^atonnement and proved that its iterates converge to CE at a linear rate for complementary CES utilities ($-\infty<\rho_i<0$) and at a sublinear rate for Leontief utilities. Further, in homothetic Fisher markets, \citet{goktas2023tatonnement} established a convergence rate of $\cO((1+E^2)/T)$ for entropic t\^atonnement, where $E$ represents an upper bound on the elasticity of demand and $T$ denotes the iteration number.
However, the above convergence results are not applicable to the most commonly used linear and quasi-linear utilities (characterized by parameters $\rho_i=1$ and $E=+\infty$). Indeed, it has been shown that t\^atonnement does not converge to the exact CE in (quasi-)linear Fisher markets (see \cite[Example 1]{cole2019balancing}), and only weaker convergence guarantees have been established. \citet{cole2019balancing} proved that the entropic t\^atonnement converges to an approximate CE in linear Fisher markets.  \citet{nan2024convergence} showed that for both linear and quasi-linear utilities, the additive t\^atonnement converges to an $\epsilon$-CE at a linear rate of $1-\Theta(\epsilon)$. Consequently, the additive t\^atonnement finds an $\epsilon$-CE in ${\cO}(\log(\frac1{\epsilon})\frac{1}{\epsilon})$ iterations. In comparison, other lightweight algorithms, such as the mirror descent methods, which iteratively adjust bid vectors \cite{gao2020first,birnbaum2011distributed}, converge to the exact CE at a rate of $\cO(1/T)$.  

From the above discussion, we see that with linear and quasi-linear utilities, the dominant price-adjustment processes—t\^atonnement methods—not only fail to converge to a CE but also require a high iteration complexity of $\tilde{\cO}(1/\epsilon)$ for finding an $\epsilon$-CE. These limitations motivate the following questions for (quasi-)linear Fisher markets: 
\begin{enumerate}[itemindent=1em]
\item[\textbf{(Q1)}] \textit{Can we develop a faster price-adjustment process for computing approximate CE prices}?
\item[\textbf{(Q2)}] \textit{Can we refine price-adjustment processes to ensure iterate convergence to exact CE prices}? 
\end{enumerate}

\subsection{Technical Contributions}  In this paper, we develop novel price-adjustment processes to provide affirmative answers to (Q1) and (Q2). Starting from the dual of the EG program, we treat the linear and quasi-linear Fisher markets in a unified manner and formulate the task of computing CE into a box-constrained strongly convex nonsmooth minimization problem. In the new formulation, the objective function merely involves the prices and consists of a sum of exponential and piecewise linear functions. Such a simple structure facilitates the algorithm design. Specifically,
by smoothing those piecewise linear functions, we obtain a surrogate problem whose objective function is strongly convex and smooth with a computable modulus. Therefore, acceleration methods can be applied, leading to the so-called \textit{accelerated price-adjustment method} (APM), a novel price-adjustment process distinct from t\^atonnement: In each iteration, instead of reacting to the present excess supply, APM predicts the future excess supply to adjust the prices. As one of our main contributions, we prove that APM finds $\epsilon$-CE prices in $\tilde{\cO}(1/\sqrt{\epsilon})$ iterations. Such a rate significantly improves upon the iteration complexities of t\^atonnement methods, thereby providing an affirmative answer to (Q1). Furthermore, we show that the prices produced by APM can be used to compute an approximate CE allocation, offering an additional advantage over t\^atonnement methods.

To address (Q2), as our second main contribution, we construct a \textit{recovery oracle} that maps an $\epsilon$-CE price (for sufficiently small $\epsilon$) to an exact CE price and show that it only requires $\cO((m+n)^2)$ arithmetic operations, comparable to the iteration cost of t\^atonnement. This result is significant, as it establishes a direct connection between approximate and exact CE prices in (quasi-)linear Fisher markets and provides a tool for price-adjustment processes to compute exact CE. Subsequently, by coupling the recovery oracle with APM (resp. t\^atonnement methods), we develop the \textit{adaptive} APM (resp. adaptive t\^atonnement), which is guaranteed to find CE prices in finite steps under a practical stopping criterion. To the best of our knowledge, this is the first price-adjustment process with an iterate convergence guarantee to CE prices, providing a definitive answer to (Q2). We note that the techniques developed in this paper, especially the recovery oracle and the acceleration scheme, are novel to the study of computing CE and may also extend to the computation of other market equilibria.

Finally, we demonstrate the practicality and efficiency of the proposed price-adjustment methods via numerical experiments. We compare APM with t\^atonnement, mirror descent, and primal-dual hybrid gradient method (PDHG) \cite{liu2025pdhcg,chambolle2011first} by evaluating their iteration number for finding an approximate CE price. Our numerical results show that, for both synthetic and real-world datasets, APM requires only about a quarter of the iterations of other algorithms. Furthermore, we compare the CPU time of the adaptive APM with the off-the-shelf solver to compute exact CE prices. It turns out that the adaptive APM is $10$-$100$ times faster than the solver. With these competitive numerical performance and strong theoretical guarantees, the (adaptive) APM provides a substantial refinement over existing price-adjustment processes in (quasi-)linear Fisher markets.

\subsection{Further Related Work}
Many different variants of the Fisher market have been considered in the literature, leading to a host of new equilibrium notions. Examples include CE for chores \cite{budish2011combinatorial,bogomolnaia2017competitive,chaudhury2024competitive},  CE for public goods allocation \cite{jalota2023fisher,jalota2020markets}, and pacing equilibrium in auction markets \cite{conitzer2022multiplicative,conitzer2018multiplicative,conitzer2019pacing}. Computing these equilibria amounts to finding solutions to different optimization problems, for which techniques from optimization, e.g., error bound \cite{chen2024computing,nan2024convergence,liu2025pdhcg}, duality \cite{cole2017convex,gao2020first}, and penalty method \cite{boodaghians2022polynomial}, play a central role in obtaining solutions efficiently. 

The optimization tools used in this paper basically lie in the field of first-order methods. Let us review them in order. We start with Nesterov's acceleration, the key tool to the development of our APM. In each iteration, Nesterov's acceleration method performs a gradient descent step with a carefully chosen stepsize and constructs a linear combination of two consecutive iterates. As the optimal first-order method for convex $L$-smooth optimization (see, e.g., \cite[Chapter 2]{nesterov2013introductory}), Nesterov's acceleration method improves the rate of the gradient descent method (GDM) from $\cO(1/k)$ to $\cO(1/k^2)$, where the convergence measure is the function value gap and $k$ denotes the iteration number. If the objective function is  $\sigma$-strongly convex, then it accelerates the linear rate of the GDM by a factor of $\sqrt{{\sigma}/{L}}$. Some other acceleration methods, e.g., the heavy-ball method \cite{nemirovskiy1984iterative} and FISTA \cite{beck2009fast}, achieve the same results; see \cite{dAspremont2021AccelerationMethods} for a survey. 

For all these acceleration methods, the smoothness of the objective function is a necessary condition. However, many real applications induce nonsmoothness in objective functions of the corresponding optimization problems. To deal with nonsmooth functions, a popular approach is to apply the smoothing technique, which involves deriving a smooth approximation of a nonsmooth function through appropriate regularization \cite{nesterov2005smooth,nesterov2007smoothing}. Such an approximation preserves the convexity of the original problem and hence facilitates the application of acceleration methods. It is noteworthy that \citet{chen2024computing} has recently used the smoothing technique to develop an unconstrained smooth approximation problem for computing CE for chores. Nevertheless, they failed to accelerate their algorithm due to the nonconvex nature of the chores setting. In comparison, our work shows that for (quasi-)linear Fisher markets, acceleration can be achieved for price-adjustment processes.

On another front, when it comes to the computation of exact CE, the above tools offer little help. A classical technique to compute an exact CE is the interior point algorithm rounding procedure \cite{ye2008path,mehrotra1993finding}. Assuming that the inputs are rational with bit-length bounded by $\cL$, this procedure rounds an $\epsilon$-KKT point ($\epsilon\leq2^{-\cO(\cL)}$) into an exact CE through a system of linear equalities and inequalities. Then, together with the modified primal-dual path-following algorithm, it outputs an exact CE in at most $\cO(\sqrt{mn}(m+n)^3\cL)$ arithmetic operations; see \cite[Theorem 3]{ye2008path}. However, such a procedure needs both $\epsilon$-CE prices and allocations to solve the required linear system, making it inapplicable to price-adjustment methods that only produce $\epsilon$-CE prices. In contrast, our recovery oracle, based on the local solvability of the nonlinear optimality conditions of the new formulation, only requires approximate CE prices as input. Consequently, it can be coupled with price-adjustment methods to yield finite-step convergence to CE prices. Furthermore, the oracle avoids large linear systems and enjoys a significantly lower complexity of $\cO((m+n)^2)$, consistent with the lightweight nature of price-adjustment methods.

\paragraph{Organization.} This paper is organized as follows. Sec. \ref{sec:CE} introduces the formal definition of CE. Sec. \ref{sec:formulation} proposes a unified box-constrained strongly convex formulation to compute CE for linear and quasi-linear utilities. In Sec. \ref{sec:epsilon-CE}, we develop APM and prove that it finds an $\epsilon$-CE in $\tilde{\cO}(1/\sqrt{\epsilon})$ iterations. In Sec. \ref{sec:exact-CE}, we present the recovery oracle and show how it can be used to recover an exact CE. By coupling the recovery oracle with APM, we further develop the adaptive APM that is guaranteed to find an exact CE in finite steps.  In Sec. \ref{sec:experiments}, we report numerical results to show the superior performance of our algorithms. Finally, we give some closing remarks in Sec. \ref{sec:conclusion}. 

\paragraph{Notation.} The notation used in this paper is mostly standard. We use $[m]$ to denote the set $\{1,\ldots,m\}$ for any positive integer $m$. For an index set $J\subseteq [m]$, we use $|J|$ to denote its cardinality. We use $\e_j$, $j\in[m]$ to denote the standard basis vectors in $\R^m$ and let $\e_0$ be the zero vector of $\R^m$ for the sake of consistency.
Let $\cD_m$ denote the $m$-dimensional simplex, i.e., $\cD_m=\{y\in\R^m_+:\sum_{j\in[m]}y_j=1\}$. For a vector $y\in\R^m$ and a closed convex set $\cC\in\R^m$, let $\Pi_{\cC}(y)$ to denote the projection of $y$ onto $\cC$. We adopt the convention that $\log 0=-\infty$, $\exp({-\infty})=0$, $[0]=\emptyset$, and $\max_{j\in\emptyset}\{y_j\}=-\infty$.

\section{Preliminary: Definition of CE}\label{sec:CE}
Consider a Fisher market with $m$ divisible items and $n$ buyers. Each buyer $i$ spends his budget  $B_i$ ($B_i>0$) purchasing a bundle of goods to maximize his utility $u_i$, which is a function of his allocation vector $x_i\in\R^m_+$. By setting the price vector $p \in \mathbb{R}_{+}^m$ appropriately, the market would strike a balance at the following equilibrium \cite{gao2020first,arrow1954existence}.
\begin{definition}[Competitive Equilibrium in Fisher Market]\label{def:ME} We say that a price vector $p^*\in\R^m_+$ and an allocation $x^*\in\R^{n\times m}_+$ satisfy competitive equilibrium (CE) if and only if
\begin{enumerate}[label={{\rm (E\arabic*).}},itemsep=0.5pt,leftmargin=*]
    \item $\sum_{j\in[m]} p_jx^*_{ij} \leq B_i$ for all $i \in[n]$; 
    \item $u_i(x^*_i)=\underset{x_i \in \R_{+}^{n}}{\operatorname{max}}\left\{u_{i}(x_i):\sum_{j\in[m]} p_jx_{ij} \leq B_i\right\}$ for all $i \in[n]$; 
    \item $\sum_{i\in[n]} x^*_{i j} \leq 1$ for all $j \in [m]$ and the equality must hold if $p_j^{*} > 0$.
\end{enumerate}
\end{definition}
Utility functions $u_i,i\in[n]$ play a crucial role in determining CE as they define the objective functions in the buyer's individual optimization problem (E2). The most simple example of $u_i$ is arguably linear utility, i.e., $u_i:x_i\mapsto \sum _{j\in[m]} v_{ij} x_{ij}$, only parameterized by a utility vector $v_i\in\R^m_+$. Another prevalent example is the quasi-linear utility $u_i:x_i\mapsto \sum _{j\in[m]} (v_{ij} - p_j) x_{ij}$, which represents the utility of the goods purchased minus the total payment.
With this utility, a buyer $i$ would prefer not to purchase any goods if the prices $p_j$ exceed his valuations $v_{ij}$ for all $j\in[m]$. 

Condition (E3) refers to market clearance. To ensure that all goods are sold out at CE, throughout the paper, we make the following assumption that is commonly adopted in Fisher markets; see, e.g., \cite[Assumption 1]{ye2008path} and \cite[Section 4]{gao2020first}. 
\begin{Assumption}
    Without loss of generality, we assume that the utility matrix $v \in \mathbb{R}^{n\times m}$ is nondegenerate, i.e., it does not allow a zero row or column.
    \label{ass:nondegenerate}
\end{Assumption}
Assumption \ref{ass:nondegenerate} ensures that the prices at CE, i.e., $p^*_j,j\in[m]$, are strictly positive (see, e.g., \cite[Sec. 2]{ye2008path} and \cite[Lemma 2]{gao2020first}), and hence $\sum_{i\in[n]} x^*_{i j} = 1$ for all $i\in[n]$ by (E3). Further, we have  upper and lower bounds on $p^*_j,j\in[m]$ by \cite[Lemma 2 and 12]{gao2020first}.
\begin{lemma}\label{le:bound}
For linear and quasi-linear utilities, the prices $p^*_j,j\in[m]$ at CE have upper and lower bounds $\bar{p}\coloneqq (1-\alpha^{-1})\|B\|_1+\max_{i\in[n],j\in[m]} \alpha^{-1} v_{i j}$ and $\underline{p}\coloneqq\min_{j\in[m]}\max_{i\in[n]} \frac{v_{i j} B_i}{\|v_i\|_1+\alpha^{-1}{B_i}}$, i.e.,
\[\underline{p}\leq p^*_j\leq\bar{p},\qquad\forall\ j\in[m],\]
where $\alpha=1$ (resp. $\alpha=+\infty$) corresponds to quasi-linear (resp. linear) utilities. 
\end{lemma}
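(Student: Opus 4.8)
The plan is to extract both inequalities directly from the defining conditions (E1)--(E3) of a CE together with Assumption~\ref{ass:nondegenerate}, handling the linear ($\alpha=+\infty$) and quasi-linear ($\alpha=1$) cases side by side. I would start by recording the standard fact that $p^*_j>0$ for every $j\in[m]$: if $p^*_j=0$, then any buyer $i$ with $v_{ij}>0$ (such a buyer exists because column $j$ of $v$ is nonzero) could add an arbitrarily large amount of the free good $j$ and strictly increase $u_i$, so no finite bundle is optimal for $i$, contradicting (E2); this is also \cite[Lemma~2]{gao2020first}. Consequently the clearing condition in (E3) holds with equality, $\sum_{i\in[n]}x^*_{ij}=1$, which gives $x^*_{ij}\le 1$ for all $i,j$ and the identity $\sum_{j\in[m]}p^*_j=\sum_{i\in[n]}\sum_{j\in[m]}p^*_jx^*_{ij}$, i.e.\ the total price of the goods equals the total spending.

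For the upper bound I would argue by cases. In the linear case, a buyer with a nonzero valuation vector never leaves budget unspent, since shifting any leftover money onto a good with positive value strictly increases $\sum_j v_{ij}x_{ij}$; hence (E1) is tight and the identity above yields $\sum_j p^*_j=\sum_i B_i=\|B\|_1$, so $p^*_j\le\|B\|_1=\bar p$. In the quasi-linear case, a buyer never purchases a positive amount of a good $j$ with $v_{ij}<p^*_j$, because zeroing out $x_{ij}$ deletes the negative term $(v_{ij}-p^*_j)x_{ij}$ from $u_i$ and relaxes the budget constraint; since good $j$ is cleared, some buyer $i$ has $x^*_{ij}>0$, whence $p^*_j\le v_{ij}\le\max_{i,j}v_{ij}=\bar p$. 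These two regimes are exactly what the unified expression $\bar p=(1-\alpha^{-1})\|B\|_1+\alpha^{-1}\max_{i,j}v_{ij}$ records.

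For the lower bound, fix $j\in[m]$ and an arbitrary buyer $i\in[n]$. Since $p^*_j>0$, the bundle $x'_i$ that spends the whole budget $B_i$ on good $j$ alone (so $x'_{ij}=B_i/p^*_j$, all other coordinates $0$) is feasible in (E2); its utility equals $v_{ij}B_i/p^*_j$ for linear utilities and $v_{ij}B_i/p^*_j-B_i$ for quasi-linear utilities (the total payment is $B_i$), so in both cases $u_i(x^*_i)\ge v_{ij}B_i/p^*_j-\alpha^{-1}B_i$. In the other direction, $u_i(x^*_i)\le\sum_{k\in[m]}v_{ik}x^*_{ik}\le\|v_i\|_1$ using $p^*\ge 0$ and $x^*_{ik}\le 1$. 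Chaining the two bounds and rearranging (valid since $p^*_j>0$) gives
\[
p^*_j\ \ge\ \frac{v_{ij}B_i}{\|v_i\|_1+\alpha^{-1}B_i}.
\]
As $i$ and $j$ were arbitrary, taking the maximum over $i$ and then the minimum over $j$ yields $p^*_j\ge\underline p$; positivity of $\underline p$ follows because every column of $v$ is nonzero under Assumption~\ref{ass:nondegenerate}.

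The main obstacle is getting the quasi-linear bookkeeping right. The precise form $u_i(x_i)=\sum_j(v_{ij}-p_j)x_{ij}$ is what makes the no-overpay claim $x^*_{ij}>0\Rightarrow v_{ij}\ge p^*_j$ go through (driving the upper bound), and it is the payment of $B_i$ incurred by the full-budget deviation that becomes the $+\alpha^{-1}B_i$ in the denominator of the lower bound, with $\alpha^{-1}=0$ recovering the linear estimate; conflating the two settings, or dropping the payment term, would give the wrong constants. Everything else---the revenue identity, the crude bound $u_i(x^*_i)\le\|v_i\|_1$, and the quantifier juggling (fix the good, maximize over buyers, minimize over goods)---is routine once $p^*_j>0$ is in hand, which is why I would establish that positivity at the very start.
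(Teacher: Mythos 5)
Your proof is correct. Note that the paper does not prove Lemma~\ref{le:bound} at all: it simply imports the bounds from \cite[Lemmas 2 and 12]{gao2020first}, so there is no in-paper argument to compare against line by line. What you supply is a self-contained derivation working purely from the primal equilibrium conditions (E1)--(E3): positivity of prices forces market clearance with equality, the upper bound comes from the revenue identity $\sum_j p^*_j=\|B\|_1$ in the linear case and from the no-overpay property $x^*_{ij}>0\Rightarrow v_{ij}\ge p^*_j$ in the quasi-linear case, and the lower bound comes from comparing the equilibrium bundle against the single-good full-budget deviation together with the crude estimate $u_i(x^*_i)\le\|v_i\|_1$ (which uses $x^*_{ik}\le 1$). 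All the steps check out, including the rearrangement that requires $p^*_j>0$ and $\|v_i\|_1+\alpha^{-1}B_i>0$ (guaranteed by Assumption~\ref{ass:nondegenerate}), and the two cases do collapse to the stated unified constants with $\alpha^{-1}\in\{0,1\}$. The cited reference obtains these bounds through the Eisenberg--Gale duality framework, so your equilibrium-side argument is arguably more elementary and has the pedagogical benefit of exposing exactly which CE condition drives each constant; the cost is that you must separately justify attainment of the buyer optimum and the tightness of the budget constraint, which you do.
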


\section{Unified Strongly Convex Formulation}\label{sec:formulation}
In this section, we formulate the task of computing CE into a structured strongly convex minimization problem. To begin, we consider the dual EG programs with widely studied linear and quasi-linear utilities \cite{cole2017convex,conitzer2022pacing,gao2020first}, whose optimal solution $p^*$ is nothing but CE price vector.
\begin{flalign*}
	\begin{array}{cl} 
		\quad\textit{Linear:  } &  \\
		\min \limits_{p\in\R^m_+}&\sum \limits_{j \in [m]} p_j-\sum \limits_{i \in [n]} B_i \log \left(\beta_i\right)\\ \text { subject to } 
		& p_j \geq v_{i j} \beta_i,\ \forall\ i \in [n], j \in [m];\\
		&\empty\\
	\end{array} \Bigg| &\quad
\begin{array}{cl}
	\textit{Quasi-linear:}& \\
	\min \limits_{p\in\R^m_+}&\sum \limits_{j \in [m]} p_j-\sum \limits_{i \in [n]} B_i \log \left(\beta_i\right)\\ \text { subject to } 
	& p_j \geq v_{i j} \beta_i,\ \forall\ i \in [n], j \in [m]\\
	&\beta_{i} \leq 1,\quad\quad \forall\ i \in [n].
\end{array}      
\end{flalign*}
To investigate the above two optimization problems in a unified manner, we introduce a parameter $\alpha\in\{1,+\infty\}$ and consider the following formulation:
\begin{equation}
\begin{array}{cl}
\min \limits_{p\in\R^m_+}&\sum \limits_{j \in [m]} p_j-\sum \limits_{i \in [n]} B_i \log \left(\beta_i\right)\\ \text { subject to } 
& p_j \geq v_{i j} \beta_i,\quad \forall\ i \in [n], j \in [m]\\
&\beta_{i} \leq \alpha,\qquad\ \forall\ i \in [n],
\end{array}   
\label{eq:Unify1}
\end{equation}
where $\alpha=+\infty$ (resp. $\alpha=1$) corresponds to linear (resp. quasi-linear) Fisher market setting.

Observe that the objective function of Problem \eqref{eq:Unify1} is monotonically decreasing with respect to $\beta_i,i\in[n]$. We can omit all the constraints and obtain a problem that merely concerns $p$.
\begin{equation}
    \min\limits_{p\in\R^m_+}\quad  f(p)\coloneqq\sum \limits_{j\in [m]} p_j-\sum \limits_{i \in [n]} B_i \log \left(\min \left\{\alpha,\frac{p_j}{v_{ij}},j\in[m] \right\}\right).
    \label{eq:unified-Dual}
\end{equation}
By substituting $p_j$ with $\exp({\mu_j})$ for $j\in[m]$ and introducing the notation $v_{i0}=1$ and $\mu_0=\log(\alpha)$, we further simplify the above problem as 
\begin{equation}
        \min \limits_{\mu \in \mathbb{R}^{m}}\quad F(\mu)\coloneqq\sum \limits_{j \in [m]} \exp(\mu_{j})+\sum \limits_{i \in [n]} B_i \max_{j\in\{0\}\cup[m]} \left\{\log(v_{ij})-\mu_{j}\right\}.
        \label{eq:QL-exp}
        \tag{$\sP$}
\end{equation}
With the exponential functions $\exp(\mu_j),j\in[m]$ incorporated, the objective function $F$ is strongly convex on $\R^m$. However, it lacks a global strong convexity modulus. To address this issue, we impose box constraints on $\mu_j,j\in[m]$. Specifically, let $\mu^*$ denote the optimal solution of \eqref{eq:QL-exp} and use the relations $p_j=\exp(\mu_j),j\in[m]$. By Lemma \ref{le:bound}, we see that
\begin{equation}\label{eq:mu}
	\underline{\mu}\coloneqq \log(\underline{p}) \leq\mu^*_j\leq\bar{\mu}\coloneqq \log(\bar{p}),\quad\ \forall\ j\in[m].
\end{equation}
Therefore, we can constrain the variables $\mu_j,j\in[m]$ in the box $[\underline{\mu},\bar{\mu}]$ without changing the optimal solution of \eqref{eq:QL-exp}, leading to the following box-constrained formulation:
\begin{equation}
\begin{array}{cl}
    \min \limits_{\mu \in \mathbb{R}^{m}} &   \quad F(\mu)=\sum \limits_{j \in [m]} \exp(\mu_{j})+\sum \limits_{i \in [n]} B_i \max\limits_{j\in\{0\}\cup[m]} \left\{\log(v_{ij})-\mu_{j}\right\} \\
    \text{ subject to } &\quad\underline{\mu}\leq \mu_j\leq\bar{\mu},\qquad\qquad\qquad\qquad\forall\ j\in[m].
\end{array}
\label{eq:box}
\tag{${\sP}^*$}
\end{equation}
Clearly, in the feasible region of \eqref{eq:box}, the objective function $F$ is strongly convex with modulus $\sigma_0= \exp({\underline{\mu}})$, and its smooth part, i.e., the exponential sum $\sum_{j \in [m]} \exp(\mu_{j})$, has a Lipschitz continuous gradient with modulus $L_0=\exp({\bar{\mu}})$. Moreover, the box constraints allow a simple closed-form projection, where the $j$-th element of the projection of $\mu\in\R^m$ is given by
$\Pi_{[\underline{\mu},\bar{\mu}]}(\mu_j)=\max\{\underline{\mu},\min\left\{\mu_j,\bar{\mu}\right\}\}$. These properties will prove crucial for accelerating price adjustment.
\section{Accelerated Price Adjustment} \label{sec:epsilon-CE}
To tackle the formulation \eqref{eq:box}, we first smooth the $\max$ terms in the objective function $F$ via the entropy regularization. Specifically, we consider an alternative form of $F$, i.e.,
\begin{equation}\label{eq:F_max_form}
    F(\mu)=\sum_{j \in [m]} \exp(\mu_{j})+ \sum_{i\in [n]} B_{i} \max \limits_{\lambda_{i}\in \cD_{m+1}}\left\{\sum \limits_{j \in [m] \cup \{ 0\}} \lambda_{ij} \left(\log\left(v_{ij}\right)-\mu_{j}\right)\right\},
\end{equation}
and approximate it by
\begin{equation}\label{eq:F_delta_max}
\begin{aligned}
     F_{\delta}(\mu) &= \sum\limits_{j \in [m]} \exp(\mu_{j}) + \sum \limits_{i\in [n]} B_{i} \max \limits_{\lambda_{i}\in \cD_{m+1}} \left\{\sum \limits_{j \in  \{ 0\}\cup[m]} \lambda_{ij} \left(\log\left(v_{ij}\right)-\mu_{j}\right)-\delta \lambda_{ij}\log\left(\lambda_{ij}\right)\right\}.
\end{aligned}
\end{equation}
Here, $\delta>0$ is the approximation parameter and the optimal value of the inner maximization problems can be computed by plugging in the optimal solution $\lambda^*_{ij}=\exp({\frac{\log(v_{ij})-\mu_j}{\delta}})/\sum_{j\in\{0\}\cup[m]}\exp({\frac{\log(v_{ij})-\mu_j}{\delta}})$.
Then, by replacing $F$ with $F_{\delta}$ in \eqref{eq:box} and relaxing the box constraints $\mu_j\in[\underline{\mu},\bar{\mu}]$ to $\mu_j\in[\underline{\mu}-\eta,\bar{\mu}+\eta]$, $j\in[m]$ for some $\eta\geq 0$, we obtain the following approximation problem for \eqref{eq:QL-exp}:
\begin{equation}
    \begin{aligned}
     \min \limits_{\mu \in \mathbb{R}^{m}} \quad & F_{\delta}(\mu) =  \sum \limits_{j\in [m]}\exp(\mu_j)+\delta\sum \limits_{i\in [n]}B_{i} \log\left(\sum \limits_{j\in \{0\}\cup[m]}\exp\left({\frac{\log\left(v_{ij}\right)-\mu_j}{\delta}}\right)\right)
     \\
    \text{ subject to } &\quad\underline{\mu}-\eta\leq \mu_j\leq\bar{\mu}+\eta,\qquad\qquad\qquad\qquad\forall\ j\in[m].
    \end{aligned}
    \label{eq:box-smooth}
    \tag{${\sP}_{\delta}$}
\end{equation}
We have the following properties for the approximation problem \eqref{eq:box-smooth}.
\begin{Fact}[Properties of Problem \eqref{eq:box-smooth}]\label{fact:f_delta}
    The following holds:
    \begin{enumerate}[{\rm (i)},itemsep=0.5pt,leftmargin=*]
        \item Smoothness: $F_{\delta}$ is $L$-smooth in the box  $[(\underline{\mu}-\eta)\1_m,(\bar\mu+\eta)\1_m]$, where $L = \exp({\bar{\mu}+\eta})+\|B\|_1/{\delta}$.
        \item Strong convexity: $F_{\delta}$ is $\sigma$-strongly convex on the box constrain set, where $\sigma = \exp({\underline{\mu}-\eta})$.
        \item Approximation error: $F\leq F_{\delta}\leq F+\delta\log(m+1) \|B\|_1.$
        \item Subgradient approximation: $\lim_{\delta\to0}\nabla F_{\delta}(\mu)\in\partial F(\mu)$ for all $\mu\in\R^m$.
    \end{enumerate}
\end{Fact}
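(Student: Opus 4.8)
The plan is to split the objective as $F_\delta = g + h_\delta$, where $g(\mu)=\sum_{j\in[m]}\exp(\mu_j)$ and $h_\delta(\mu)=\delta\sum_{i\in[n]}B_i\psi_i(\mu)$ with $\psi_i(\mu)=\log\sum_{j\in\{0\}\cup[m]}\exp\!\big((\log v_{ij}-\mu_j)/\delta\big)$ the log-sum-exp surrogate of $\phi_i(\mu):=\max_{j\in\{0\}\cup[m]}\{\log v_{ij}-\mu_j\}$; here $v_{i0}=1$ and $\mu_0=\log\alpha$ are fixed, so $\psi_i,\phi_i$ are functions of $\mu\in\R^m$ only, and both $g$ and $h_\delta$ are $C^\infty$. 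Items (i)--(ii) will follow from uniform two-sided Hessian bounds on the box, (iii) from the elementary estimate relating $\max$ and soft-max, and (iv) from the convergence of the soft-max weights to a probability vector supported on the active set of $\phi_i$.

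For (i) and (ii) I would compute Hessians. One has $\nabla^2 g(\mu)=\Diag\!\big(\exp(\mu_1),\dots,\exp(\mu_m)\big)$, so on the feasible box $[\underline\mu-\eta,\bar\mu+\eta]^m$ we get $\exp(\underline\mu-\eta)\bI\preceq\nabla^2 g(\mu)\preceq\exp(\bar\mu+\eta)\bI$. For $h_\delta$, the chain rule (noting only the coordinates in $[m]$ vary) gives $\nabla^2\psi_i(\mu)=\tfrac1{\delta^2}\big(\Diag(s_i)-s_is_i^{\top}\big)\big|_{[m]\times[m]}$, where $s_i=s_i(\mu;\delta)\in\cD_{m+1}$ has entries $s_{ij}\propto\exp\!\big((\log v_{ij}-\mu_j)/\delta\big)$. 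Since $\Diag(s_i)-s_is_i^{\top}$ is PSD and $\preceq\Diag(s_i)\preceq\bI$, every principal submatrix inherits $0\preceq\cdot\preceq\bI$, whence $0\preceq\nabla^2 h_\delta(\mu)=\tfrac1\delta\sum_i B_i\big(\Diag(s_i)-s_is_i^{\top}\big)\big|_{[m]\times[m]}\preceq\tfrac{\|B\|_1}{\delta}\bI$. Adding the two bounds yields $\exp(\underline\mu-\eta)\bI\preceq\nabla^2 F_\delta(\mu)\preceq\big(\exp(\bar\mu+\eta)+\|B\|_1/\delta\big)\bI$ throughout the box, which is precisely (i) with $L=\exp(\bar\mu+\eta)+\|B\|_1/\delta$ and (ii) with $\sigma=\exp(\underline\mu-\eta)$, strong convexity and smoothness on a convex set following from these Hessian bounds.

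For (iii) the key estimate is that for any finite list $a_0,\dots,a_m\in\R$, $\max_k a_k\le\delta\log\sum_k\exp(a_k/\delta)\le\max_k a_k+\delta\log(m+1)$ --- the lower bound by retaining the largest summand, the upper bound by dominating each summand by $\exp(\max_k a_k/\delta)$. Taking $a_j=\log v_{ij}-\mu_j$ gives $\phi_i(\mu)\le\delta\psi_i(\mu)\le\phi_i(\mu)+\delta\log(m+1)$; multiplying by $B_i\ge0$, summing over $i$, and adding $g(\mu)$ to every side gives $F\le F_\delta\le F+\delta\log(m+1)\|B\|_1$. (Equivalently, this is the bound $0\le-\sum_j\lambda_{ij}\log\lambda_{ij}\le\log(m+1)$ on $\cD_{m+1}$ applied to the Fenchel form \eqref{eq:F_delta_max}.)

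For (iv), differentiating gives $\nabla F_\delta(\mu)_j=\exp(\mu_j)-\sum_{i\in[n]}B_i\,s_{ij}(\mu;\delta)$ for $j\in[m]$. Fix $\mu$ and let $T_i:=\argmax_{k\in\{0\}\cup[m]}\{\log v_{ik}-\mu_k\}$. Factoring $\exp\!\big(\max_k(\log v_{ik}-\mu_k)/\delta\big)$ out of numerator and denominator of $s_i(\mu;\delta)$ shows $s_{ij}(\mu;\delta)\to\lambda^\circ_{ij}$ as $\delta\downarrow0$, where $\lambda^\circ_i\in\cD_{m+1}$ equals $\tfrac1{|T_i|}$ on $T_i$ and $0$ elsewhere; in particular the limit exists and $\operatorname{supp}(\lambda^\circ_i)\subseteq T_i$. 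On the other hand, the sum rule together with the subdifferential of a max (Danskin) gives $\partial F(\mu)=\big\{(\exp(\mu_j))_{j\in[m]}-\sum_i B_i\sum_{j\in[m]}\lambda_{ij}\e_j:\lambda_i\in\cD_{m+1},\ \operatorname{supp}(\lambda_i)\subseteq T_i\big\}$, using $\partial\phi_i(\mu)=\conv\{-\e_k:k\in T_i\}$ and the convention $\e_0=\bz$. Since $\lambda^\circ_i$ is one such admissible vector, $\lim_{\delta\downarrow0}\nabla F_\delta(\mu)=(\exp(\mu_j))_{j\in[m]}-\sum_i B_i\sum_{j\in[m]}\lambda^\circ_{ij}\e_j\in\partial F(\mu)$. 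All four steps are elementary; the one demanding the most care is this last one --- verifying that the soft-max weights have a genuine limit (in particular handling ties in the $\argmax$) and matching that limit against the correct description of $\partial F(\mu)$, including the bookkeeping that the index-$0$ piece contributes $\e_0=\bz$ and is thus invisible in the $\R^m$-valued (sub)gradient.
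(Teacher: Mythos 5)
Your proof is correct and follows essentially the same route as the paper's: two-sided Hessian bounds (the exponential sum contributing $[\exp(\underline{\mu}-\eta),\exp(\bar{\mu}+\eta)]$ and the log-sum-exp terms contributing $[0,\|B\|_1/\delta]$) give (i)--(ii), the standard soft-max/entropy estimate gives (iii), and the convergence of the soft-max weights to the uniform distribution on the active set $\cJ_i(\mu)$ gives exactly the limit point $d\in\partial F(\mu)$ that the paper identifies for (iv). The only difference is that you derive the log-sum-exp Hessian bound explicitly where the paper cites the standard result, which does not change the argument.
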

Now, we are ready to present our accelerated price-adjustment method (APM), which consists of a gradient step and an acceleration step. The gradient step computes the gradient of the approximation function $F_{\delta}$ and performs the projection onto the box constraints, where the computation cost is $\cO(mn)$. The acceleration step computes a simple linear combination at a cost of $\cO(m)$. Therefore, the total iteration cost of APM is $\cO(mn)$, identical to that of t\^atonnement methods. Furthermore, by selecting an appropriate relaxation radius $\eta$, APM includes a practical stopping criterion to identify an $\epsilon$-CE price. 

\begin{empheq}[box=\fbox]{align*}
   & \textbf{Accelerated Price-adjustment Method (APM)}\quad\\
    \textbf{Parameters:}\qquad &\quad  \epsilon\in(0,\exp({\underline{\mu}-1})];\quad \delta={\epsilon}/{(2\log(m+1)\|B\|_1)};\quad \eta=1;\\
    &\quad L=\exp({\bar{\mu}+\eta})+{\|B\|_1}/{\delta};\quad \sigma=\exp({\underline{\mu}-\eta});\quad q={\sigma}/{L}.\\
  \textbf{Initialization:}\qquad   &\quad  \mu^0\in\left[\underline{\mu}\1_m,\bar{\mu}\1_m\right];\quad y^0=\mu^0; \\
\textbf{Stopping Criterion:}\qquad &\quad \left\|\nabla F_{\delta}(\mu^t)\right\|_2\leq\min\left\{\sigma\epsilon,\sqrt{\sigma\epsilon} \right\};\\
\textbf{Gradient Step:}\qquad &\quad \mu^{t+1}_j =\max\left\{\underline{\mu}-\eta,\min\left\{y^t_j-\frac1L\nabla_j F_{\delta}(y^t),\bar{\mu}+\eta\right\}\right\},\ \forall\ j\in[m];\\
\textbf{Acceleration Step:}\qquad &\quad y^{t+1} = \mu^{t+1} + \frac{1-\sqrt{q}}{1+\sqrt{q}} \left(\mu^{t+1} - \mu^t\right). 
\end{empheq}

APM has an economic interpretation: It mimics a market that predicts future prices and excess supply to adjust prices. Specifically, future prices $\exp({y^t_j})$ are estimated as current prices $\exp({\mu^t_j})$ multiplied by a price momentum $\exp(\frac{1-\sqrt{q}}{1+\sqrt{q}} (\mu^{t}_j - \mu^{t-1}_j))$. Subsequently, with the smooth function $\nabla F_{\delta}$ approximating the excess supply, the future excess supply is predicted by $\nabla F_{\delta}(y^t)$. Then, instead of reacting to the current excess supply $v\in\partial F(\mu^t)$, the market uses the predicted future excess supply $\nabla F_{\delta}(y^t)$ to adjust the prices. As we shall show, through such a price-adjustment process, the prices will converge to approximate CE prices at a faster rate.

To derive the convergence rate for APM, we first define the approximate CE measure by the function value gap $F(\mu)-\inf F$, which equals zero if and only if $\mu_j=\mu^*_j=\log(p^*_j),j\in[m]$. We note that this approximate CE measure is stronger than the distance square measure in \cite[Theorem 1 and Corollary 1]{chaudhury2024competitive}, as $\|p-p^*\|_2^2\leq\cO(\epsilon)$ can be implied by $f(p)-f(p^*)=F(\mu)-F(\mu^*)\leq \cO(\epsilon)$ due to the quadratic growth of $f$ (see \citet[Lemma 4]{nan2024convergence}).
\begin{definition}[Approximate CE Prices]\label{def:approximate}
   We say that $p\in\R^m$ is an $\epsilon$-CE price vector if for $\mu_j=\log(p_j),j\in[m]$, one has $F(\mu)-\inf F\leq \epsilon$.\end{definition}
Now, we are prepared to establish the convergence rate for APM.
\begin{theorem}\label{th:SAG}
     APM finds an $\epsilon$-CE price vector in at most $\tilde{\cO}\left(\frac{1}{\sqrt{\epsilon}}\right)$ iterations.
\end{theorem}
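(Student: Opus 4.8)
The plan is to read APM as Nesterov's accelerated projected-gradient method (constant-momentum, strongly convex variant) applied to the smooth surrogate \eqref{eq:box-smooth}, whose smoothness and strong-convexity moduli $L,\sigma$ are exactly the ones entering $q=\sigma/L$ by Fact~\ref{fact:f_delta}(i)--(ii). From there I would show that the geometric decay of the function-value gap of $F_\delta$ both certifies an $\epsilon$-CE price through the stopping test and forces that test after $\tilde{\cO}(1/\sqrt\epsilon)$ iterations.

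First I would check soundness of the stopping test: if $\|\nabla F_\delta(\mu^t)\|_2\le\sqrt{\sigma\epsilon}$, then $p=\exp(\mu^t)$ is $\epsilon$-CE. Let $\mu_\delta^*$ minimize $F_\delta$ over the relaxed box. Since $\mu^t$ and $\mu_\delta^*$ lie in that box, where $F_\delta$ is $\sigma$-strongly convex (Fact~\ref{fact:f_delta}(ii)), the one-line bound $F_\delta(\mu^t)-F_\delta(\mu_\delta^*)\le\frac{1}{2\sigma}\|\nabla F_\delta(\mu^t)\|_2^2\le\frac{\epsilon}{2}$ holds (strong convexity plus Cauchy--Schwarz). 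Moreover $\mu^*$, the minimizer of $F$, lies in $[\underline\mu,\bar\mu]$ by \eqref{eq:mu} and is thus feasible for \eqref{eq:box-smooth}; so with $\delta=\epsilon/(2\log(m+1)\|B\|_1)$ and the sandwich of Fact~\ref{fact:f_delta}(iii), $F_\delta(\mu_\delta^*)\le F_\delta(\mu^*)\le F(\mu^*)+\frac{\epsilon}{2}=\inf F+\frac{\epsilon}{2}$. Adding the two estimates and using $F(\mu^t)\le F_\delta(\mu^t)$ gives $F(\mu^t)-\inf F\le\epsilon$, i.e. Definition~\ref{def:approximate}. Only the $\sqrt{\sigma\epsilon}$ branch of the test is used here; the $\sigma\epsilon$ branch is the (stronger) trigger needed for the exact-recovery results of later sections.

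Next I would bound the iteration count. The convergence theory of the accelerated projected-gradient method for $L$-smooth $\sigma$-strongly convex objectives gives $F_\delta(\mu^k)-F_\delta(\mu_\delta^*)\le(1-\sqrt q)^k D_0$, where $D_0=\cO\big(F_\delta(\mu^0)-F_\delta(\mu_\delta^*)+\sigma\|\mu^0-\mu_\delta^*\|_2^2\big)$ is bounded by a polynomial in the problem data (using $\mu^0\in[\underline\mu,\bar\mu]^m$ and coercivity of $F$). To link this with the gradient-norm test I would argue that the relaxation radius $\eta=1$ is large enough that $\mu_\delta^*$ is interior to the relaxed box, hence $\nabla F_\delta(\mu_\delta^*)=0$; then $L$-smoothness and strong convexity give $\|\nabla F_\delta(\mu^k)\|_2\le L\|\mu^k-\mu_\delta^*\|_2\le L\sqrt{2\big(F_\delta(\mu^k)-F_\delta(\mu_\delta^*)\big)/\sigma}$. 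Thus the test fires once $F_\delta(\mu^k)-F_\delta(\mu_\delta^*)\le\frac{\sigma}{2L^2}\min\{\sigma\epsilon,\sqrt{\sigma\epsilon}\}^2$, a threshold of polynomial order in $\epsilon$ since $\sigma=\Theta(1)$ while $L=\exp(\bar\mu+1)+\|B\|_1/\delta=\Theta(1/\epsilon)$. Solving $(1-\sqrt q)^k D_0\le$ threshold yields $k=\cO\big(\frac{1}{\sqrt q}\log\frac{D_0}{\text{threshold}}\big)$; since $q=\sigma/L=\Theta(\epsilon)$ and the logarithm is $\cO(\log\frac1\epsilon+\log(\text{data}))$, this is $\tilde{\cO}(1/\sqrt\epsilon)$.

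The step I expect to be the crux is the interiority of $\mu_\delta^*$, i.e. that $\eta=1$ suffices --- absent it, $\nabla F_\delta$ need not vanish at $\mu_\delta^*$ and the gradient-norm test could never trigger. I would establish it by combining the a priori price bounds of Lemma~\ref{le:bound} with the uniform approximation error of Fact~\ref{fact:f_delta}(iii): from $F(\mu_\delta^*)\le F_\delta(\mu_\delta^*)\le F_\delta(\mu^*)\le\inf F+\frac{\epsilon}{2}$ and a quadratic-growth estimate for $F$ on $[\underline\mu,\bar\mu]^m$ (in the spirit of \cite[Lemma 4]{nan2024convergence}), one gets $\|\mu_\delta^*-\mu^*\|_\infty=\cO(\sqrt\epsilon)$, which is $<\eta=1$ for $\epsilon$ in the admissible range; since $\mu^*\in[\underline\mu,\bar\mu]$, the perturbed point $\mu_\delta^*$ stays strictly inside $[\underline\mu-\eta,\bar\mu+\eta]$. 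The remaining ingredients --- confirming APM is literally the projected accelerated scheme, the strong-convexity inequality, and substituting the closed-form constants --- are routine.
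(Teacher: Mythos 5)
Your proposal is correct and follows essentially the same route as the paper: the stopping test is certified via the strong-convexity (PL-type) inequality plus the sandwich $F\le F_\delta\le F+\delta\log(m+1)\|B\|_1$, the iteration count comes from Nesterov's linear rate with $q=\sigma/L=\Theta(\epsilon)$, and the crux you identify is exactly the paper's Lemma \ref{le:stop2}, which establishes that $\nabla F_\delta$ vanishes at the minimizer over the relaxed box. The only cosmetic difference is that the paper proves this by showing (via a contradiction argument) that the \emph{unconstrained} minimizer $\hat\mu^{\delta}$ of $F_\delta$ lands inside the closed relaxed box --- which makes $\nabla F_\delta(\hat\mu^{\delta})=0$ automatic even if it sits on the boundary --- whereas your strict-interiority phrasing for the constrained minimizer is borderline at $\epsilon=\exp(\underline{\mu}-1)$; this is trivially repaired.
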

To the best of our knowledge, APM is the first price-adjustment process to achieve an iteration complexity of $\tilde{\cO}({1}/{\sqrt{\epsilon}})$ in (quasi-)linear Fisher markets, representing a significant improvement over t\^atonnement methods. We thereby address (Q1). Furthermore, we show that the prices produced by  APM can be utilized to compute an approximate CE allocation. 
\begin{proposition}\label{pro:appriximate}
    Consider the output $\mu$ of APM that satisfies the stopping criterion with $\epsilon\leq \log(m+1)\|B\|_1$. The corresponding prices $p_j=\exp(\mu_j), j\in[m]$ and allocation $x\in\R^{n\times m}_+$ given by
    $x_{ij}=\frac{B_i}{p_j}\cdot\frac{\exp({\frac{\log(v_{ij})-\mu_j}{\delta}})}{\sum_{j\in\{0\}\cup[m]}\exp({\frac{\log(v_{ij})-\mu_j}{\delta}})}$, $ i\in[n], j\in[m],$
     satisfy the following: 
    \begin{enumerate}[label={{\rm (B\arabic*).}},itemsep=0.5pt,leftmargin=*]
    \item $\sum_{j\in[m]}p_jx_{ij} \leq  B_i$ for all $i\in[n]$;
    \item $u_i\left(x_i\right)+\alpha^{-1}B_i \geq \left(1-\frac{2\epsilon}{\|B\|_1}\right)\underset{x^{\prime}_i \in \R_{+}^{n}} \max \left\{u_{i}(x^{\prime}_i)+\alpha^{-1}B_i:\sum_{j\in[m]} p_jx^{\prime}_{ij} \leq  B_i \right\}$ for all $i\in[n]$;
    \item $ |\sum_{i\in[n]}x_{ij}-1|\leq \epsilon$ for all $j\in[m]$.
\end{enumerate}
\end{proposition}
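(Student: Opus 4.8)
The plan is to exploit the explicit formula for $x_{ij}$, which is precisely the softmax (entropic-smoothed) maximizer $\lambda^*_{ij}$ rescaled by $B_i/p_j$. First I would verify (B1): summing $p_jx_{ij}$ over $j\in[m]$ gives $B_i\sum_{j\in[m]}\lambda^*_{ij}\le B_i\sum_{j\in\{0\}\cup[m]}\lambda^*_{ij}=B_i$, since the $j=0$ term is nonnegative and the full sum is $1$ by $\lambda^*_i\in\cD_{m+1}$. This is immediate.

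For (B3), the key observation is that $\sum_{i\in[n]}x_{ij}-1$ is exactly a component of $-\nabla F_\delta(\mu)$: differentiating $F_\delta$ in \eqref{eq:box-smooth}, $\nabla_j F_\delta(\mu)=\exp(\mu_j)-\sum_{i\in[n]}B_i\lambda^*_{ij}=p_j\bigl(1-\sum_{i\in[n]}x_{ij}\bigr)$. Hence $|\sum_{i\in[n]}x_{ij}-1|=|\nabla_j F_\delta(\mu)|/p_j\le\|\nabla F_\delta(\mu)\|_2/p_j$. Since $\mu$ satisfies the stopping criterion, $\|\nabla F_\delta(\mu)\|_2\le\min\{\sigma\epsilon,\sqrt{\sigma\epsilon}\}\le\sigma\epsilon=\exp(\underline\mu-1)\epsilon$, while $p_j=\exp(\mu_j)\ge\exp(\underline\mu-\eta)=\exp(\underline\mu-1)$ because the gradient step keeps $\mu_j$ in $[\underline\mu-\eta,\bar\mu+\eta]$. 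Therefore $|\sum_{i\in[n]}x_{ij}-1|\le\epsilon$, giving (B3). (Here I use that the stopping criterion is stated in terms of $\sigma\epsilon$ with $\sigma=\exp(\underline\mu-\eta)$ and $\eta=1$; I should double-check that the definition of $\sigma$ in the APM box matches what I need, and invoke Fact \ref{fact:f_delta}(ii) if a cleaner bound on $p_j$ is wanted.)

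For (B2) — the hardest part — I would argue that $x_i$ is an approximate maximizer of the buyer's quasi-linear-type objective $u_i(x'_i)+\alpha^{-1}B_i$ over the budget set. The exact maximizer spends the whole budget on goods with the largest "bang-per-buck" ratio $v_{ij}/p_j$ (with the outside option $v_{i0}/\exp(\mu_0)=1/\alpha$ accounting for the $\alpha^{-1}B_i$ term), so the optimal value equals $B_i\max_{j\in\{0\}\cup[m]}\{v_{ij}/p_j\}=B_i\exp(\max_j\{\log v_{ij}-\mu_j\})$. The proposed $x_i$ instead spreads the budget according to the softmax weights $\lambda^*_{ij}$, so its value is $B_i\sum_{j\in\{0\}\cup[m]}\lambda^*_{ij}(v_{ij}/p_j)$ (treating the $j=0$ contribution as the leftover-budget term $\alpha^{-1}B_i\lambda^*_{i0}$). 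I would lower-bound this softmax average of $v_{ij}/p_j$ by the maximum times $(1-\text{(something)})$: writing $r_{ij}=\log v_{ij}-\mu_j$ and $r_i^{\max}=\max_j r_{ij}$, the gap between $\sum_j\lambda^*_{ij}e^{r_{ij}}$ and $e^{r_i^{\max}}$ is controlled by how concentrated $\lambda^*_i$ is, which the entropic smoothing quantifies: $\delta(\log(m+1)-H(\lambda^*_i))\le r_i^{\max}-\sum_j\lambda^*_{ij}r_{ij}$, combined with the approximation-error bound Fact \ref{fact:f_delta}(iii) and the relation $F(\mu)-\inf F\le\epsilon$ (which holds because the stopping criterion together with $\sigma$-strong convexity forces the function-value gap of $F_\delta$, hence of $F$, to be $O(\epsilon)$). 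Converting the additive gap in the exponent into a multiplicative factor $1-2\epsilon/\|B\|_1$ is the step requiring care: I expect to use $e^{-s}\ge1-s$, the choice $\delta=\epsilon/(2\log(m+1)\|B\|_1)$, and possibly a per-buyer accounting of the total excess $\sum_i B_i(\text{gap}_i)\le\delta\log(m+1)\|B\|_1+(\text{function-gap term})\le\epsilon+O(\epsilon)$. The main obstacle is being careful about whether the $(1-2\epsilon/\|B\|_1)$ factor is uniform over $i$ or only holds in aggregate; if only in aggregate I would need to tighten the stopping criterion's role (the $\sqrt{\sigma\epsilon}$ branch) or argue per-coordinate via the structure of $\lambda^*_i$. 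I would also keep track of the outside-option index $0$ throughout, since that is what links $u_i+\alpha^{-1}B_i$ to the unconstrained form of $F$.
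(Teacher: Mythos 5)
Your arguments for (B1) and (B3) coincide with the paper's: (B1) is the normalization of the softmax weights, and (B3) is exactly the identity $\nabla_j F_\delta(\mu)=p_j(1-\sum_i x_{ij})$ combined with the stopping criterion and $p_j\geq\sigma$. For (B2) you take a genuinely different route. The paper reduces (B2), via Lemma \ref{le:max}, to the per-buyer power-mean inequality $\sum_{j}a_j^{1/\delta+1}/\sum_{j}a_j^{1/\delta}\geq\max_j a_j\cdot(1-1.3\delta)/(m-1)^{\delta}$ (Lemma \ref{le:tech}, imported from the chores paper) and then shows $(1-1.3\delta)(m-1)^{-\delta}\geq 1-2\epsilon/\|B\|_1$ by elementary estimates. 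Your entropic route can be completed and is arguably cleaner: with $r_{ij}=\log v_{ij}-\mu_j$, Jensen's inequality gives $\sum_j\lambda^*_{ij}e^{r_{ij}}\geq\exp(\sum_j\lambda^*_{ij}r_{ij})$, and the log-sum-exp sandwich gives $\sum_j\lambda^*_{ij}r_{ij}=\delta\log\sum_je^{r_{ij}/\delta}-\delta H(\lambda^*_i)\geq r_i^{\max}-\delta\log(m+1)$, whence $u_i(x_i)+\alpha^{-1}B_i\geq(m+1)^{-\delta}\,B_i\max_j\{v_{ij}/p_j\}\geq(1-\epsilon/(2\|B\|_1))\,\bar v_i$ for every buyer $i$ — a better constant than the paper's.

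Two slips in your sketch should be fixed, though. First, the inequality you state, $\delta(\log(m+1)-H(\lambda^*_i))\leq r_i^{\max}-\sum_j\lambda^*_{ij}r_{ij}$, is in the wrong direction and is false (take $\lambda^*_i$ nearly concentrated on the argmax: the right side tends to $0$ while the left tends to $\delta\log(m+1)$); what you need is the upper bound $r_i^{\max}-\sum_j\lambda^*_{ij}r_{ij}\leq\delta H(\lambda^*_i)\leq\delta\log(m+1)$. Second, your worry about the factor holding only "in aggregate," and your appeal to $F(\mu)-\inf F\leq\epsilon$ and Fact \ref{fact:f_delta}(iii), are red herrings: the bound is a pointwise algebraic property of the softmax weights with parameter $\delta$, uniform over $i$ and independent of the stopping criterion, exactly as in the paper's treatment. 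The function-value gap plays no role in (B2); only the choice $\delta=\epsilon/(2\log(m+1)\|B\|_1)$ does.
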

Note that (B1) coincides with (A1), while (B2)\footnote{(B2) includes $\alpha^{-1}B_i$ to ensure that both sides remain nonnegative under quasi-linear utilities, where $\alpha=1$.} and (B3) are approximate versions of (A2) and (A3), respectively. We see that the computed allocation $x$ and price $p$ satisfy the CE conditions up to an error of $\cO(\epsilon)$. Therefore, APM yields not only approximate CE prices but also an approximate CE allocation. This additional advantage stems from our smoothing strategy and stopping criterion based on $\|\nabla F_{\delta}(\mu^t)\|$. In comparison, traditional t\^atonnement methods, due to their nonsmooth formulations (typically \eqref{eq:unified-Dual}), lack such a stopping criterion, making it difficult for them to compute an approximate CE allocation. 

\section{Recovery of Exact CE} \label{sec:exact-CE}
Though APM finds an approximate CE price at a fast rate, it shares the same drawback with existing price-adjustment methods, i.e., the lack of a convergence guarantee to exact CE prices. To address this gap, we explore the relationship between approximate and exact CE prices. It turns out that through the optimality conditions of our formulation \eqref{eq:QL-exp} at the optimal solution $\mu^*$, one can develop a  \textit{recovery oracle} $\cR$ to bridge them, as specified in the following theorem. 
\begin{theorem}[Recovery of Exact CE]\label{th:recovery}
There exists a constant $\Delta^*>0$ and an oracle $\cR$ that maps $\mu\in\R^m$ and $r\in\R$ to $\mu^*$ (denoted by $\cR(\mu,r)=\mu^*$) whenever $\mu\in\B(\mu^*,r)$ and $0<r<\Delta^*/4$. Furthermore, the oracle $\cR$ completes this mapping in at most $\cO((m+n)^2)$ arithmetic operations.
\end{theorem}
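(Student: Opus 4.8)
The plan is to exploit the special structure of the optimality conditions of $(\sP)$ at $\mu^*$. First I would characterize $\mu^*$ via the optimality conditions: $\mu^*$ is optimal iff $0 \in \nabla(\sum_j \exp(\mu^*_j)) + \partial(\sum_i B_i \max_{j}\{\log v_{ij}-\mu_j\})$. For each buyer $i$, the subdifferential of the max term is supported on the active set $A_i(\mu^*) = \{ j \in \{0\}\cup[m] : \log v_{ij}-\mu^*_j = \max_{k}\{\log v_{ik}-\mu^*_k\}\}$, i.e. the goods that are "maximum bang-per-buck" for buyer $i$. Writing the stationarity condition componentwise: for each $j \in [m]$, $\exp(\mu^*_j) = \sum_{i : j \in A_i(\mu^*)} B_i \lambda^*_{ij}$ for some $\lambda^*_i$ in the simplex $\cD_{m+1}$ supported on $A_i(\mu^*)$; and for $j=0$, $\sum_i B_i \lambda^*_{i0} = 0$ so $\lambda^*_{i0}=0$ whenever $\mu^*_0 = \log\alpha$ is not strictly dominant (this handles the quasi-linear case). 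The key observation is that once the active-set pattern $\mathcal A^* = (A_i(\mu^*))_{i\in[n]}$ is known, $\mu^*$ is the unique solution of a \emph{fixed, smooth system of equations} in the variables $\mu_j = \log p_j$ and $\lambda_{ij}$: the clearing equations $\exp(\mu_j) = \sum_i B_i\lambda_{ij}$ together with the active-set equalities $\log v_{ij} - \mu_j = \log v_{ik}-\mu_k$ for $j,k \in A_i$, i.e. $p_j/v_{ij} = p_k/v_{ik}$, and $\sum_{j\in A_i}\lambda_{ij}=1$. Crucially these active-set equalities are \emph{linear in $p$} ($v_{ik}p_j - v_{ij}p_k = 0$), and the clearing equations $p_j = \sum_i B_i \lambda_{ij}$ are also linear in $(p,\lambda)$; so once $\mathcal A^*$ is fixed we are solving a linear system of size $O(m+n)$ (after also knowing which $\lambda_{ij}>0$, but we can solve the least-structured version and check).

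**The recovery oracle.** Given input $\mu \in \B(\mu^*, r)$ with $r < \Delta^*/4$, the oracle first computes the active sets \emph{robustly}: it estimates each buyer's bang-per-buck ratios $\log v_{ij}-\mu_j$ and declares $j$ "active for $i$" if it is within $2r$ (or within $\Delta^*/2$) of the maximum. Here $\Delta^*$ is defined as the minimal \emph{gap} over all buyers between the best and second-best bang-per-buck ratio among \emph{inactive} goods at $\mu^*$ — formally $\Delta^* = \min_i \min_{k \notin A_i(\mu^*)}\big( \max_j\{\log v_{ij}-\mu^*_j\} - (\log v_{ik}-\mu^*_k)\big)$, with $\Delta^*=+\infty$ if every buyer is indifferent among all goods. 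Since $\|\mu-\mu^*\|_\infty \le r < \Delta^*/4$, a Lipschitz/triangle-inequality argument shows that the thresholding with radius $\Delta^*/2$ recovers exactly $A_i(\mu^*)$ for every $i$: the genuinely active goods stay within $2r < \Delta^*/2$ of the max, while every inactive good is at distance $> \Delta^* - 2r > \Delta^*/2$. Then the oracle assembles the linear system described above (clearing equations + active-set ratio equations + simplex normalization), which has $\mu^*$ (equivalently $p^* = \exp(\mu^*)$) and the corresponding $\lambda^*$ as a solution; after solving it, it outputs $\mu^* = \log p^*$. We should argue uniqueness of the solution: the objective $F$ is strongly convex in $\mu$ on the relevant box so $\mu^*$ is unique, and the linear system's solution set projected onto the $p$-coordinates is a singleton (any solution of the system is a KKT point of $(\sP)$, hence the minimizer). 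A mild technical point: the $\lambda$ part may not be unique if the system is degenerate, but we only need $p$, which is.

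**Complexity.** The active-set computation costs $O(mn)$ (each buyer, each good, one subtraction and a running max). The linear system has $O(m+n)$ unknowns (the $m$ price variables $p_j$, plus at most $\sum_i|A_i| \le mn$... ) — here I need to be a little careful, and the honest bound uses that the system, after eliminating $\lambda$, can be reduced, or that one only needs to solve a system whose size is governed by the number of distinct price variables $m$ and buyer constraints $n$, giving an $O((m+n)^3)$ Gaussian elimination. To hit the claimed $O((m+n)^2)$ I expect the paper uses additional structure — perhaps the "market graph" is a forest at equilibrium (generically each buyer is tight on few goods, and the bipartite spend graph of a vertex equilibrium has at most $m+n-1$ edges), so the linear system is triangular/sparse and can be solved by back-substitution in $O((m+n)^2)$ time. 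So the key structural lemma to prove is: \emph{the linear system built from $\mathcal A^*$ is square (or over-determined-but-consistent) with a sparsity pattern allowing $O((m+n)^2)$-time solution}, which follows from the combinatorial structure of a vertex CE.

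**The main obstacle.** The hard part will be \emph{justifying the jump from "$\mu$ near $\mu^*$" to "exact recovery"} rigorously, i.e. proving that the linear system assembled from the correctly-identified active sets has $\mu^*$ as its \emph{unique} $p$-solution and that no spurious solutions arise — equivalently, that the KKT system of $(\sP)$ restricted to the true active pattern is nondegenerate enough. This requires (a) showing $\Delta^* > 0$ (some buyer's active set could in principle include all goods, making the gap vacuous — handled by the $+\infty$ convention, but one must check the thresholding still works), and (b) controlling the case where the recovered $\lambda^*$ lands on the boundary of the simplex, i.e. some "active" good actually gets zero mass — then the clearing equation still holds and $p^*$ is still pinned down, but one must verify the solve doesn't fail. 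Establishing the $O((m+n)^2)$ bound (rather than $O((m+n)^3)$) is the other delicate point and hinges on the forest structure of the equilibrium spend graph, which I would prove via a rank/counting argument: a basic feasible equilibrium has at most $m+n-1$ positive $x_{ij}$, so the effective coupling graph is acyclic and the system solves by elimination along the tree in quadratic time.
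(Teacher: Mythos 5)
Your first step---identifying the active sets $\cJ_i(\mu^*)$ from a nearby $\mu$ by thresholding the bang-per-buck values at $2r$ below the maximum, with $\Delta^*$ defined as the smallest gap between the best and the best-inactive value---matches the paper's Lemma \ref{le:index} essentially verbatim, and your triangle-inequality justification is the right one. The gap is in the second step, and it is exactly the point you flag yourself but do not resolve. The paper never solves a joint linear system in $(p,\lambda)$. Instead it introduces the \emph{connection classes} of the active sets (Definition \ref{def:connection}): buyers are grouped by transitively overlapping active sets, and within one class the ratio equations $\log(v_{ij})-\mu_j=\log(v_{ij'})-\mu_{j'}$ propagate along a chain to determine every $\mu_j$, $j\in\tilde J_l^*$, as $\mu_{j_1}+a_j$ for explicitly computable constants $a_j$. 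The variables $\lambda_{ij}$ are then eliminated entirely by summing the clearing equations over all goods in the class, which telescopes to the single scalar equation $\sum_{j\in\tilde J_l^*}\exp(\mu_j)=\sum_{i\in I_l^*}B_i$ (or to $\mu_0=0$ in the quasi-linear case when $0\in\tilde J_l^*$); this pins down the one remaining free constant per class in closed form (Lemma \ref{le:recover}). So uniqueness of the $p$-part and the exact solvability are consequences of this aggregation, not of any nondegeneracy of the KKT system, and no $\lambda$ ever needs to be computed.

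Consequently your proposed route to the $\cO((m+n)^2)$ bound---a forest structure of the equilibrium spend graph enabling back-substitution---is both unproven and not what is needed. The system you assemble has up to $\sum_i|A_i|=\cO(mn)$ unknowns $\lambda_{ij}$ with no a priori sparsity pattern (the equilibrium $\lambda^*$ need not be basic, and even if a basic one exists you cannot identify its support in advance), so Gaussian elimination gives no better than $\cO((mn)^3)$ as stated. In the paper the $\cO((m+n)^2)$ cost is dominated by the connected-components computation over the active sets (Lemma \ref{le:class}), and the per-class closed-form solve costs only $\cO(mn)$ (Lemma \ref{le:solution}). A secondary inaccuracy: your justification that ``any solution of the system is a KKT point of $(\sP)$, hence the minimizer'' fails once you drop $\lambda_{ij}\ge 0$ from the linear system; the correct uniqueness argument is the per-class one above, which does not reference KKT multipliers at all.
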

Theorem \ref{th:recovery} is significant as it not only reveals that the exact CE prices can be recovered from its nearby points, i.e., $\epsilon$-CE prices, but also demonstrates the low computational cost of the recovery oracle $\cR$, which is comparable to a single t\^atonnement iteration. With the recovery oracle in hand,
one can refine APM or other price-adjustment methods by coupling them with $\cR$, thereby achieving an iterate convergence guarantee to CE, as demonstrated in Sec. \ref{subsec:adaptive}.

The construction of the recovery oracle $\cR$ is rooted in the local property of our formulation \eqref{eq:QL-exp} and the so-called \textit{connection class} (see Definition \ref{def:connection}) of the active index sets
\[\cJ_i(\mu)\coloneqq \argmax_{j\in\{0\}\cup[m]}\left\{\log(v_{ij})-\mu_j\right\},\quad\ i\in[n].\] 
The key observations are: (i) given all the connection classes of $\cJ_i(\mu^*)$, $i\in[n]$, the optimal solution $\mu^*$ can be exactly calculated via some basic mathematical operations; (ii) there exists a radius $\Delta^*>0$ such that $\cJ_i(\mu^*)$, $i\in[n]$, along with their connection classes, can be computed through an arbitrary point $\mu\in\R^m$ with $\|\mu-\mu^*\|_2<\Delta^*/4$. See Appendix \ref{subsec:recovery} for construction details.

For the radius $\Delta^*$, we remark that it only depends on the optimal solution $\mu^*$ and the utility parameters $v_{ij}$, which ultimately reduces to dependence only on the parameters $v_{ij}$, $B_i$. Indeed,  $\Delta^*$ is given by $\Delta^*\coloneqq\Delta(\mu^*)$, where the function $\Delta:\R^m\to\R$ is defined as follows:
\begin{equation} 
\Delta_{i}(\mu) \coloneqq \max_{j\in\{0\}\cup[m]}\{\log(v_{ij})-\mu_j\} - \max_{j\in\left(\{0\}\cup[m]\right)\setminus \cJ_i(\mu)}\{\log(v_{ij})-\mu_j\}; \quad \Delta(\mu)\coloneqq \min_{i\in[n]}\ \Delta_{i}(\mu).   \label{eq:def-delta}\end{equation}
The above definition yields a natural economic interpretation. Specifically, write  
$\mu^*_j=\log(p^*_j)$ and $\Delta_i(\mu^*)=\log(\frac{v_{ij_1}}{p^*_{j_1}}/\frac{v_{ij_2}}{p^*_{j_2}})$, where $j_1\in\cJ_i(\mu^*)$ and $j_2\in \argmax_{j\in(\{0\}\cup[m])\setminus\cJ_i(\mu^*)}\{\log(v_{ij}/p^*_j)\}$. 
We see that $\Delta_i(\mu^*)$ represents the logarithm of the ratio between the highest and the second highest bang-per-buck value for buyer $i$. Hence,
$\Delta^*=\min_{i\in[n]}\Delta_i(\mu^*)$ indicates the smallest bang-per-buck gap between buyers' top two choice at CE. Intuitively, a larger gap makes it easier to rank demands and determine CE prices across goods, which aligns with the result of Theorem \ref{th:recovery}.
\begin{remark}\label{re:recovery}
With $\mu^*$ obtained, the CE prices can be recovered by letting $p_j^*=\exp({\mu^*_j})$, $j\in[m]$. Furthermore, one can recover the CE allocation $x^*$ by computing a feasible solution $\lambda^*\in\R^{n\times(m+1)}$ for the following linear system  and letting $x^*_{ij}=\lambda^*_{ij}/p^*_j$, $i\in[n],j\in[m]$.
\begin{equation}\label{eq:test_stationarity}
      \begin{array}{rlrll}
        \sum\limits_{j\in\{0\}\cup[m]}\lambda_{ij}&=B_i,\qquad&\forall\  i\in[n]; \qquad    \sum\limits_{i\in [n]}\lambda_{ij}&=\exp({\mu^*_j}),\qquad &\forall\ j\in[m];  \\
      \lambda_{ij}&\geq0,\qquad &\forall\ j\in \{0\}\cup[m],i\in[n];  \qquad\lambda_{ij}&=0,\qquad &\forall\ j\notin \cJ_i(\mu^*),i\in[n].\\
    \end{array}
\end{equation}
\end{remark}
\subsection{Construction of Recovery Oracle}\label{subsec:recovery}
The foundation of our recovery oracle lies in the following observation: 
In Problem \eqref{eq:QL-exp}, if the active index sets at the optimal solution $\mu^*$ are obtained, then $\mu^*$ can be computed directly from the optimality conditions. To illustrate this, we recall the maximum functions $h_i(\mu)= \max_{j\in\{0\}\cup[m]}\left\{\log(v_{ij})-\mu_j\right\}$ and the index functions $\cJ_i(\mu)= \argmax_{j\in\{0\}\cup[m]}\left\{\log(v_{ij})-\mu_j\right\}$, $i\in[n]$.
We have the following optimality condition of \eqref{eq:QL-exp}:
\begin{equation}
\begin{aligned}
\begin{pmatrix}
     \exp({\mu_1}),&\exp({\mu_2}),&\ldots,&\exp({\mu_m})
\end{pmatrix}-\sum_{i\in[n]}\lambda_i=0;\\
\text{where }\quad\lambda_i\in-B_i\partial h_i(\mu)=\conv\left\{B_i\e_j:j\in\cJ_i(\mu)\right\}.
\end{aligned} \label{eq:optimality2}   
\end{equation}
The key to solving this equation is thoroughly examining the intersection properties of active index sets $\cJ_i(\mu),i\in[n]$. For this purpose, we define the so-called connection class for $\cJ_i(\mu),i\in[n]$.
\begin{definition}[Connection Class of Index Sets]\label{def:connection}
    Given nonempty index sets $J_i\subseteq[n],i\in[n]$, we say that $J_i$ connects $J_{i^{\prime}}$, denoted by $J_i\sim J_{i^{\prime}}$, if $J_i\cap J_{i^{\prime}}\neq\emptyset$ or there exist indices $i_1,i_2,\ldots,i_s,s\in[n]$ such that 
    \[J_i\cap J_{i_1}\neq\emptyset;\quad J_{i_1}\cap J_{i_2}\neq\emptyset;\quad\cdots\quad;J_{i_{s-1}}\cap J_{i_s}\neq\emptyset;\quad  J_{i_s}\cap J_{i^{\prime}}\neq\emptyset. \]
    Otherwise, we say that $J_i$ disconnects $J_{i^{\prime}}$, denoted by $J_i\nsim J_{i^{\prime}}$.
   Further, we say that $\{J_i:i\in I\subseteq[n]\}$ form a connection class if $J_i\sim J_{i^{\prime}}$ for all $i,i^{\prime}\in I$ and $J_i\nsim J_{i^{\prime}}$ for $i\in I,i^{\prime}\in[n]\setminus I$.
\end{definition}
Clearly, the connection class is an equivalence class and satisfies (i) $J_{i}\sim J_{i}$; (ii) if $J_i\sim J_{i^{\prime}}$, then $ J_{i^{\prime}}\sim J_i$; (iii) if $J_{i_1}\sim J_{i_2}$ and $J_{i_2}\sim J_{i_3}$, then $J_{i_1}\sim J_{i_3}$. Moreover, each index set $J_i$ is contained in a unique connection class. Based on the connection class, we have the following lemma that serves as the foundation of the recovery oracle.
\begin{lemma}\label{le:recover}
    Let $\mu^*$ be the optimal solution of Problem \eqref{eq:QL-exp}. Suppose that the index sets $\cJ_i(\mu^*)=J_i^*,i\in[n]$ are given. Let $\{J^*_i:i\in I^*_l\},l\in[s]$ be all connection classes of $J_i^*,i\in[n]$ and $\tilde{J}_l^*=\bigcup_{i\in I^*_l}J^*_i,l\in[s]$. Then, $[m]\subseteq\bigcup_{l\in[s]}\tilde{J}_l^*=\bigcup_{i\in[n]}J^*_i$ and the optimality condition \eqref{eq:optimality2} reduces to the following equations over $l\in[s]$, which admits an exact solution:
    \begin{gather}
    \label{eq:mu*1}    \log(v_{ij})-\mu_j=\log(v_{ij^{\prime}})-\mu_{j^{\prime}},\quad\forall\ j,j^{\prime}\in J_i^*,\ i\in I^*_l;\\
    \label{eq:mu*2}  \sum_{j\in\tilde{J}^*_l}\exp(\mu_j)= \sum_{i\in I^*_l}B_i\quad\text{if}\quad0\notin \tilde{J}^*_l. \qquad\qquad\quad
    \end{gather}
\end{lemma}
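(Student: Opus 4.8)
The plan is to read the needed equations off the stationarity condition \eqref{eq:optimality2} at $\mu^*$ and then verify that the resulting system pins down $\mu^*$ explicitly. First, write each multiplier $\lambda_i\in\conv\{B_i\e_j:j\in J_i^*\}$ as $\lambda_i=\sum_{j\in J_i^*}c_{ij}B_i\e_j$ with $c_{ij}\ge 0$ and $\sum_{j\in J_i^*}c_{ij}=1$, and set $\lambda_{ij}:=c_{ij}B_i$ for $j\in J_i^*$ and $\lambda_{ij}:=0$ for $j\in(\{0\}\cup[m])\setminus J_i^*$. Using $\e_0=\bz$ and reading \eqref{eq:optimality2} coordinate by coordinate gives $\sum_{j\in\{0\}\cup[m]}\lambda_{ij}=B_i$ for all $i$, $\lambda_{ij}\ge 0$ with $\lambda_{ij}=0$ whenever $j\notin J_i^*$, and $\sum_{i\in[n]}\lambda_{ij}=\exp(\mu^*_j)$ for every $j\in[m]$ (this is exactly the linear system \eqref{eq:test_stationarity}). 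The inclusion $[m]\subseteq\bigcup_{i\in[n]}J_i^*$ is then immediate: since $\exp(\mu^*_j)>0$, some $\lambda_{ij}>0$, forcing $j\in J_i^*$. Finally, $\sim$ is an equivalence relation on $[n]$ (reflexivity uses $J_i\neq\emptyset$), so $\{I^*_l\}_{l\in[s]}$ partitions $[n]$ and hence $\bigcup_{l\in[s]}\tilde J^*_l=\bigcup_{i\in[n]}J_i^*$.

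Next I would derive \eqref{eq:mu*1}--\eqref{eq:mu*2}. Equation \eqref{eq:mu*1} is nothing but the definition of the argmax: for $j,j'\in J_i^*=\cJ_i(\mu^*)$, both $\log(v_{ij})-\mu^*_j$ and $\log(v_{ij'})-\mu^*_{j'}$ equal $h_i(\mu^*)$. The real content is \eqref{eq:mu*2}, whose proof rests on a locality property of the multipliers: if $\lambda_{ij}>0$ then $j\in J_i^*$, so if moreover $j\in\tilde J^*_l$ — i.e. $j\in J^*_{i'}$ for some $i'\in I^*_l$ — then $J_i^*\cap J^*_{i'}\ni j$, hence $J_i^*\sim J^*_{i'}$ and $i\in I^*_l$. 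In particular the sets $\tilde J^*_l$ are pairwise disjoint, and for every $j\in\tilde J^*_l$ we get $\sum_{i\in[n]}\lambda_{ij}=\sum_{i\in I^*_l}\lambda_{ij}$. Now fix $l$ with $0\notin\tilde J^*_l$; then $0\notin J_i^*$ for all $i\in I^*_l$, so $\lambda_{i0}=0$ and $\sum_{j\in\tilde J^*_l}\lambda_{ij}=\sum_{j\in J_i^*}\lambda_{ij}=B_i$. Summing over $i\in I^*_l$ and exchanging summation order,
\[\sum_{i\in I^*_l}B_i=\sum_{j\in\tilde J^*_l}\sum_{i\in I^*_l}\lambda_{ij}=\sum_{j\in\tilde J^*_l}\sum_{i\in[n]}\lambda_{ij}=\sum_{j\in\tilde J^*_l}\exp(\mu^*_j),\]
which is exactly \eqref{eq:mu*2}.

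It remains to show that \eqref{eq:mu*1}--\eqref{eq:mu*2} determine $\mu^*$ and can be solved in closed form. Fix a class $l$ and form the graph on $\tilde J^*_l$ with an edge between $j$ and $j'$ whenever $j,j'\in J_i^*$ for some $i\in I^*_l$; each $J_i^*$ is a clique and a chain realizing $J_i^*\sim J^*_{i'}$ (which stays inside $I^*_l$, by the locality argument above) yields a path, so this graph is connected. Since \eqref{eq:mu*1} fixes $\mu_j-\mu_{j'}$ across every edge, choosing a spanning tree and propagating the log-ratios $\log(v_{ij}/v_{ij'})$ gives explicit constants $a_{lj}$ with $\mu_j=a_{lj}+t_l$ for all $j\in\tilde J^*_l$, leaving a single free scalar $t_l$; the non-tree edges impose nothing new, since $\mu^*$ already satisfies \eqref{eq:mu*1}, so consistency is automatic. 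If $0\in\tilde J^*_l$, then $\mu_0=\log(\alpha)$ is a known constant and anchoring the propagation at $0$ fixes $t_l$; if $0\notin\tilde J^*_l$, then \eqref{eq:mu*2} reads $e^{t_l}\sum_{j\in\tilde J^*_l}e^{a_{lj}}=\sum_{i\in I^*_l}B_i$, which again fixes $t_l$. Hence every $\mu^*_j$ with $j\in\tilde J^*_l$ is determined, and combined with $[m]\subseteq\bigcup_l\tilde J^*_l$ the whole vector $\mu^*$ is recovered; in particular the reduced equations admit a unique, explicitly computable solution, namely $\mu^*$.

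The step needing the most care is this locality property together with the pairwise disjointness of the sets $\tilde J^*_l$, since it is precisely what decouples the single global equation \eqref{eq:optimality2} into one self-contained budget-balance equation per connection class; everything else is either definitional (for \eqref{eq:mu*1}) or automatic because \eqref{eq:mu*1}--\eqref{eq:mu*2} are extracted from a bona fide optimal solution, so no separate solvability or consistency issue arises.
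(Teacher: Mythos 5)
Your proposal is correct and follows essentially the same route as the paper's proof: the inclusion $[m]\subseteq\bigcup_i J_i^*$ via positivity of $\exp(\mu_j^*)$, the decoupling of \eqref{eq:optimality2} into per-class budget equations via the observation that $\lambda_{ij}=0$ for $i\notin I_l^*$, $j\in\tilde J_l^*$, and the propagation of the differences $\mu_j-\mu_{j'}$ along chains within a connection class followed by anchoring the one remaining degree of freedom via $\mu_0=\log(\alpha)$ or \eqref{eq:mu*2}. Your spanning-tree phrasing is just a graph-theoretic repackaging of the paper's explicit chain argument \eqref{eq:chain}--\eqref{eq:chain2}, so no substantive difference.
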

Given Lemma \ref{le:recover}, two natural questions arise: (i) how to derive the active index sets $\cJ_i(\mu^*),i\in[n]$; (ii) how to classify the connection classes of the active index sets efficiently. For question (i), we show that $\cJ_i(\mu^*),i\in[n]$ can be deduced from the relaxed active index sets at nearby points of $\mu^*$.
\begin{lemma}\label{le:index}
          Let $\mu^*$ be the optimal solution of Problem \eqref{eq:QL-exp} and $\mu\in\B(\mu^*,r)$ with $0<r<\Delta^*/4$. Then, we have
 $\cJ_i(\mu^*)=\{j\in\{0\}\cup[m]:\log(v_{ij})-\mu_j\geq h_i(\mu)-2r\}$, $i\in[n]$.
    \end{lemma}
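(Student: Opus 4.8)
The plan is to exploit the Lipschitz stability of the affine functions $j\mapsto \log(v_{ij})-\mu_j$ under perturbations of $\mu$, together with the bang-per-buck gap quantified by $\Delta^*$. The key point is that moving from $\mu^*$ to a nearby $\mu$ shifts each value $\log(v_{ij})-\mu_j$ by at most $r$, while by the definition of $\Delta^*$ the non-maximal values at $\mu^*$ sit a definite distance $\Delta^*$ below the maximum; as long as $r$ is small enough relative to $\Delta^*$, this separation survives the perturbation and the threshold $h_i(\mu)-2r$ lands strictly between the two groups.

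First I would record the elementary perturbation estimate. Since $\mu\in\B(\mu^*,r)$ gives $\|\mu-\mu^*\|_2\leq r$, and $\|\cdot\|_\infty\leq\|\cdot\|_2$, we have $|\mu_j-\mu^*_j|\leq r$ coordinatewise. Writing $g^*_{ij}:=\log(v_{ij})-\mu^*_j$ and $g_{ij}:=\log(v_{ij})-\mu_j$, this yields $|g_{ij}-g^*_{ij}|\leq r$ for all $i,j$, and consequently $|h_i(\mu)-h_i(\mu^*)|\leq r$ for every $i\in[n]$, since the maximum of finitely many functions that are pointwise within $r$ of each other changes by at most $r$.

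Next I would establish the two inclusions. For $\subseteq$: if $j\in\cJ_i(\mu^*)$, then $g^*_{ij}=h_i(\mu^*)$, so $g_{ij}\geq g^*_{ij}-r=h_i(\mu^*)-r\geq h_i(\mu)-2r$, placing $j$ in the right-hand set. For $\supseteq$ via the contrapositive: if $j\notin\cJ_i(\mu^*)$, then by the definition of $\Delta_i$ in \eqref{eq:def-delta} we have $g^*_{ij}\leq h_i(\mu^*)-\Delta_i(\mu^*)\leq h_i(\mu^*)-\Delta^*$; hence $g_{ij}\leq g^*_{ij}+r\leq h_i(\mu^*)-\Delta^*+r\leq h_i(\mu)-\Delta^*+2r<h_i(\mu)-2r$, where the final strict inequality is exactly where the hypothesis $r<\Delta^*/4$ (equivalently $2r-\Delta^*<-2r$) is invoked. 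Thus $j$ fails the defining condition of the right-hand set, and combining the two inclusions gives the claimed identity for each $i\in[n]$.

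The only step requiring any care — and it is minor — is bookkeeping the constant: the threshold $h_i(\mu)-2r$ must absorb one factor of $r$ from comparing $g_{ij}$ with $g^*_{ij}$ and a second from comparing $h_i(\mu)$ with $h_i(\mu^*)$, in both directions, which is precisely why a factor of $4$ (rather than $2$) is needed in $r<\Delta^*/4$. No optimization machinery or properties of $\mu^*$ beyond the definitions of $\cJ_i$ and $\Delta^*$ enter; the whole argument is a stability estimate for piecewise-linear maxima.
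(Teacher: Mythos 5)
Your proof is correct and follows essentially the same route as the paper's: both inclusions rest on the coordinatewise perturbation bound $|\mu_j-\mu_j^*|\leq r$, the $1$-Lipschitz stability of $h_i$, and the fact that non-active indices at $\mu^*$ sit at least $\Delta^*$ below $h_i(\mu^*)$, with the hypothesis $r<\Delta^*/4$ absorbing the accumulated $4r$ of slack. The only cosmetic difference is that you argue the reverse inclusion by contrapositive while the paper argues by contradiction; the estimates are identical.
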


    \begin{algorithm}[htbp]
	\caption{Classification Procedure $\cE$}
	\begin{algorithmic}[1]
		\REQUIRE{Index sets $J_i\subseteq\{0\}\cup[m]$, $i\in[n]$ }
 \STATE $s=0$, $I^c=[n]$ \COMMENT{$I^c$ represents the set of unclassified indices }
 \WHILE{$I^c\neq\emptyset$}
 \STATE $s=s+1$,  
 \STATE Select an arbitrary index $i^s\in I^c$ \COMMENT{Find an unclassified index $i^s\in[n]$}
 \STATE $I_s=\{i^s\}$, $I^c=I^c\setminus\{i^s\}$
 \STATE   $\tilde{J}_s=\bigcup_{i\in I_s}J_i=J_{i^s}$, $J_{check}=\emptyset,t=1$
 \WHILE{$J_{check}\subsetneqq\tilde{J}_s$}
 \STATE Select an arbitrary index $j^s_t\in \tilde{J}_s\setminus J_{check}$\COMMENT{Use indices of $\tilde{J}_s$ to find connected sets}
 \STATE $\cI_{new}(j^s_t)=\{i\in I^c:j^s_t\in J_i\}$ 
 \STATE $I_s=I_s\cup \cI_{new}(j^s_t)$
  \STATE  $I^c=I^c\setminus\cI_{new}(j^s_t) $
 \STATE $\cJ_{new}(j^s_t)= \bigcup_{i\in \cI_{new}(j^s_t)}J_i$;  
  \STATE $\tilde{J}_s=\tilde{J}_s\cup \cJ_{new}(j^s_t)$
 \STATE $J_{check}=J_{check}\cup\{j^s_t\}$, $t=t+1$ \COMMENT{We see that $j^s_t\in J_{i^s}\cup(\bigcup_{\tau\in[t-1]}\bigcup_{i\in\cI_{new}(j^s_{\tau})}J_{i})$}
 \ENDWHILE
 \ENDWHILE
        \ENSURE{$I_l,\tilde{J}_l$ for $l\in[s]$; $i^l,j^l_t,\cI_{new}(j^l_t)$ for $t\in[|\tilde{J}_l|]$, $l\in[s]$}
	\end{algorithmic}
	\label{al:class} 
\end{algorithm}
    
For question (ii), we develop an algorithm, i.e., Algorithm \ref{al:class}, that classifies the connection classes of the input index sets $J_i\subseteq\{0\}\cup[m],i\in[n]$. The main idea is to iteratively incorporate sets that share the elements of the current connected sets. Specifically, we consider the connected sets $J_{i},i\in I$ and select an index $j$ from their combination, i.e., $j\in \tilde{J}\coloneqq\bigcup_{i\in I}J_{i}$. By identifying the sets $J_i,i\in\cI_{new}(j)$ that also contain the element $j$, we enlarge the connected sets to $J_{i},i\in I\cup\cI_{new}(j)$, and then update $\tilde{J}=\bigcup_{i\in I\cup\cI_{new}}J_{i}$, $I=I\cup\cI_{new}$. Repeat this enlarging process until no new connected sets can be found. Then, the connected sets obtained form a connection class. To identify other connection classes and complete the classification, we simply start with unclassified sets and repeat the enlarging process. The validity of Algorithm \ref{al:class} is ensured by the following lemma. 
    \begin{lemma}\label{le:class}
    The classification procedure $\cE$ (i.e., Algorithm \ref{al:class}) satisfies the followings:
    \begin{enumerate}[{\rm (i)}]
        \item The output $\{J_i:i\in I_l\}$, $l\in[s]$ form connection classes of $J_i,i\in[n]$ with $\bigcup_{i\in I_l}J_i=\tilde{J}_l$.
        \item The computational cost of $\cE$ is at most $\cO((m+n)^2)$.
    \end{enumerate}
\end{lemma}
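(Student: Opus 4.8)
The plan is to treat the two assertions separately. For (i), I would verify three things: that $\{I_l\}_{l\in[s]}$ is a partition of $[n]$; that any two index sets placed in the same group are connected; and that index sets placed in different groups are not connected. The partition property is immediate---the outer loop removes $i^s$ from $I^c$ in line~5 and each absorbed $\cI_{new}(\cdot)$ is deleted from $I^c$ in line~11, so both loops terminate and every index is classified exactly once into a nonempty $I_l$. The auxiliary identity $\bigcup_{i\in I_l}J_i=\tilde J_l$ is an invariant of the inner loop: it holds immediately after line~6 and is preserved by lines~10--13, since whenever $I_s$ gains $\cI_{new}(j^s_t)$ the set $\tilde J_s$ simultaneously gains $\cJ_{new}(j^s_t)=\bigcup_{i\in\cI_{new}(j^s_t)}J_i$.

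For the ``same group are connected'' direction I would induct on the inner-loop iterations of a fixed stage $l$: when an index $i$ enters $I_l$ via $\cI_{new}(j^l_t)$ we have $j^l_t\in J_i$ and, at that instant, $j^l_t\in\tilde J_l=\bigcup_{i'\in I_l}J_{i'}$, so $j^l_t\in J_{i'}$ for some $i'$ already present; hence $J_i\cap J_{i'}\neq\emptyset$ and, by transitivity of $\sim$, $J_i$ connects everything currently in $I_l$. The ``different groups are not connected'' direction is the crux. Fix a connection class $C\subseteq[n]$ and let $l_0$ be the least index with $I_{l_0}\cap C\neq\emptyset$; I claim $C\subseteq I_{l_0}$. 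Pick $i_0\in I_{l_0}\cap C$ and any $i'\in C$, and join them by a chain $i_0=a_0,\dots,a_t=i'$ with $J_{a_k}\cap J_{a_{k+1}}\neq\emptyset$, noting all $a_k\in C$. Induct on $k$: assuming $a_k\in I_{l_0}$, choose $j\in J_{a_k}\cap J_{a_{k+1}}$; then $j\in J_{a_k}\subseteq\tilde J_{l_0}$, and since stage $l_0$ ends only when $J_{check}=\tilde J_{l_0}$, the element $j$ was selected as some $j^{l_0}_\tau$, at which point every index still in $I^c$ that contains $j$ was moved into $I_{l_0}$. If $a_{k+1}\notin I_{l_0}$, then, being in $C$, it is classified in some $I_{l_1}$ with $l_1>l_0$ by minimality of $l_0$, hence was unclassified when $j^{l_0}_\tau$ was processed; but $j\in J_{a_{k+1}}$, so it would have been absorbed into $I_{l_0}$---a contradiction. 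Thus $C\subseteq I_{l_0}$, and combined with the previous direction $I_{l_0}=C$, so $C$ meets no other group.

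For (ii) I would amortize over three kinds of events: stage-initializations (line~4), absorptions of an index into some $I_s$, and selections of an element $j^s_t$ (line~8). There are at most $n$ of the first two kinds. For the third, each $j\in\{0\}\cup[m]$ is selected at most once in the whole run: if $j$ were chosen in stage $l$ but also lay in $\tilde J_{l'}$ for some $l'>l$, then some $i\in I_{l'}$ with $j\in J_i$ was unclassified when $j$ was processed in stage $l$, contradicting that all such indices were then pulled into $I_l$; hence at most $m+1$ selections, and so $\cO(m+n)$ events overall. Storing each $J_i$ as a length-$(m+1)$ bitmap, every event costs $\cO(m+n)$: computing $\cI_{new}(j^s_t)$ in line~9 scans the $\cO(n)$ indices still in $I^c$ with $\cO(1)$ membership tests; forming $\cJ_{new}$ and updating $\tilde J_s$ and $J_{check}$ touch $\cO(m)$ entries; and the total work of the unions $\bigcup_{i\in\cI_{new}}J_i$ is charged to the once-only absorption of each $i$. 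Multiplying $\cO(m+n)$ events by $\cO(m+n)$ cost gives the bound $\cO((m+n)^2)$.

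The step I expect to be the main obstacle is the ``different groups are not connected'' half of (i): showing each $I_l$ sits inside one connection class is routine, but ruling out a connected index ending up in a later group forces one to exploit the sequential order in which stages are built, together with the structural fact that a stage does not terminate until every element of its accumulated set $\tilde J_l$ has been checked and hence every index meeting $\tilde J_l$ has already been removed from $I^c$.
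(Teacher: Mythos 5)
Your proposal is correct and follows essentially the same route as the paper's proof: for (i) you rely on the same two structural facts (the invariant $\tilde J_s=\bigcup_{i\in I_s}J_i$ and the fact that a stage terminates only after every element of $\tilde J_s$ has been checked, so every unclassified set meeting $\tilde J_s$ is absorbed), merely packaging the "different groups are disconnected" half as a chain induction with a minimality argument where the paper instead first proves pairwise disjointness of the output unions $\tilde J_s^{\text{out}}$. For (ii) your amortized event count is the same accounting as the paper's line-by-line analysis, resting on the identical observations that $\sum_s|\tilde J_s^{\text{out}}|\le m+1$ and that each index is absorbed only once.
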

In each iteration, Algorithm \ref{al:class} records the indices of the newly incorporated sets that contain the index $j^l_t$. These indices facilitate an explicit solution procedure $\cP$ (Algorithm \ref{al:solution}) for the equations \eqref{eq:mu*1} and \eqref{eq:mu*2}, allowing parallel computation across different $l$. The underlying idea of $\cP$ is to utilize the index sets $\cI_{new}(j^l_t),t\in[|\tilde{J}_l|]$ and \eqref{eq:mu*1} to compute the constants $a_j,j\in\tilde{J}_l$ such that $\mu_j=\mu_{j^l_1}+a_j$, and then substitute $\mu_j=\mu_{j^l_1}+a_j$ into  \eqref{eq:mu*2} to derive a solution.
 \begin{algorithm}[htbp]
     \caption{Solution Procedure $\cP$}
     \begin{algorithmic}[1]
          \REQUIRE{$I_l,\tilde{J}_l$ for $l\in[s]$; $i^l,j^l_t,\cI_{new}(j^l_t)$ for $t\in[|\tilde{J}_l|]$, $l\in[s]$ }
\FOR{$l=1,2,\ldots,s$}
\STATE $a_{j^l_1}=0$, $J_{done}= J_{i^l}$
\STATE $a_j=\log(v_{i^lj})-\log(v_{i^lj^l_1}),\quad\forall\ j\in J_{i^l}$ 
\COMMENT{$a_{j^l_t}$ is computed before the $t$-th inner iteration}
\FOR{$t=1,\ldots,|\tilde{J}_l |$}  
\STATE $a_{j}=a_{j^l_t}+\log(v_{ij})-\log(v_{ij^l_t}),\quad\forall\ j\in J_i\setminus J_{done}, i\in\cI_{new}(j^l_t)$ 
\STATE $J_{done}=J_{done}\cup(\bigcup_{i\in \cI_{new}(j^l_t)}J_i)$
\ENDFOR
\IF{$0\in\tilde{J}_l$}
\STATE $\mu_{j^l_1}=-a_0$\COMMENT{$a_{j},j\in \tilde{J}_l$ are computed in the "for" loop w.r.t. $t$}
\STATE $\mu_j=\mu_{j^l_1}+a_j,$ \quad $\forall\ j\in\tilde{J}_l$ 
\ELSE
\STATE $\mu_{j^l_1}=\log\left(\sum_{i\in I_l}B_i\right)-\log\left(\sum_{j\in \tilde{J}_l}\exp({a_j})\right)$
\STATE $\mu_j=\mu_{j^l_1}+a_j,$ \quad $\forall\ j\in\tilde{J}_l$
\ENDIF
\ENDFOR
     \ENSURE{$\mu$}
     \end{algorithmic}
    \label{al:solution}
 \end{algorithm}
 \begin{lemma}[Property of $\cP$]\label{le:solution}
     Let $\mu^*$ be the optimal solution of Problem \eqref{eq:QL-exp} and $J^*_i=\cJ_i(\mu^*)$, $i\in[n]$. We have $\cP(\cE(\{J^*_i,i\in[n]\}))=\mu^*$, where the computational cost of $\cP$ is at most $\cO(mn)$. 
 \end{lemma}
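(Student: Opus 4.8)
The plan is to show, class by class, that $\cP$ realizes exactly the closed-form recovery licensed by Lemma~\ref{le:recover}, and then to read the cost off the sizes of the index sets. Write $\tilde J^*_l=\bigcup_{i\in I^*_l}J^*_i$ for the connection classes produced by $\cE$ (correct by Lemma~\ref{le:class}) and recall from Lemma~\ref{le:recover} that $\mu^*$ satisfies \eqref{eq:mu*1}--\eqref{eq:mu*2} and that $[m]\subseteq\bigcup_{l\in[s]}\tilde J^*_l$; moreover the $\tilde J^*_l$ are pairwise disjoint, since a common index of $J^*_i$ and $J^*_{i'}$ would force $J^*_i\sim J^*_{i'}$. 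Hence it suffices to prove that, for each $l$, $\cP$ returns $\mu_j=\mu^*_j$ for all $j\in\tilde J^*_l$.

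First I would check that the recursion is well-founded, i.e., that $a_{j^l_t}$ has been assigned by the time the $t$-th inner iteration begins. Matching the two algorithms line by line, after the $t$-th inner iteration the set $J_{done}$ in $\cP$ equals $J_{i^l}\cup\bigcup_{\tau\le t}\bigl(\bigcup_{i\in\cI_{new}(j^l_\tau)}J_i\bigr)$, which is exactly the value held by $\cE$'s variable $\tilde J_s$ (with $s=l$) right before it picks $j^l_{t+1}$ from $\tilde J_s\setminus J_{check}$; thus $j^l_{t+1}\in J_{done}$ and $a_{j^l_{t+1}}$ is available. I would then prove by induction on $t$ the invariant that, after the $t$-th inner iteration, $a_j=\mu^*_j-\mu^*_{j^l_1}$ for every $j\in J_{done}$. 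The base case is lines~2--3 combined with \eqref{eq:mu*1} for $i=i^l$ (note $j^l_1\in J_{i^l}$). In the inductive step, each freshly touched $j\in J_i\setminus J_{done}$ with $i\in\cI_{new}(j^l_t)$ receives $a_j=a_{j^l_t}+\log v_{ij}-\log v_{ij^l_t}$; since $j,j^l_t\in J^*_i$, \eqref{eq:mu*1} gives $\log v_{ij}-\log v_{ij^l_t}=\mu^*_j-\mu^*_{j^l_t}$, and with the inductive value $a_{j^l_t}=\mu^*_{j^l_t}-\mu^*_{j^l_1}$ this yields $a_j=\mu^*_j-\mu^*_{j^l_1}$ (if $j$ lies in several such $J^*_i$ the same value results, again by \eqref{eq:mu*1}). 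Thus at the end of the inner loop $a_j=\mu^*_j-\mu^*_{j^l_1}$ for all $j\in\tilde J^*_l$.

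It then remains to verify the anchor $\mu_{j^l_1}$. If $0\in\tilde J^*_l$ --- which under the convention $v_{i0}=1,\ \mu_0=\log\alpha$ can occur only in the quasi-linear case ($\alpha=1$, whence $\mu_0=0$), since for $\alpha=+\infty$ one has $\mu_0=+\infty$ and index $0$ is never active --- then $\cP$ sets $\mu_{j^l_1}=-a_0=-(\mu^*_0-\mu^*_{j^l_1})=\mu^*_{j^l_1}$. Otherwise $0\notin\tilde J^*_l$, and \eqref{eq:mu*2} gives $\sum_{j\in\tilde J^*_l}e^{a_j}=e^{-\mu^*_{j^l_1}}\sum_{j\in\tilde J^*_l}e^{\mu^*_j}=e^{-\mu^*_{j^l_1}}\sum_{i\in I^*_l}B_i$, so $\cP$'s formula returns $\mu_{j^l_1}=\log\bigl(\sum_{i\in I^*_l}B_i\bigr)-\log\bigl(\sum_{j\in\tilde J^*_l}e^{a_j}\bigr)=\mu^*_{j^l_1}$. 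In both cases $\mu_j=\mu_{j^l_1}+a_j=\mu^*_j$ for all $j\in\tilde J^*_l$, and since the $\tilde J^*_l$ cover $[m]$ we conclude $\cP(\cE(\{J^*_i:i\in[n]\}))=\mu^*$.

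For the cost, the sets $\cI_{new}(j^l_t)$ over all $l,t$ are pairwise disjoint subsets of $[n]$ (each buyer is removed from $I^c$ exactly once in $\cE$), so the double loop in $\cP$ spends $\cO(1)$ per pair $(i,j)$ with $i\in[n]$, $j\in J^*_i$, i.e., $\cO\bigl(\sum_{i\in[n]}|J^*_i|\bigr)=\cO(mn)$ using $|J^*_i|\le m+1$; the initializations in lines~2--3 cost $\sum_l|J_{i^l}|=\cO(mn)$, and finalizing class $l$ costs $\cO(|\tilde J^*_l|+|I^*_l|)$, which sums to $\cO(m+n)$ since the $\tilde J^*_l\subseteq\{0\}\cup[m]$ and the $I^*_l\subseteq[n]$ are disjoint. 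Hence $\cP$ uses $\cO(mn)$ arithmetic operations. The only delicate point is the bookkeeping in the second paragraph --- aligning $\cP$'s traversal with $\cE$'s so that the recursive use of $a_{j^l_t}$ is legitimate and the induction closes; everything else is substitution into \eqref{eq:mu*1}--\eqref{eq:mu*2}.
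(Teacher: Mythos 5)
Your proposal is correct and follows essentially the same route as the paper's proof: an induction over the inner iterations establishing $a_j=\mu^*_j-\mu^*_{j^l_1}$ on each connection class via \eqref{eq:mu*1}, followed by the two-case anchoring of $\mu_{j^l_1}$ (using $\mu_0=0$ when $0\in\tilde J^*_l$ and \eqref{eq:mu*2} otherwise), and a disjointness argument for the $\cO(mn)$ cost. Your explicit check that $a_{j^l_t}$ is already assigned when needed is a point the paper only notes in algorithm comments, but it does not change the argument.
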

Now, we are ready to give our recovery oracle $\cR$ in Algorithm \ref{al:guiding}. With the input $\mu\in\R^m$ and $r\in\R$, we first define $J_i\coloneqq\{j\in\{0\}\cup[m]:\log(v_{ij})-\mu_j\geq h_i(\mu)-2r\}$. By Lemma \ref{le:index}, we know that $J_i=\cJ_i(\mu^*)$ when $\mu\in\B(\mu^*,r)$ for $0<r<\Delta^*/4$. Then, we apply the classification procedure $\cE$ and obtain the connection classes of $J_i,i\in[n]$.  Subsequently, the solution procedure $\cP$ is implemented to output a vector $\hat{\mu}$.  By Lemma \ref{le:solution}, the recovery $\hat{\mu}=\mu^*$ is guaranteed when $J_i=\cJ_i(\mu^*)$. Hence, we have $\cR(\mu,r)=\mu^*$ for $\mu\in\B(\mu^*,r)$ with $0<r<\Delta^*/4$. Moreover, the computational cost of $\cR$ is dominated by the classification procedure $\cE$ and solution procedure $\cP$, leading to a total cost of $\cO((m+n)^2)$ by Lemma \ref{le:class} and \ref{le:solution}.  We summarize these results in the following proposition, which directly proves Theorem \ref{th:recovery}.
\begin{algorithm}[htbp]
	\caption{Recovery oracle $\cR$}
	\begin{algorithmic}[1]
		\REQUIRE{Parameter $r>0$ and vector $\mu\in\R^m$   }
  \STATE Let $h_i= \max_{j\in\{0\}\cup[m]}\{\log(v_{ij})-\mu_j\}$, 
  $i\in[n]$  
    \STATE Compute the index sets $J_i=\{j\in\{0\}\cup[m]:\log(v_{ij})-\mu_j\geq h_i-2r\}$, $i\in[n]$
    \IF{$[m]\subseteq \bigcup_{i\in[n]}J_i$}
     \STATE $\hat{\mu}=\cP(\cE(\{J_i,i\in[n]\}))\in\R^m$
    \ELSE
    \STATE $\hat{\mu}=\text{NaN}\notin\R^m$
    \ENDIF
        \ENSURE{$\hat{\mu}$}
	\end{algorithmic}
	\label{al:guiding} 
\end{algorithm}

\begin{proposition}\label{pro:exact}
 Let $\mu^*$ be the optimal solution of Problem \eqref{eq:QL-exp} and $\Delta^*$ be defined by \eqref{eq:def-delta}. 
Given the input $\mu\in\R^m,r>0$, let $\hat{\mu}\in\R^m$ be the output of the recovery oracle $\cR$, i.e.,  $\hat{\mu}=\cR(\mu,r)$. The following holds:
\begin{enumerate}[{\rm (i)}]
  \item If the inputs $(\mu,r)$ satisfy $0<r<\Delta^*/4$ and $\mu\in\B(\mu^*,r)$, then we have $\hat{\mu}=\mu^*$.
    \item The computational cost of $\cR$ is at most $\cO((m+n)^2)$.
\end{enumerate}
\end{proposition}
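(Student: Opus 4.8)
The plan is to verify that Algorithm~\ref{al:guiding} is, on the relevant inputs, exactly the composition $\cP\circ\cE$ applied to the active index sets $\cJ_i(\mu^*)$, and then to chain together the lemmas of Section~\ref{subsec:recovery}. For part~(i), fix inputs $(\mu,r)$ with $0<r<\Delta^*/4$ and $\mu\in\B(\mu^*,r)$. The oracle first forms $h_i=\max_{j\in\{0\}\cup[m]}\{\log(v_{ij})-\mu_j\}$ and $J_i=\{j\in\{0\}\cup[m]:\log(v_{ij})-\mu_j\geq h_i-2r\}$. By Lemma~\ref{le:index}, under precisely these hypotheses one has $J_i=\cJ_i(\mu^*)$ for every $i\in[n]$; this is the only point at which the localization hypothesis $\|\mu-\mu^*\|_2<\Delta^*/4$ enters. (As a sanity check on non-vacuity: $\Delta^*=\Delta(\mu^*)>0$ always, since in~\eqref{eq:def-delta} the second maximum is over a strictly smaller index set, with the convention $\max_{j\in\emptyset}=-\infty$ covering the case $\cJ_i(\mu^*)=\{0\}\cup[m]$.)

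Next I would check that the ``if'' branch of Algorithm~\ref{al:guiding} is taken. By Lemma~\ref{le:recover}, the sets $J_i^*=\cJ_i(\mu^*)$ satisfy $[m]\subseteq\bigcup_{i\in[n]}J_i^*$; since $J_i=J_i^*$, the test $[m]\subseteq\bigcup_{i\in[n]}J_i$ passes, so the oracle returns $\hat{\mu}=\cP(\cE(\{J_i,i\in[n]\}))=\cP(\cE(\{J_i^*,i\in[n]\}))$. Lemma~\ref{le:class}(i) then guarantees that $\cE$ returns genuine connection classes $\{J_i^*:i\in I_l\}$ with $\bigcup_{i\in I_l}J_i^*=\tilde{J}_l$, $l\in[s]$ --- exactly the data in the hypothesis of Lemma~\ref{le:solution}. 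Applying Lemma~\ref{le:solution} gives $\cP(\cE(\{J_i^*,i\in[n]\}))=\mu^*$, hence $\hat{\mu}=\mu^*$, establishing~(i). Since Theorem~\ref{th:recovery} merely asserts the existence of such an $\cR$, this also proves Theorem~\ref{th:recovery}, with the constant $\Delta^*$ from~\eqref{eq:def-delta}.

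For part~(ii) I would tally arithmetic operations stage by stage. Computing all $h_i$ and all the membership tests defining the $J_i$ costs $\cO(mn)$; the sets $J_i$ together contain at most $n(m+1)$ elements, so checking $[m]\subseteq\bigcup_{i\in[n]}J_i$ by marking goods also costs $\cO(mn)$. The classification procedure $\cE$ costs $\cO((m+n)^2)$ by Lemma~\ref{le:class}(ii), and the solution procedure $\cP$ costs $\cO(mn)$ by Lemma~\ref{le:solution}. Since $mn\leq(m+n)^2$, the total is dominated by $\cO((m+n)^2)$, which is~(ii).

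I do not expect a genuine obstacle in this proof: all the substantive content --- the reduction of the optimality system~\eqref{eq:optimality2} to the solvable equations~\eqref{eq:mu*1}--\eqref{eq:mu*2} (Lemma~\ref{le:recover}), the stability of active index sets at nearby points (Lemma~\ref{le:index}), and the correctness and $\cO((m+n)^2)$ complexity of the classification procedure (Lemma~\ref{le:class}) --- has been discharged in the preceding lemmas. The only care needed here is the bookkeeping: confirming that the branch condition $[m]\subseteq\bigcup_{i\in[n]}J_i$ is triggered under the stated hypotheses, and that the tuple $(I_l,\tilde{J}_l,i^l,j^l_t,\cI_{new}(j^l_t))$ output by $\cE$ is in exactly the input format consumed by $\cP$; if anything is a ``hard part'' it is making sure these interfaces line up, which is routine once Lemmas~\ref{le:class} and~\ref{le:solution} are in hand.
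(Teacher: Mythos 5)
Your proposal is correct and follows essentially the same route as the paper's proof: invoke Lemma \ref{le:index} to identify $J_i=\cJ_i(\mu^*)$, Lemma \ref{le:recover} to confirm the branch condition $[m]\subseteq\bigcup_{i\in[n]}J_i$, and Lemmas \ref{le:class} and \ref{le:solution} for the correctness of $\cP\circ\cE$ and the $\cO((m+n)^2)$ cost bound. Your added remarks (the non-vacuity check $\Delta^*>0$ and the explicit interface check between $\cE$ and $\cP$) are harmless elaborations of the same argument.
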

\begin{proof}
(i) Given $\mu\in\B(\mu^*,r)$ and $0<r<\Delta^*/4$, Lemma \ref{le:index} ensures  $J_i=\cJ_i(\mu^*)$, $i\in[n]$. Then, we have $[m]\subseteq \bigcup_{i\in[n]}J_i$ by Lemma \ref{le:recover}, and further $\hat{\mu}=\cP(\cE(\{\cJ_i(\mu^*),i\in[n]\}))$ by Line 4 of Algorithm \ref{al:guiding}. The desired $\hat{\mu}=\mu^*$ follows from Lemma \ref{le:solution}.

(ii) Clearly, in Algorithm \ref{al:guiding}, the total computational cost of evaluating $h_i, J_i$, $i\in[n]$, and $\bigcup_{i\in[n]}J_i$ is of the order $\cO(mn)$. Hence, it suffices to show that the procedures $\cE,\cP$ have a complexity of $\cO((m+n)^2)$, which is ensured by Lemma \ref{le:class} and \ref{le:solution}.
\end{proof}
\subsection{Adaptive APM}\label{subsec:adaptive}
Given Theorem \ref{th:recovery}, a naive approach to compute the exact CE prices is to apply the oracle $\cR$ to the last iteration of APM: By the $\sigma$-strong convexity of $F$ on the box $[(\underline{\mu}-\eta)\1,(\underline{\mu}+\eta)\1]$, the output of APM with $\epsilon<\sigma\cdot(\Delta^*)^2/32$, denoted by $\mu$, satisfies $\|\mu-\mu^*\|_2<\Delta^*/4$. Hence, one may expect to recover $\mu^*$ via $\cR(\mu,\sqrt{2\epsilon/\sigma})$. However, such a scheme is impractical, as the radius $\Delta^*$ is unknown, making it impossible to set an appropriate accuracy $\epsilon$ for APM.

An improved approach is to find a sequence of APM outputs $\{\mu^k\}_{k\geq0}$ with decaying accuracy $\{\epsilon_k\}_{k\geq0}$, i.e., $F(\mu^k)-F(\mu^*)\leq\epsilon_k$ with $\epsilon_k\to0$, and subsequently compute $\hat{\mu}^k=\cR(\mu^k,\sqrt{2\epsilon_k/\sigma})$, $k\geq0$. We call this algorithm \textit{adaptive APM}. Clearly, there exists an index $K>0$ such that $\sqrt{2\epsilon_k/\sigma}<\Delta^*/4$  for all $k\geq K$. It follows that $\hat{\mu}^k=\mu^*$ for all $k\geq K$ by Theorem \ref{th:recovery}. Therefore, the adaptive APM finds CE in finite steps. 

To stop the adaptive APM, it suffices to test the optimality of $\hat{\mu}^k$, $k\geq0$ for \eqref{eq:QL-exp}. The test is equivalent to checking the feasibility of the linear system \eqref{eq:test_stationarity} with $\mu^*$ replaced by $\hat{\mu}^k$, which can be further formulated as a max-flow problem and efficiently solved, as detailed in Appendix \ref{appen:test}. We now formally present the adaptive APM as follows.
\begin{empheq}[box=\fbox]{align*}
   & \textbf{Adaptive APM}\quad\\
    \textbf{Input:}\qquad &\quad  \{\epsilon_k\}_{k\geq1} \text{ with }\epsilon_k= \theta^{k},\  \theta\in(0,1).\\
     \textbf{Inner Solver:}\qquad &\quad   \text{ APM  with strong convexity modulus }\sigma. \\
    \textbf{Stopping Criterion:}\qquad &\quad \hat{\mu}^k \text{ satisfies the optimality conditions of Problem \eqref{eq:QL-exp}.}  \\
\textbf{Approximate Step:}\qquad &\quad \textit{Run the inner solver to find } \mu^k\text{ satisfying }F(\mu^k)-F(\mu^*)\leq\epsilon_k. \\
\textbf{Recovery Step:}\qquad &\quad \hat{\mu}^k=\cR(\mu^k,\sqrt{2\epsilon_k/\sigma}). 
\label{alg:acc_tato}
\end{empheq}

\begin{theorem}\label{th:SAG_exact}
The adaptive APM finds CE prices in at most $K$ iterations with 
\[K=\max\left\{\left\lceil2\log_{\theta}\left(\frac{\sqrt{\sigma}\Delta^*}6\right)\right\rceil,1\right\}.\] 
\end{theorem}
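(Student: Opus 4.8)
The plan is to bound the number of iterations $K$ needed for the adaptive APM to trigger its stopping criterion, by showing that once the recovery radius $\sqrt{2\epsilon_k/\sigma}$ drops below $\Delta^*/4$, the recovery step produces exactly $\mu^*$, which then certifiably satisfies the optimality conditions of \eqref{eq:QL-exp}. First I would recall the key pieces already in place: by Theorem \ref{th:recovery}, $\cR(\mu,r)=\mu^*$ whenever $\mu\in\B(\mu^*,r)$ and $0<r<\Delta^*/4$; and by the Approximate Step, $\mu^k$ satisfies $F(\mu^k)-F(\mu^*)\leq\epsilon_k$. So I need two things to hold simultaneously at iteration $k$: (a) $\mu^k\in\B(\mu^*,\sqrt{2\epsilon_k/\sigma})$, and (b) $\sqrt{2\epsilon_k/\sigma}<\Delta^*/4$.

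For (a), I would invoke the $\sigma$-strong convexity of $F$ on the box $[(\underline\mu-\eta)\1,(\bar\mu+\eta)\1]$ (noted in the text, with $\sigma=\exp(\underline\mu-\eta)$, and the box contains both $\mu^*$ and the APM iterates). Strong convexity gives the standard quadratic-growth inequality $\frac{\sigma}{2}\|\mu^k-\mu^*\|_2^2\leq F(\mu^k)-F(\mu^*)\leq\epsilon_k$, hence $\|\mu^k-\mu^*\|_2\leq\sqrt{2\epsilon_k/\sigma}$, which is exactly $\mu^k\in\B(\mu^*,\sqrt{2\epsilon_k/\sigma})$. (Here I should be slightly careful that the radius needs to be positive; since $\epsilon_k=\theta^k>0$, this is fine, and if $\mu^k=\mu^*$ already the conclusion is trivial.) For (b), since $\epsilon_k=\theta^k$ with $\theta\in(0,1)$, I solve $\sqrt{2\theta^k/\sigma}<\Delta^*/4$, i.e. $\theta^k<\sigma(\Delta^*)^2/32$, i.e. $k\log\theta<\log(\sigma(\Delta^*)^2/32)$; since $\log\theta<0$ this rearranges to $k>\log_\theta(\sigma(\Delta^*)^2/32)=2\log_\theta(\sqrt{\sigma}\Delta^*/(4\sqrt2))$. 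I would then check that $2\log_\theta(\sqrt\sigma\Delta^*/6)$ dominates this threshold (since $6>4\sqrt2$ and $\log_\theta$ is decreasing in its argument... wait — $\log_\theta$ is decreasing because the base $\theta<1$, so a \emph{larger} argument gives a \emph{smaller} logarithm; thus $2\log_\theta(\sqrt\sigma\Delta^*/6)\geq 2\log_\theta(\sqrt\sigma\Delta^*/(4\sqrt2))$ since $\sqrt\sigma\Delta^*/6<\sqrt\sigma\Delta^*/(4\sqrt2)$). So taking $K=\max\{\lceil 2\log_\theta(\sqrt\sigma\Delta^*/6)\rceil,1\}$ guarantees $k\geq K\Rightarrow$ both (a) and (b), and the $\max$ with $1$ and the ceiling just ensure $K$ is a valid positive integer index. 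Then by Theorem \ref{th:recovery}, $\hat\mu^K=\cR(\mu^K,\sqrt{2\epsilon_K/\sigma})=\mu^*$.

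The final step is to confirm that $\hat\mu^K=\mu^*$ actually triggers the stopping criterion — i.e. that $\mu^*$ satisfies the optimality conditions of \eqref{eq:QL-exp}. This is immediate since $\mu^*$ is by definition the optimizer, so the test in the Recovery/Stopping step (feasibility of the linear system \eqref{eq:test_stationarity}, which is precisely the stationarity condition \eqref{eq:optimality2}) passes; hence the algorithm halts at iteration at most $K$ and returns CE prices $p^*_j=\exp(\mu^*_j)$. I would also remark that for all $k\geq K$ (not just $k=K$) one has $\hat\mu^k=\mu^*$, so the criterion is not missed.

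The main obstacle — really the only non-bookkeeping point — is the quadratic-growth step: I must make sure the strong-convexity argument is applied on a set genuinely containing both $\mu^*$ and $\mu^k$. The APM iterates live in the relaxed box $[(\underline\mu-\eta)\1,(\bar\mu+\eta)\1]$ by its projection step, and $\mu^*\in[\underline\mu\1,\bar\mu\1]$ by \eqref{eq:mu}, so both sit in the relaxed box where $F$ is $\sigma$-strongly convex (Fact \ref{fact:f_delta}(ii) for $F_\delta$, and the text asserts the same $\sigma$ for $F$ on the box); the rest is a routine chase through the definition of $\log_\theta$ with a decreasing base, which is where sign errors could creep in and must be handled carefully.
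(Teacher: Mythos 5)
Your proposal is correct and follows essentially the same route as the paper's proof: quadratic growth from $\sigma$-strong convexity of $F$ on the relaxed box gives $\|\mu^k-\mu^*\|_2\leq\sqrt{2\epsilon_k/\sigma}$, and the choice of $K$ forces $\sqrt{2\epsilon_k/\sigma}\leq\Delta^*/(3\sqrt{2})<\Delta^*/4$ for $k\geq K$, so Theorem \ref{th:recovery} yields $\hat{\mu}^k=\mu^*$. Your extra bookkeeping (the $6$ vs.\ $4\sqrt{2}$ comparison with the decreasing base-$\theta$ logarithm, and the observation that $\hat\mu^k=\mu^*$ indeed triggers the stopping criterion) is accurate and matches the constants the paper uses implicitly.
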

To the best of our knowledge, Theorem \ref{th:SAG_exact} provides the first convergence guarantee to exact CE prices for price-adjustment processes in (quasi-)linear Fisher markets, thereby addressing (Q2). This demonstrates the power of our recovery oracle. Theorem \ref{th:SAG_exact} also has an economic interpretation: As the market adapts the prices with diminishing increments (i.e., $\epsilon_k\to0$ and the stepsize of the inner solver goes to zero), the iterate prices would well approximate the CE prices, providing sufficient information for sellers and buyers to determine satisfactory prices (i.e., CE prices).
\begin{remark}\label{re:recovery3}
    By combining Theorem \ref{th:SAG_exact} and \ref{th:SAG}, we see that the adaptive APM needs at most     
    \[ \sum_{k=1}^K\tilde{\cO}\left(\frac1{\sqrt{\epsilon_k}}\right)\leq \tilde{\cO}\left(\frac{K}{\sqrt{\epsilon_K}}\right)=\tilde{\cO}\left(\frac1{\Delta^*}\right)\]
     APM iterations to compute the exact CE. 
\end{remark}
\begin{remark}\label{re:recovery2}
  In the adaptive APM framework, if we substitute the inner solver with the additive t\^atonnement and replace the strong convexity modulus $\sigma$ with the quadratic growth modulus $\alpha$ in \cite[Lemma 4]{chaudhury2024competitive}, the result of Theorem \ref{th:SAG_exact} remains valid, with $\sigma$ replaced by $\alpha$. 
\end{remark}
\begin{proof}[Proof of Theorem \ref{th:SAG_exact}]
     We first show that the adaptive APM stops in $K$ iterations. Notice that the outputs $\mu^k$, $k\geq0$ of APM are in the box $[(\underline{\mu}-\eta)\1,(\underline{\mu}+\eta)\1]$ and $F$ is $\sigma$-strongly convex on this box. We have $F(\mu^k)-F(\mu^*)\geq\frac{\sigma}{2}\|\mu^k-\mu^*\|_2^2$ for all $k\geq0$. On the other hand, the approximation step ensures $F(\mu^k)-F(\mu^*)\leq\epsilon_k$. We have
 \begin{equation}\label{eq:strong}
     \|\mu^k-\mu^*\|_2\leq\sqrt{\frac{2}{\sigma}\epsilon_k},\qquad\forall\ k\geq0.
 \end{equation}
 By the definition of $K$, for $k\geq K$, we have $\epsilon_{k}\leq\theta^{K}\leq \sigma (\Delta^*)^2/36$, or equivalently, $\sqrt{2\epsilon_{k}/\sigma}\leq \Delta^*/(3\sqrt{2})$. This, together with \eqref{eq:strong} and Theorem \ref{th:recovery}, implies $\hat{\mu}^{k}=\mu^*$ for $k\geq K$. We complete the proof.
\end{proof}

\section{Experiments} \label{sec:experiments}
In this section, we demonstrate the competitive numerical performance of our APM in computing approximate CE prices. Moreover, we show that the adaptive APM can successfully recover exact CE prices, requiring much less time than the off-the-shelf solver. We run all experiments on a personal desktop that uses the Apple M3 Pro Chip and has 18GB RAM. The optimization solver implemented is Mosek version 10.2.3. 
\subsection{APM for Computing Approximate CE}
We evaluate the number of iterations needed by different algorithms for finding approximate CE prices. We consider both linear and quasi-linear utility and collect data for utility parameters from synthetic and real-world datasets under different instance sizes. The budgets $B_i$, $i\in[n]$ are set as $1$, following the commonly adopted equal income setting \cite{gao2020first}. We use the additive t\^atonnement method \cite{nan2024convergence}, the mirror descent method \cite{gao2020first,birnbaum2011distributed}, and PDHG \cite{liu2025pdhcg} as benchmarks, as they are representative of t\^atonnement, proportional response dynamics, and first-order methods respectively. Moreover, they share the same iteration computational cost of $\cO(mn)$ with APM. 
\begin{figure}[htbp]
    \centering
        \includegraphics[width=0.32\linewidth]{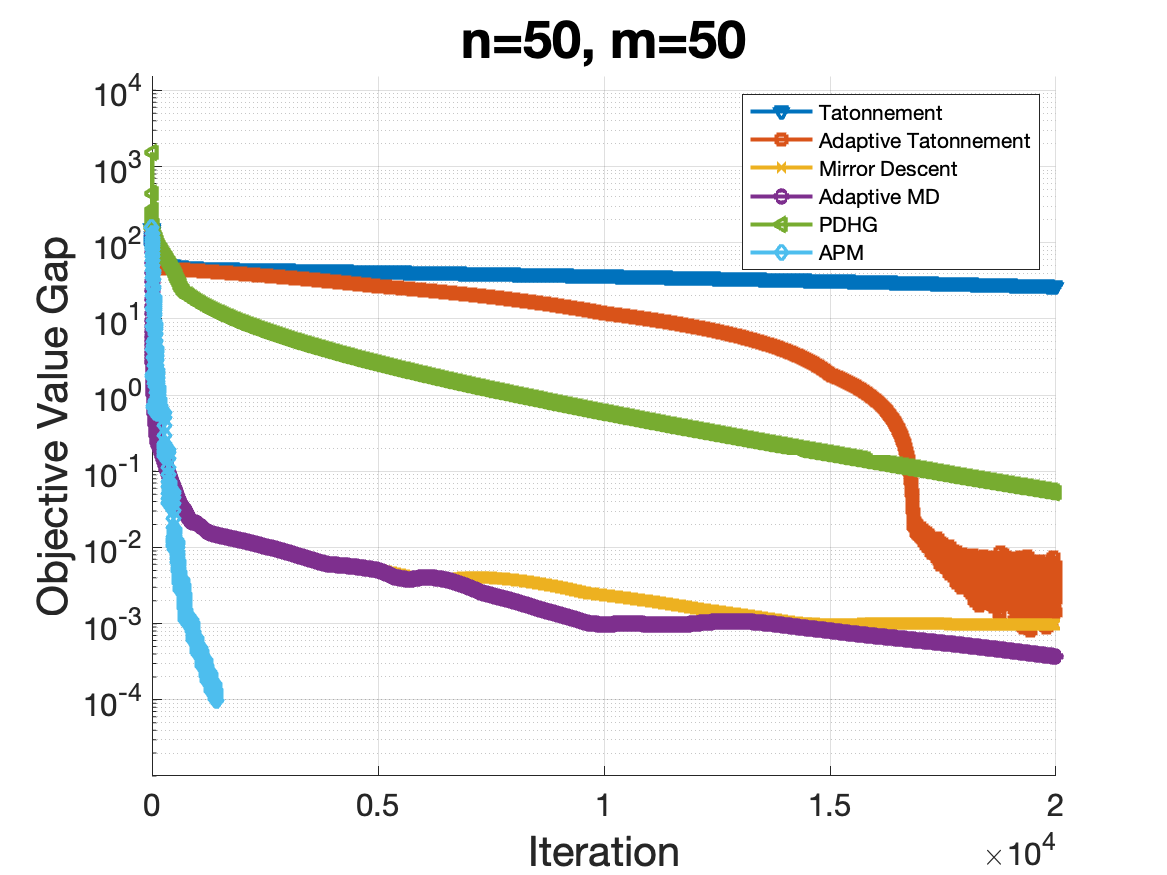} 
        \includegraphics[width=0.32\linewidth]{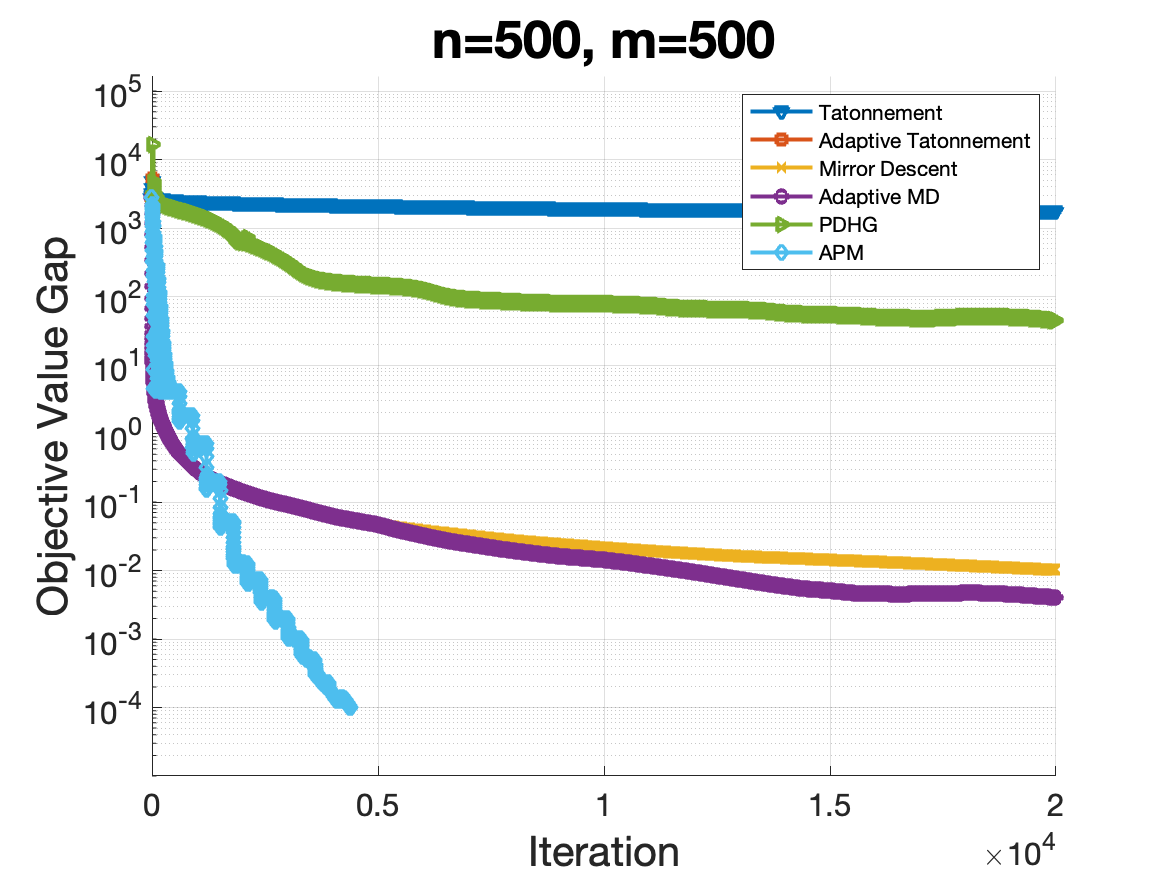} 
        \includegraphics[width=0.32\linewidth]{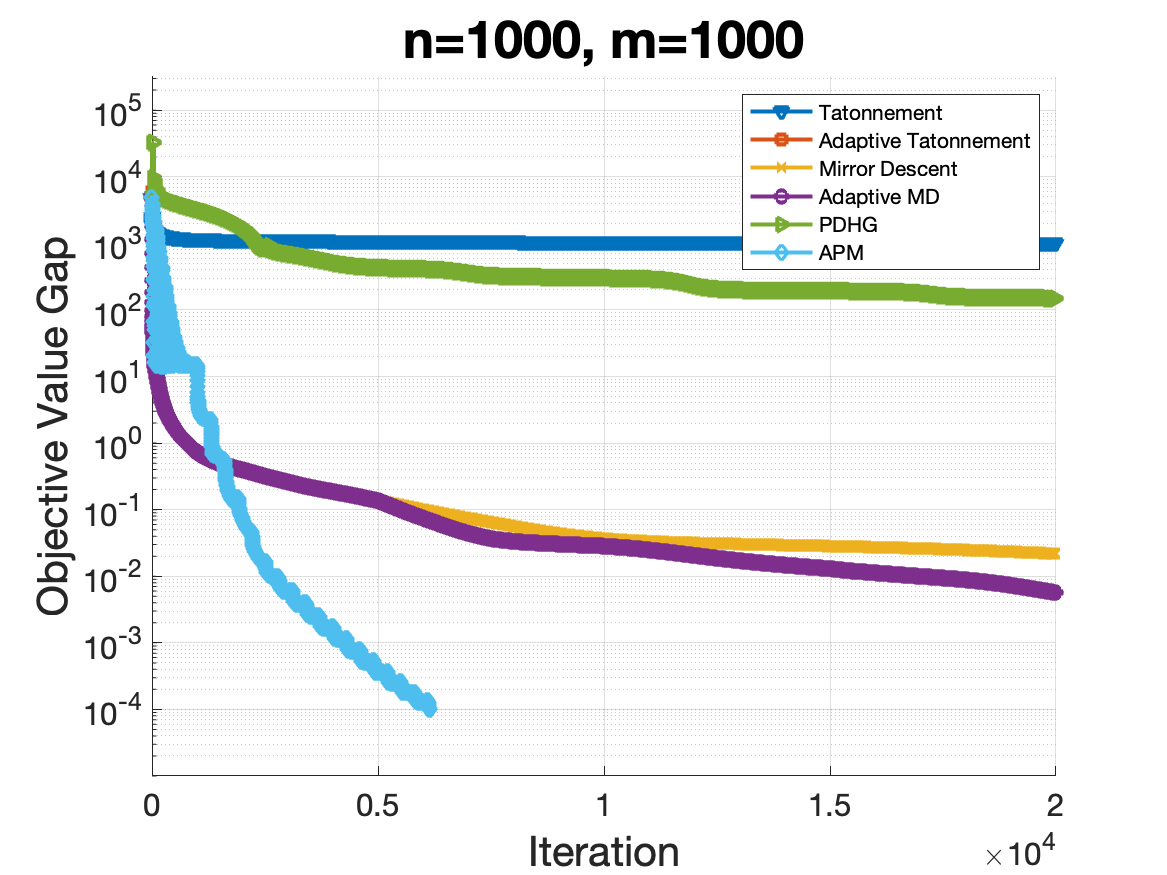} 
    \caption{Comparison of different algorithms for computing $\epsilon$-CE prices in \textit{linear} Fisher markets with uniform data, where the precision $\epsilon$ is $10^{-4}$.} 
    \label{figure:adaptive_md_linear}
\end{figure}
\begin{figure}[htbp]
	\centering
	\centering
	\includegraphics[width=0.32\linewidth]{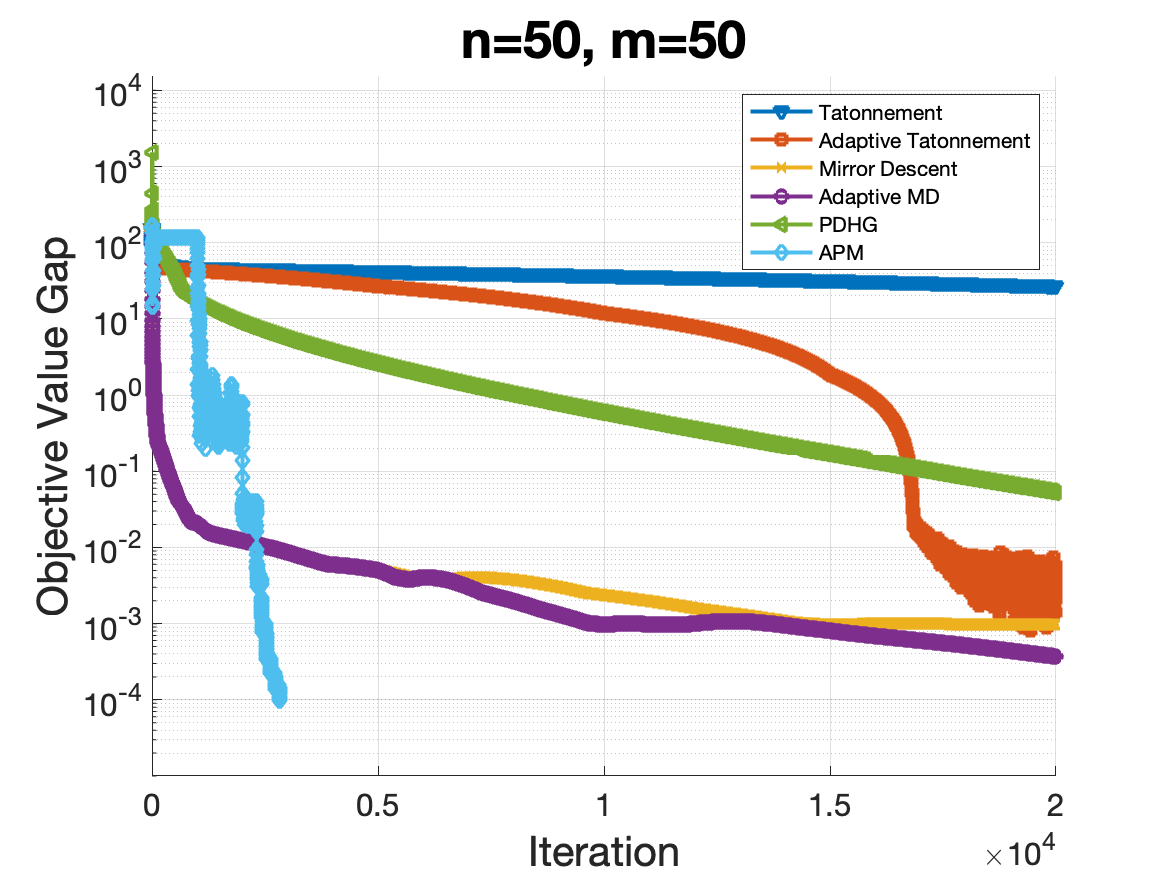} 
	\includegraphics[width=0.32\linewidth]{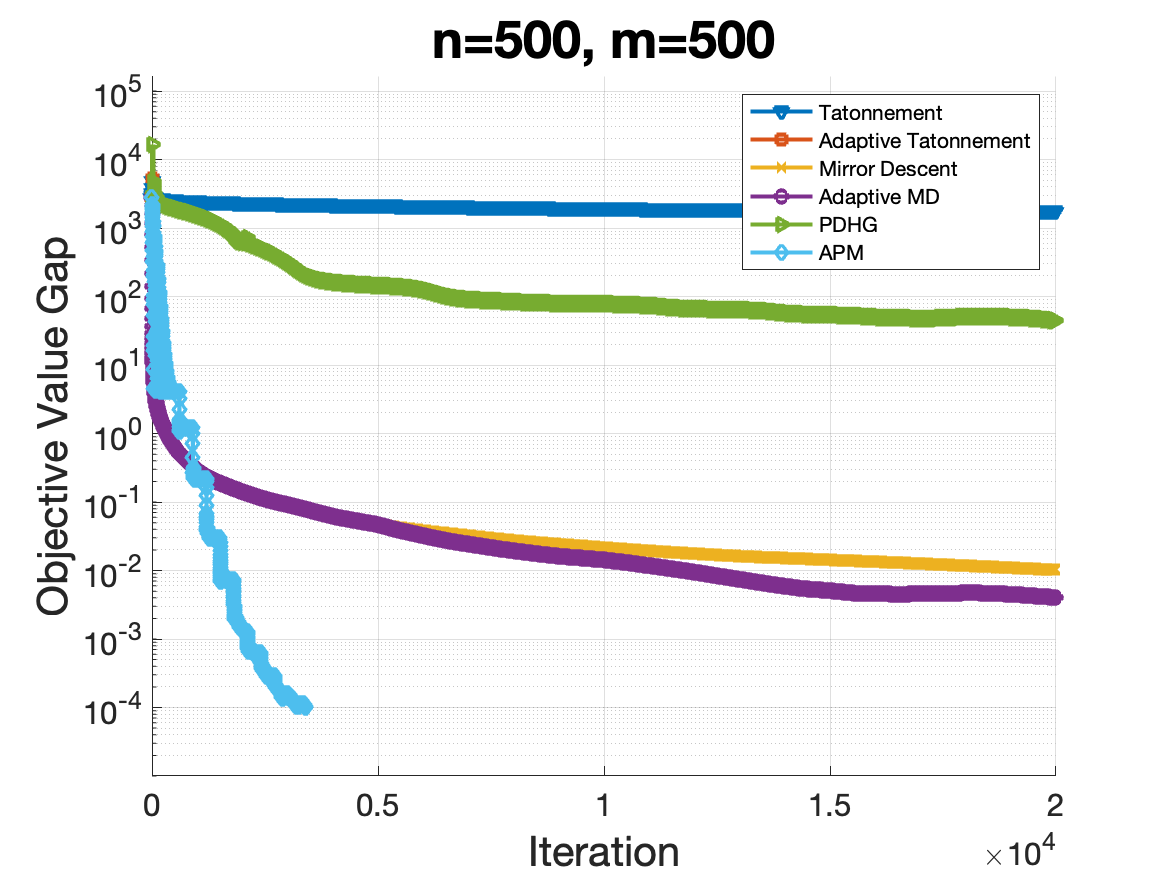} 
	\includegraphics[width=0.32\linewidth]{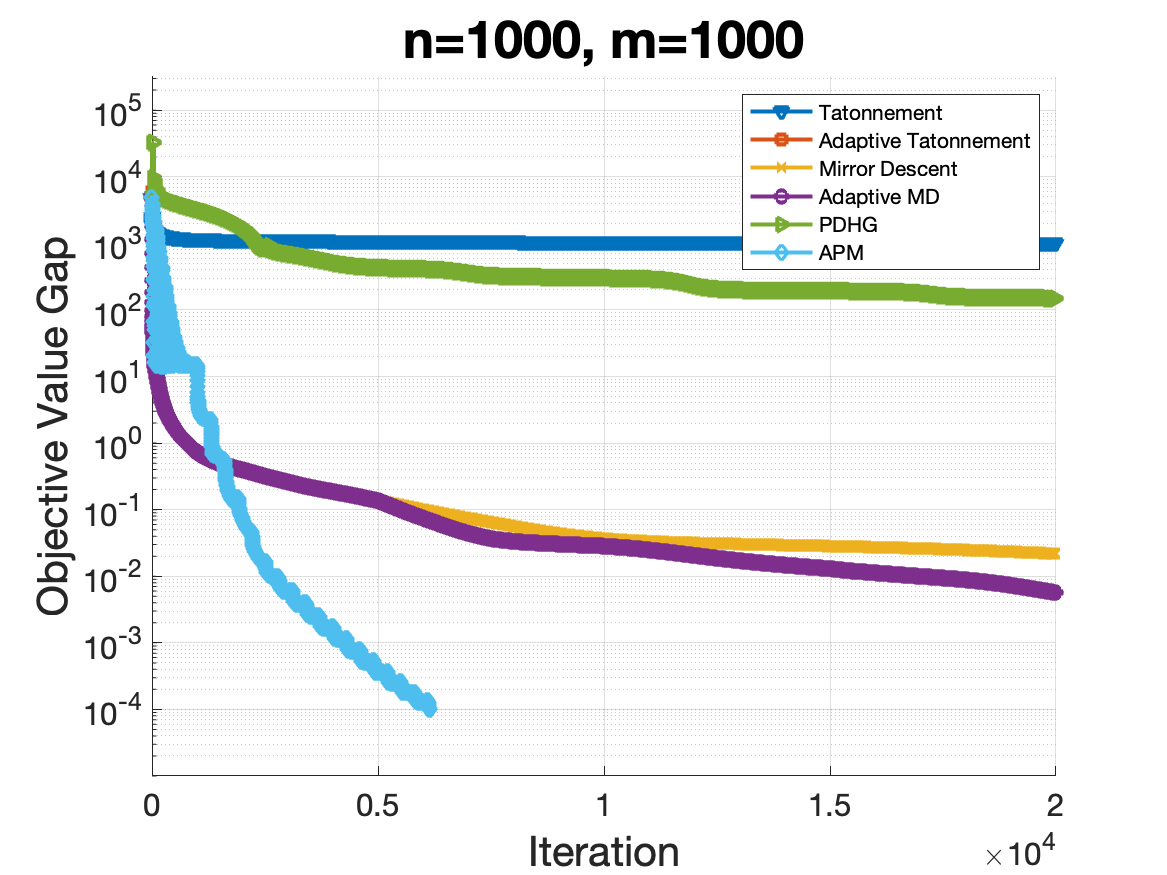} 
	\caption{Comparison of different algorithms for computing $\epsilon$-CE prices in \textit{linear} Fisher markets with exponential data, where the precision $\epsilon$ is $10^{-4}$.}
	\label{figure:exp_linear}
\end{figure}
\begin{figure}[htbp]
    \centering
        \includegraphics[width=0.32\linewidth]{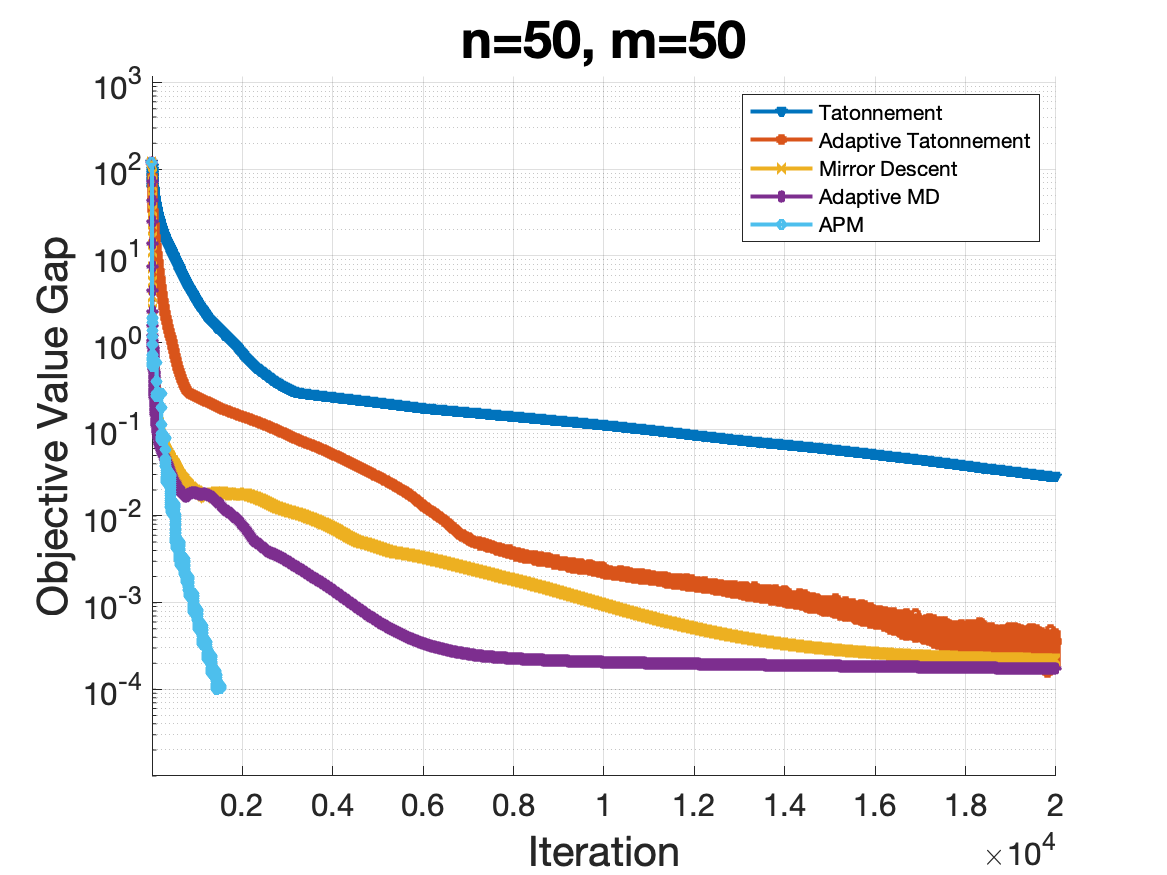} 
        \includegraphics[width=0.32\linewidth]{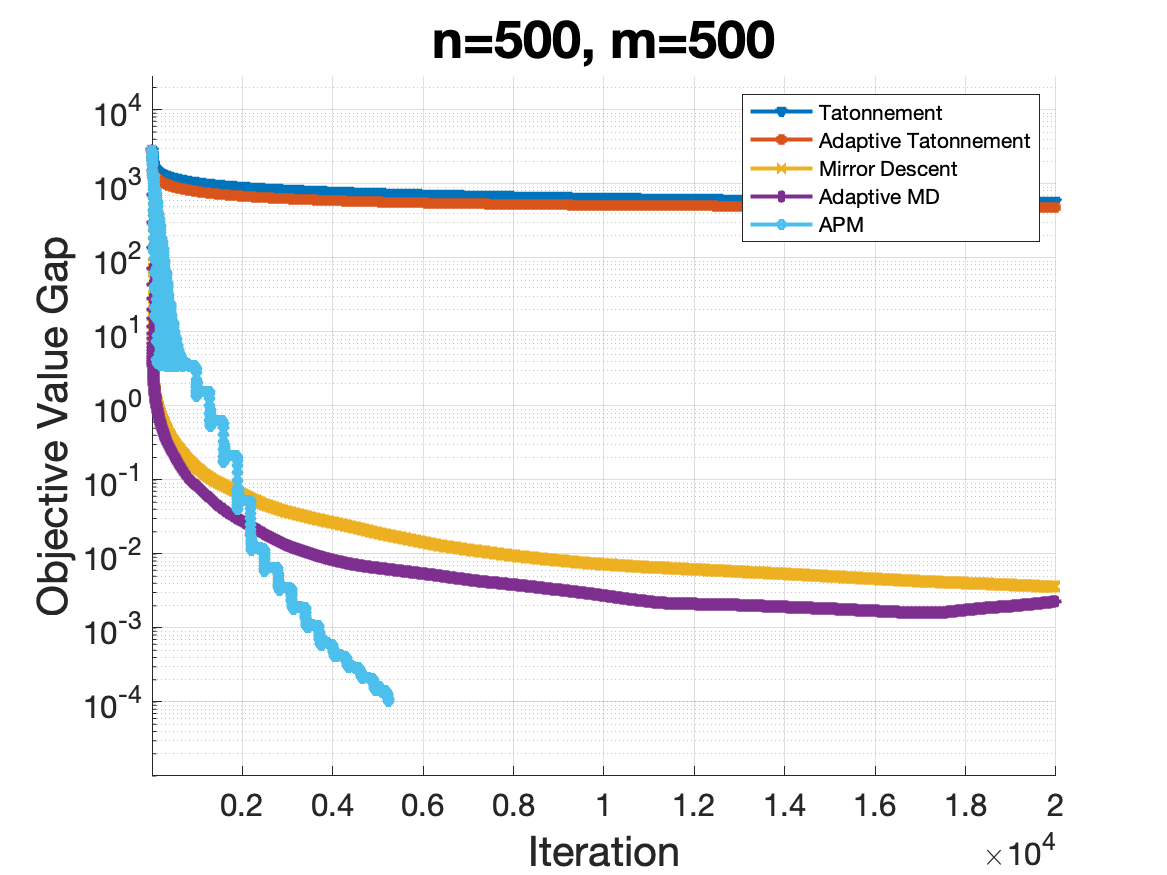} 
        \includegraphics[width=0.32\linewidth]{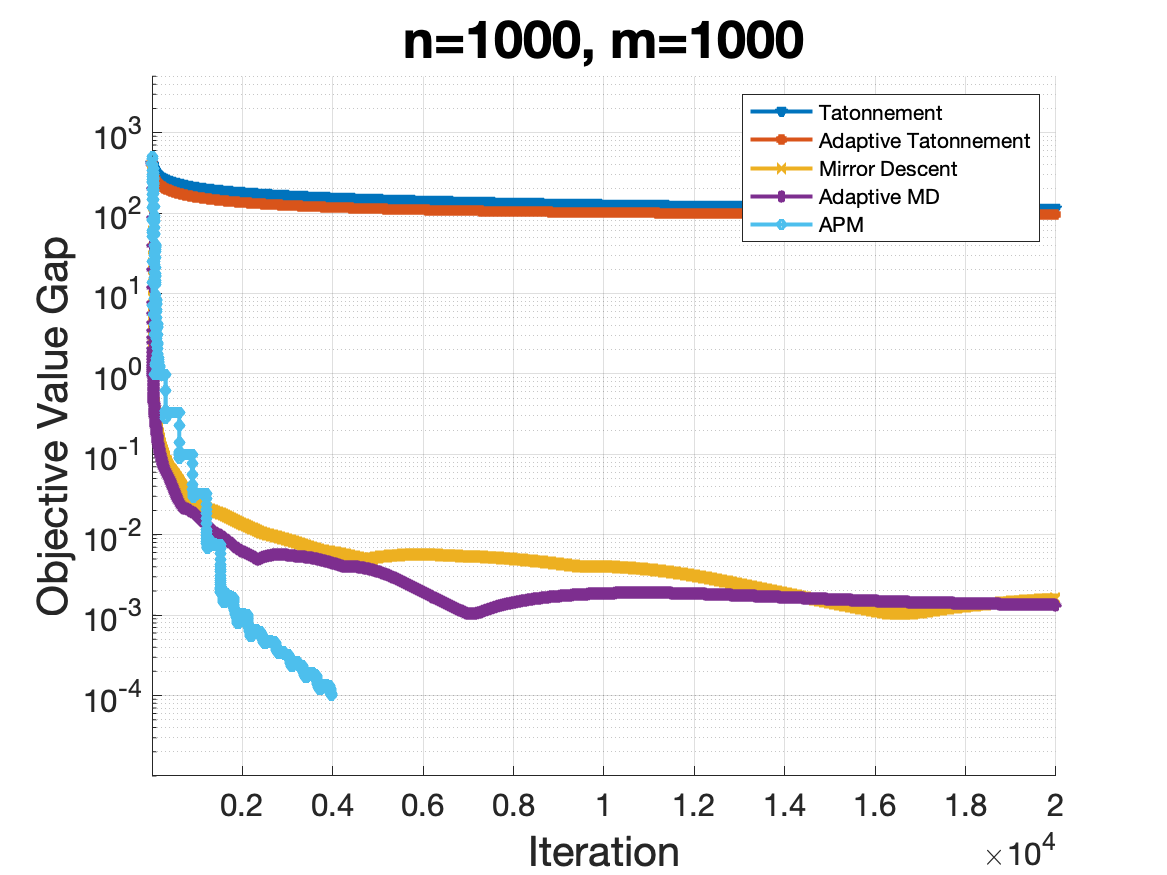} 
    \caption{Comparison of different algorithms for computing $\epsilon$-CE prices in \textit{quasi-linear} Fisher markets with uniform data, where the precision $\epsilon$ is $10^{-4}$.} 
    \label{figure:adaptive_md_quasi_linear}
\end{figure}
\begin{figure}[htbp]
	\centering
	\centering
	\includegraphics[width=0.32\linewidth]{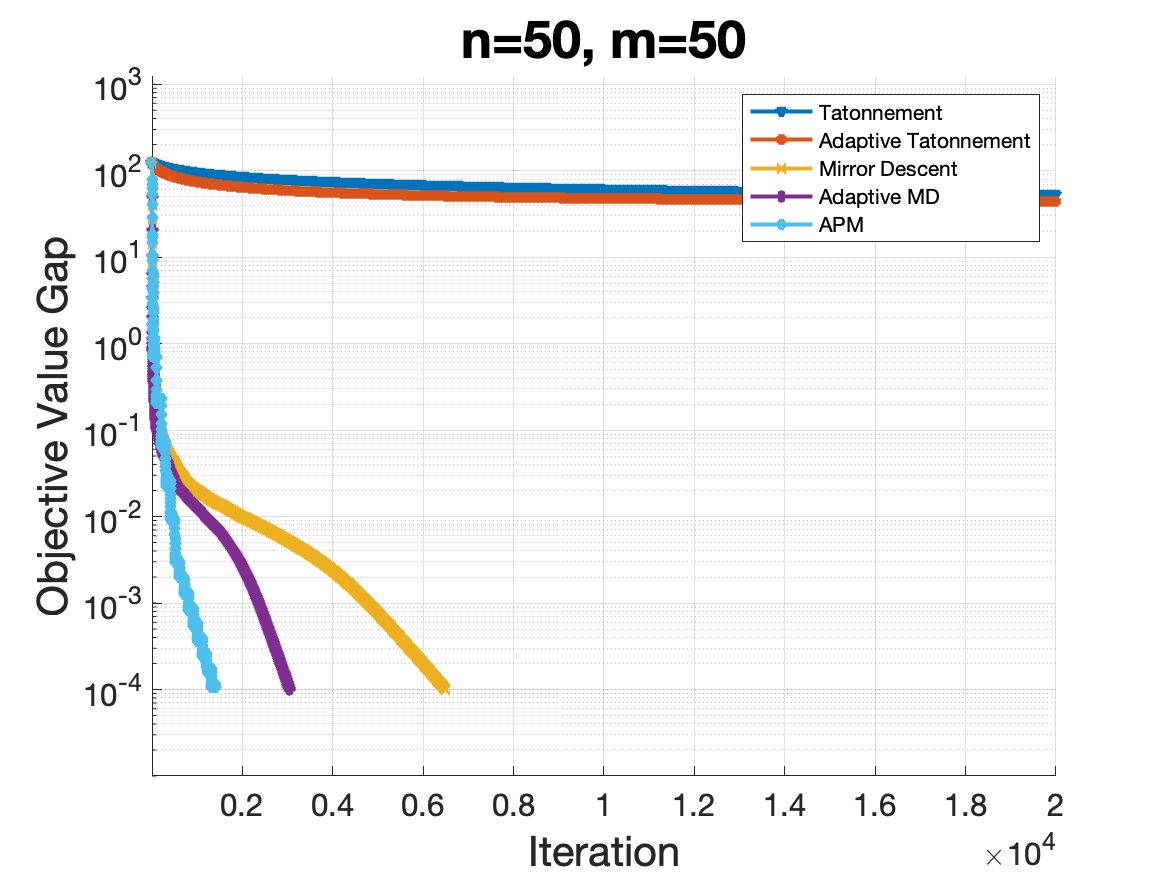} 
	\includegraphics[width=0.32\linewidth]{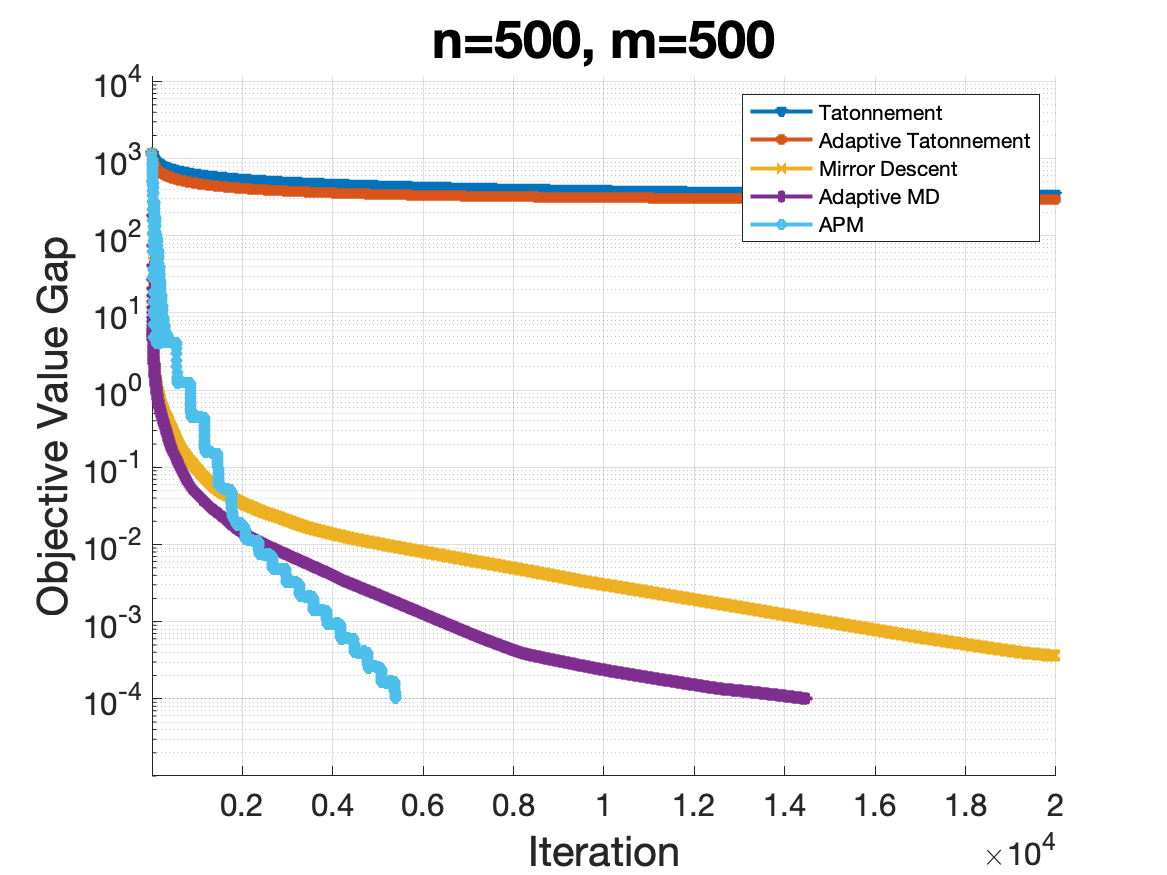} 
	\includegraphics[width=0.32\linewidth]{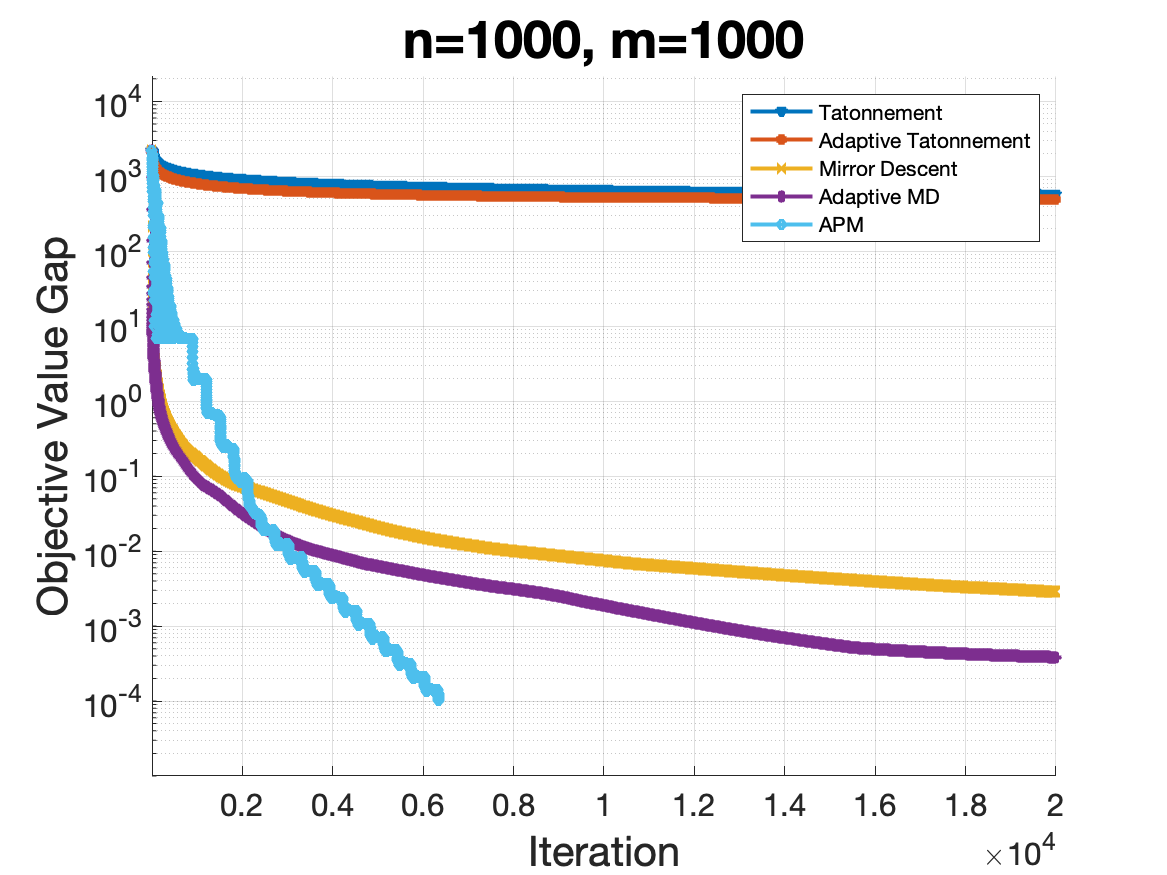} 
	\caption{Comparison of different algorithms for computing $\epsilon$-CE prices in \textit{quasi-linear} Fisher markets with exponential data, where the precision $\epsilon$ is $10^{-4}$.}
	\label{figure:exp_quasi}
\end{figure}

\begin{figure}[htbp]
\centering
 \includegraphics[width=0.35\linewidth]{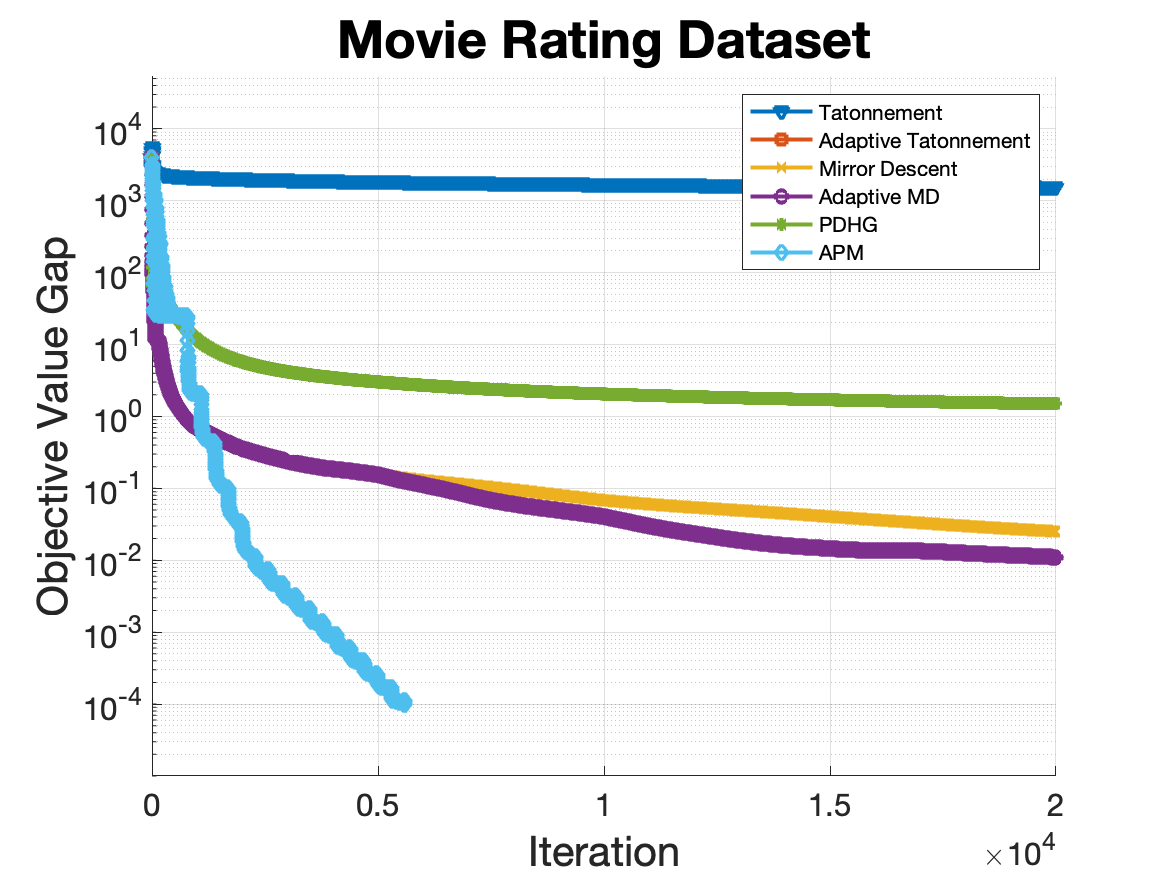} 
  \includegraphics[width=0.35\linewidth]{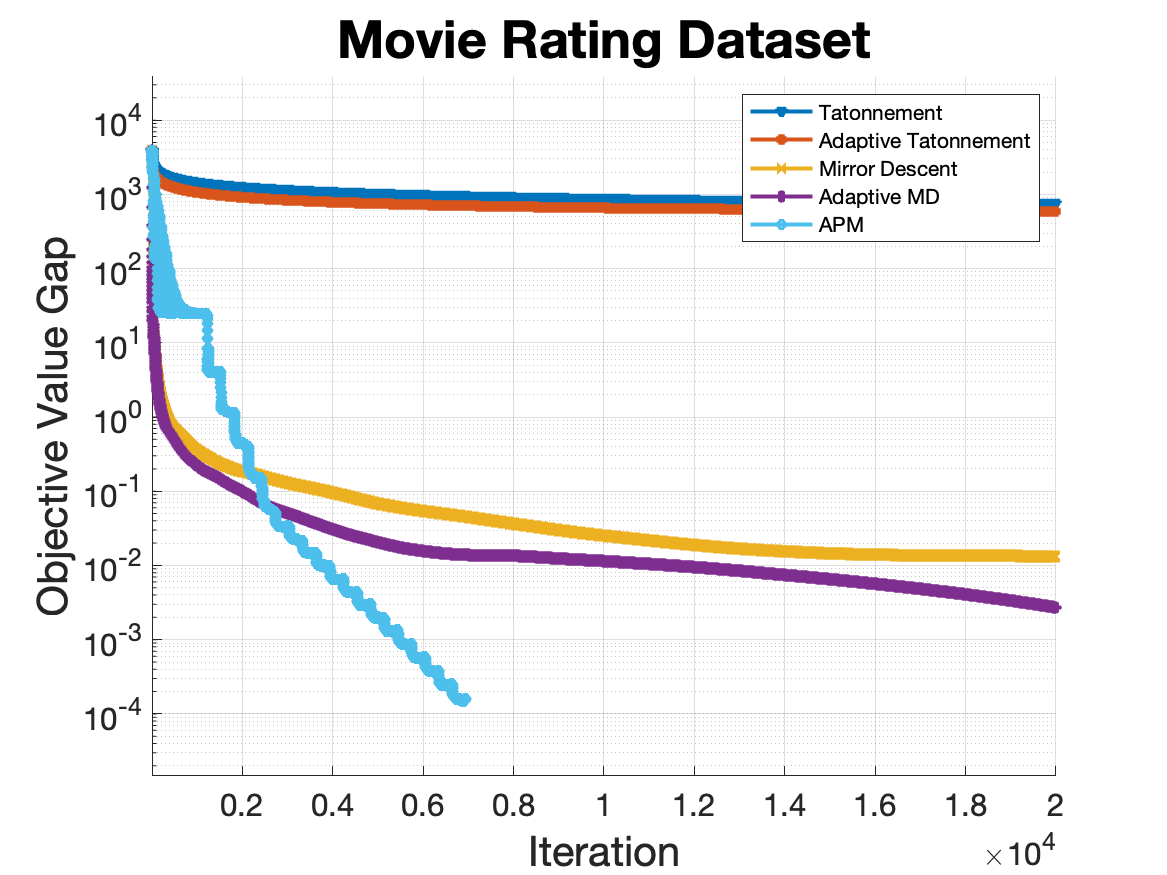} 
   \caption{Comparison of different algorithms for computing $\epsilon$-CE prices in \textit{linear} (left) and \textit{quasi-linear} (right) Fisher markets with movie rating dataset, where the precision $\epsilon$ is $10^{-4}$.} \label{figure:movie}
\end{figure}

To implement the mirror descent method to compute approximate CE prices, we use its iterates, i.e., the bid vectors, to recover prices based on their relations (see, e.g., \citep[Equation (2)]{birnbaum2011distributed}). Following \citet{gao2020first}, we set the stepsize of the mirror descent to $1$. The stepsize of additive t\^atonnement is set to $10^{-4}$. Adaptive stepsize strategy is also tested for them. Following \citet{huang2024restarted}, we use adaptive stepsize for PDHG based on the primal and dual residual. We set the stepsize of APM to $1/L$ and iteratively decrease $\delta$, where $L$ is the smoothness modulus of $F_{\delta}$. For synthetic data, the utility parameters $v_{ij}$, $i\in[n],j\in[m]$ are generated by distributions usually adopted in the literature. For real-world data, the utility matrix is constructed from a movie rating dataset \citep{Dooms2013MovieTweetings}, where each movie is treated as an item and each rater is treated as a buyer. The valuation of a buyer for an item is equal to the rating he gives to the movie. The utility matrix is completed using the matrix completion software \textit{fancyimpute}, which aligns with the data preprocessing approach of \citet{nan2024convergence}. The resulting instance has $n = 691$ buyers and $m = 632$ items.

The numerical results are reported in Figure \ref{figure:adaptive_md_linear}---\ref{figure:movie}. These figures show that for both synthetic and real-world data, the proposed APM converges much faster than other algorithms: When APM finds approximate CE prices of desired precision, the mirror descent, additive t\^atonnement, and PDHG only achieve a much lower precision. In most cases, APM only uses $\frac14$ to $\frac15$ of the iterations needed by other algorithms to find the desired approximate CE prices, demonstrating its efficiency. PDGH is not reported in the quasi-linear setting because the involved EG program requires CCNH conditions, which are not satisfied by quasi-linear utilities \cite[Sec. 1]{liu2025pdhcg}. We note that the objective function value gap of APM iterates oscillates in the early stage of the iterations. This can be attributed to the non-monotonicity of Nesterov's acceleration (see, e.g., \cite[Sec. 6.1.2]{dAspremont2021AccelerationMethods}). 

 \subsection{Adaptive APM for Computing Exact CE}
We run the adaptive APM until the stopping criterion, i.e., the optimality conditions of Problem \eqref{eq:QL-exp}, is satisfied within a tolerance of $10^{-8}$. We evaluate the iteration number and CPU time used by the adaptive APM. As a comparison, we also record the CPU time of the Mosek solver to compute CE, which solves \eqref{eq:Unify1} to output a solution with a precision of $10^{-6}$. Additionally, we compute the radius $\Delta^*$ based on the recovered CE prices. The experiments are conducted using both synthetic data (with $20$ repeated trials) and real-world data collected from the movie rating dataset.
\begin{figure}[htbp]
    \centering
        \includegraphics[width=0.35\textwidth]{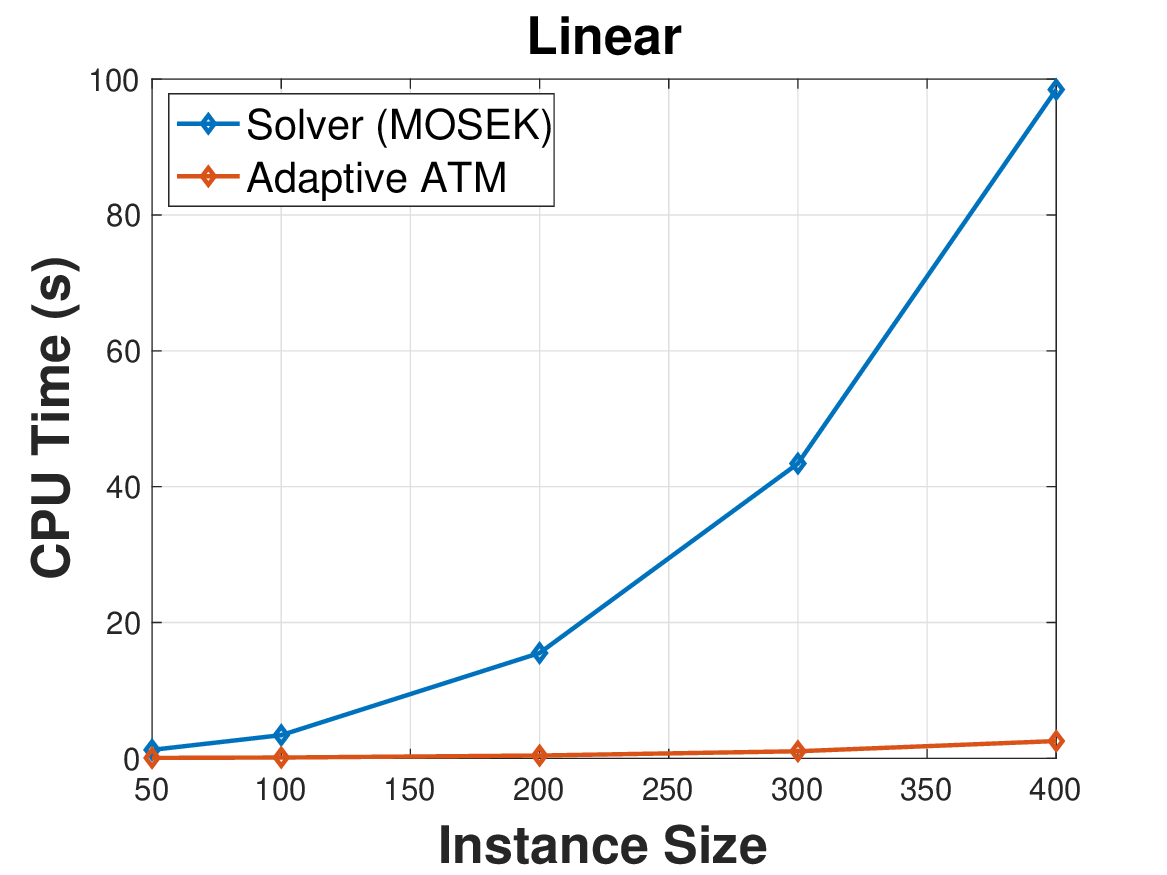} 
        \includegraphics[width=0.35\textwidth]{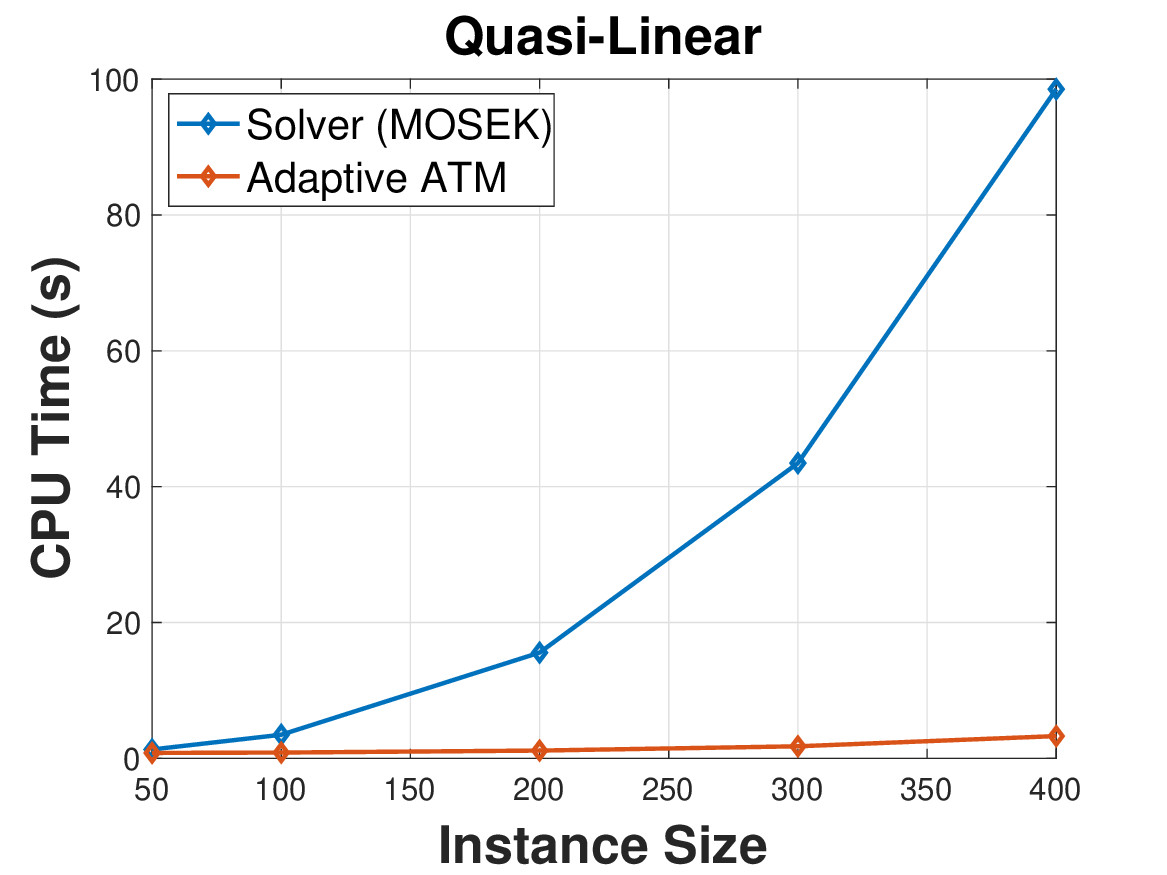} 
         \caption{Comparison of the CPU time of adaptive APM and Mosek solver for computing an exact CE with integer synthetic data. }
    \label{figure:exact_integer}
\end{figure}

\begin{table}[h]
\centering
\begin{tabular}{|c|c|c|c|c|c|c|}
\hline
\textbf{Data} & \textbf{$(n,m)$} & \textbf{Utility}& \textbf{Iteration} & \textbf{$\Delta^*$} & \textbf{CPU time (s)} &\textbf{Solver time (s)} \\ \hline
\multirow{8}*{Integer}       & \multirow{2}*{(50,50)} & Linear      & 2463               & 0.00131 & \textbf{0.0620}   &  1.2455    \\ \cline{3-7}
~       & ~             & Quasi-linear  & 3788       & 0.00015   & \textbf{0.0703}   &  1.1223    \\ \cline{2-7}
~       & \multirow{2}*{(100,100)}  & Linear   & 2925               & 0.00242   & \textbf{0.1186}    &  3.4148   \\ \cline{3-7}
~       & ~  & Quasi-linear   & 10219               & 0.00002 & \textbf{0.3302}    &  3.3930      \\ \cline{2-7}
~       & \multirow{2}*{(200,200)}  & Linear   & 10500 & 0.00012   & \textbf{0.4115}    &  15.5087    \\ \cline{3-7}
~       & ~  & Quasi-linear   & 4670              & 0.00450  & \textbf{0.1993}    &  16.0328    \\ \cline{2-7}
~       & \multirow{2}*{(300,300)}  & Linear   & 3296               & 0.00051    & \textbf{1.0407}    &  43.4084  \\ \cline{3-7}
~       & ~  & Quasi-linear   & 7600               & 0.00270    & \textbf{2.0301}    &  41.0377   \\ \cline{2-7}
~       & \multirow{2}*{(400,400)}  & Linear   & 9858               & 0.00032     & \textbf{2.5446}    &  98.4671 \\ \cline{3-7}
~       & ~  & Quasi-linear   & 7239               & 0.00334  & \textbf{2.3128}    &  102.9551     \\ \hline 
\multirow{8}*{Exp}       & \multirow{2}*{(50,50)} & Linear      & 1238               & 0.00272 & \textbf{0.0223}   &  1.2123    \\ \cline{3-7}
~       & ~             & Quasi-linear  & 7352       & 0.00005   & \textbf{0.1103}   &  0.9431    \\ \cline{2-7}
~       & \multirow{2}*{(100,100)}  & Linear   & 9937               & 0.00002   & \textbf{0.5103}    &  3.4121   \\ \cline{3-7}
~       & ~  & Quasi-linear   & 12130              & 0.00003 & \textbf{0.2702}    &  3.4235      \\ \cline{2-7}
~       & \multirow{2}*{(200,200)}  & Linear   & 60523 & 0.00001   & \textbf{2.1323}    &  15.1238    \\ \cline{3-7}
~       & ~  & Quasi-linear   & 2789              & 0.00023  & \textbf{0.1553}    &  16.4562    \\ \cline{2-7}
~       & \multirow{2}*{(300,300)}  & Linear   & 32391               & 0.00001    & \textbf{9.5784}    &  43.1021  \\ \cline{3-7}
~       & ~  & Quasi-linear   & 23123              & 0.00003    & \textbf{6.0347}    &  38.3302   \\ \cline{2-7}
~       & \multirow{2}*{(400,400)}  & Linear   & 29353               & 0.00004     & \textbf{8.4263}    &  102.3145 \\ \cline{3-7}
~       & ~  & Quasi-linear   & 47239               & 0.00003  & \textbf{12.5263}    &  105.9257     \\ \hline
\multirow{8}*{Logrnd}       & \multirow{2}*{(50,50)} & Linear      & 2990               & 0.00211 & \textbf{0.0551}   &  1.2353    \\ \cline{3-7}
~       & ~             & Quasi-linear  & 1278       & 0.00237   & \textbf{0.0302}   &  1.1201    \\ \cline{2-7}
~       & \multirow{2}*{(100,100)}  & Linear   & 3345               & 0.00312   & \textbf{0.1812}    &  3.5642   \\ \cline{3-7}
~       & ~  & Quasi-linear   & 3648               & 0.00212 & \textbf{0.2912}    &  3.4223      \\ \cline{2-7}
~       & \multirow{2}*{(200,200)}  & Linear   & 21324 & 0.00002   & \textbf{0.9243}    &  15.5890    \\ \cline{3-7}
~       & ~  & Quasi-linear   & 43679              & 0.00001  & \textbf{2.1345}    &  16.3213    \\ \cline{2-7}
~       & \multirow{2}*{(300,300)}  & Linear   & 44560               & 0.00002    & \textbf{14.1264}    &  42.9084  \\ \cline{3-7}
~       & ~  & Quasi-linear   & 23905               & 0.00004    & \textbf{9.5782}    &  41.0293   \\ \cline{2-7}
~       & \multirow{2}*{(400,400)}  & Linear   & 67390               & 0.00002     & \textbf{16.2356}    &  98.4131 \\ \cline{3-7}
~       & ~  & Quasi-linear   & 78924               & 0.00002  & \textbf{18.2317}    &  102.9953     \\ \hline
\multirow{2}*{\makecell{Movie\\ Rating} } &  \multirow{2}*{ (691,632)} & Linear & 17432 & $7.2\times10^{-5}$ & \textbf{2.5441}  & 272.3047\\  \cline{3-7}
~ &  ~ & Quasi-linear & 21328 & $2.4\times10^{-5}$ & \textbf{2.7332} & 267.1123\\ \hline
\end{tabular}
\vspace{1em}
\caption{Performance of the adaptive APM in computing an exact CE.}\label{table:exact_integer}
\end{table}
Figure \ref{figure:exact_integer} and Table \ref{table:exact_integer} show that the adaptive APM uses approximately $\frac{1}{100}$ to $\frac{1}{10}$ running time of the Mosek solver to compute CE in most of cases. Furthermore, as the instance size grows, the CPU time of the adaptive APM increases slightly, whereas that of the Mosek solver increases significantly. These results highlight the efficiency of adaptive APM. Additionally, we observe a roughly negative correlation between the iteration number and $\Delta^*$, consistent with our theory (i.e., Remark \ref{re:recovery3}). 

\section{Conclusion and Future Directions} \label{sec:conclusion}
In this paper, we proposed a unified strongly convex formulation for computing CE of linear and quasi-linear Fisher markets. Based on this formulation, we developed new price-adjustment processes to overcome the limitations of t\^atonnement methods. Specifically, by applying Nesterov's acceleration, we developed APM that predicts the future excess-supply to adjust prices. We proved that APM finds $\epsilon$-CE prices in $\tilde{\cO}(1/\sqrt{\epsilon})$ iterations, which significantly improves upon the iteration complexities of the t\^atonnement methods. Furthermore, we constructed a recovery oracle that maps approximate CE prices to exact CE prices at a low computational cost. By coupling this recovery oracle with price-adjustment processes, we derived adaptive price-adjustment methods and showed that they find CE in finite steps. Finally, we conducted numerical experiments to demonstrate the fast convergence of APM and the efficient recovery of CE for adaptive APM. 

Our developments suggest several directions for future research: (i) accelerating price-adjustment processes for other utilities; (ii) providing a conditional bound on the radius $\Delta^*$ for the recovery oracle; (iii) extending the recovery oracle to general market settings. Directions (i) and (iii) could contribute to the design of faster price-adjustment processes with finite-step convergence guarantees in general market settings. Direction (ii) may help develop a (conditional) polynomial-time complexity for adaptive APM to compute exact CE. We believe that exploring these directions will deepen our understanding of CE computation and advance its practical applications.

\bibliographystyle{plainnat}
\bibliography{ref}
\newpage
\appendix
The appendix is structured as follows. In Appendices \ref{appen:epsilon-CE} and \ref{appen:exact-CE}, we provide the missing proofs for Sec. \ref{sec:epsilon-CE} and \ref{sec:exact-CE}, respectively. In Appendix \ref{appen:test}, we show that the optimality test for Problem \eqref{eq:QL-exp} reduces to a max-flow problem and discuss its computational cost.

\section{Missing Proofs of Sec. \ref{sec:epsilon-CE}}\label{appen:epsilon-CE}
\subsection{Proof of Fact \ref{fact:f_delta}}
(i)\&(ii) Observe that the exponential sum $\sum_{j\in [m]}\exp(\mu_j)$ is $\exp({\bar{\mu}+\eta})$-smooth, $\exp({\underline{\mu}-\eta})$-strongly convex in the feasible region. Moreover, by \cite[p. 74]{boyd2004convex} and the composition rule, we see that the composition of linear and $\log$-sum-$\exp$ functions, i.e., $ \mu\mapsto\log(\sum_{j\in \{0\}\cup[m]}\exp({\frac{\log(v_{ij})-\mu_j}{\delta}}))$, is convex, with hessian smaller than $\frac{1}{\delta^2}\bI_m$.
It follows that the function $F_{\delta}$ is smooth with modulus $L=\exp({\bar{\mu}+\eta})+\|B\|_1/{\delta}$ and strongly convex with modulus $\sigma=\exp({\underline{\mu}-\eta})$. 

\noindent(iii) Notice that $0\geq\sum_{j \in [m] \cup \{ 0\}} \lambda_{ij}\log(\lambda_{ij})\geq-\log(m+1)$ for $\lambda_i\in\cD_{m+1}$, $i\in[n]$. We see from \eqref{eq:F_max_form} and \eqref{eq:F_delta_max} that $0\leq F_{\delta}-F\leq \delta\log(m+1) \|B\|_1$. 

\noindent(iv)  Let $\cJ_i(\mu)\coloneqq \argmax_{j\in\{0\}\cup[m]}\{\log(v_{ij})-\mu_j\}$ for $i\in[n]$, $\mu\in\R^m$,and recall that $\mu_0=\log(\alpha)$ is the parameter and $v_{i0}=1$. Direct computation gives \[\nabla F_{\delta}(\mu)\to d\coloneqq\left(\exp({\mu_1}),\ldots,\exp({\mu_m})\right)-\sum_{i\in[n]}\frac{B_i}{|\cJ_i(\mu)|}\sum_{j\in \cJ_i(\mu)}\e_j\in\partial F(\mu)\quad\text{ as }\quad\delta\to0.\]

\subsection{Proof of Theorem \ref{th:SAG}}\label{subsec:SAG}
Denote the optimal value of Problem \eqref{eq:box} (resp. \eqref{eq:box-smooth}) by $F^{*}$ (resp. $F^*_{\delta}$). Let $\hat{\mu}^{\delta}\coloneqq \argmin_{\mu\in\R^m}F_{\delta}(\mu)$.
We first prove the following lemma, which says that after the relaxation of the box constraints, the global minimizer of $F_{\delta}$, i.e., $\hat{\mu}^{\delta}$, is located in the new box $[(\underline{\mu}-\eta)\1_m,(\bar{\mu}+\eta)\1_m]$.
\begin{lemma}\label{le:stop2}
     Suppose that the parameters $\delta>0,\eta\geq0$ satisfy $2\delta\log(m+1)\|B\|_1\leq \exp({\underline{\mu}-\eta})\eta^2 $. Then, $\hat{\mu}^{\delta}$ is also the optimal solution of the box-constrained problem \eqref{eq:box-smooth}.
\end{lemma}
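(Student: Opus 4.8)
The plan is to show that the unconstrained minimizer $\hat{\mu}^{\delta}$ of $F_{\delta}$ cannot stray outside the enlarged box $[(\underline{\mu}-\eta)\1_m,(\bar{\mu}+\eta)\1_m]$, so that it coincides with the minimizer of the box-constrained problem \eqref{eq:box-smooth}. The natural strategy is a comparison argument: I know from \eqref{eq:mu} that the minimizer $\mu^*$ of the \emph{un}smoothed problem lies in the tighter box $[\underline{\mu}\1_m,\bar{\mu}\1_m]$, and by Fact \ref{fact:f_delta}(iii) the functions $F$ and $F_{\delta}$ differ uniformly by at most $\delta\log(m+1)\|B\|_1$. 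So $F_{\delta}(\mu^*) \le F(\mu^*) + \delta\log(m+1)\|B\|_1 = F^* + \delta\log(m+1)\|B\|_1$, which gives an upper bound on the optimal value $F_{\delta}(\hat{\mu}^{\delta})$. On the other hand, if $\hat{\mu}^{\delta}$ had some coordinate lying a distance more than $\eta$ from the tight box, I can lower-bound $F_{\delta}(\hat{\mu}^{\delta}) - F_{\delta}(\mu^*)$ from below using strong convexity, and derive a contradiction with the hypothesis $2\delta\log(m+1)\|B\|_1 \le \exp(\underline{\mu}-\eta)\eta^2$.

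Concretely, the steps I would carry out are as follows. First, record the two-sided bound $F^* \le F_{\delta}^{*} \le F^* + \delta\log(m+1)\|B\|_1$ (the left inequality is immediate from $F \le F_{\delta}$, the right from evaluating $F_{\delta}$ at $\mu^*$ and Fact \ref{fact:f_delta}(iii)). Second, suppose for contradiction that $\hat{\mu}^{\delta}$ lies outside $[(\underline{\mu}-\eta)\1_m,(\bar{\mu}+\eta)\1_m]$; then its Euclidean distance to the tight box $[\underline{\mu}\1_m,\bar{\mu}\1_m]$ — equivalently $\|\hat{\mu}^{\delta} - \Pi_{[\underline{\mu},\bar{\mu}]}(\hat{\mu}^{\delta})\|_2$ — exceeds $\eta$. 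Third, invoke $\sigma$-strong convexity of $F_{\delta}$ on the relaxed box (Fact \ref{fact:f_delta}(ii), with $\sigma = \exp(\underline{\mu}-\eta)$): since $\hat{\mu}^{\delta}$ is the global minimizer, for any point $z$ in the box one has $F_{\delta}(z) - F_{\delta}(\hat{\mu}^{\delta}) \ge \tfrac{\sigma}{2}\|z - \hat{\mu}^{\delta}\|_2^2$. Taking $z$ to be the projection of $\hat{\mu}^{\delta}$ onto the tight box (which lies in the relaxed box, and in fact in $[\underline{\mu}\1_m,\bar{\mu}\1_m]$, so $F_{\delta}(z) \le F^* + \delta\log(m+1)\|B\|_1$ using the argument above applied at $z$ — or more simply, $F(z)$ compared to $F(\mu^*)$, being careful here), I get $\tfrac{\sigma}{2}\eta^2 < \tfrac{\sigma}{2}\|z-\hat\mu^{\delta}\|_2^2 \le F_{\delta}(z) - F_{\delta}(\hat{\mu}^{\delta}) \le \delta\log(m+1)\|B\|_1$, contradicting the hypothesis $2\delta\log(m+1)\|B\|_1 \le \sigma\eta^2$. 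Hence $\hat{\mu}^{\delta}$ lies in the relaxed box, and being the unconstrained minimizer it is a fortiori the constrained minimizer of \eqref{eq:box-smooth}.

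The main obstacle — and the step requiring care — is the third one: bounding $F_{\delta}(z) - F_{\delta}(\hat{\mu}^{\delta})$ from above when $z$ is the projection of $\hat\mu^\delta$ onto the tight box. The clean way is to note $F_{\delta}(z) \le F(z) + \delta\log(m+1)\|B\|_1$ and $F(z) \le F(\mu^*) = F^{*}$? — but that last inequality is false in general, since $z$ need not minimize $F$. The correct route is instead to compare against $\mu^*$ directly: $F_{\delta}(\hat\mu^\delta) \le F_{\delta}(\mu^*) \le F^* + \delta\log(m+1)\|B\|_1 \le F(\hat\mu^\delta) + \delta\log(m+1)\|B\|_1$, so $F_{\delta}(\hat\mu^\delta) - F(\hat\mu^\delta) \le \delta\log(m+1)\|B\|_1$, and one uses that $F$ attains its min over $\R^m$ inside the tight box together with the quadratic-growth/strong-convexity lower bound for $F$ (or $F_\delta$) to force $\hat\mu^\delta$ close to that box. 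I would set up the chain of inequalities so that the "cheap" direction ($F_\delta \ge F$ globally, $F_\delta(\mu^*) \le F^* + \delta\log(m+1)\|B\|_1$) does the work and strong convexity converts the resulting $O(\delta)$ gap into an $O(\sqrt\delta)$ bound on the distance, which the hypothesis pins down to be at most $\eta$.
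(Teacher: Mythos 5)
Your overall strategy (argue by contradiction, compare $F_{\delta}(\hat{\mu}^{\delta})\le F_{\delta}(\mu^*)\le F^*+\delta\log(m+1)\|B\|_1$, then use strong convexity to bound the distance) is the same as the paper's, and you correctly diagnose that the projection-based variant of step three fails. But the step you flag as "requiring care" is left genuinely unresolved, and it is the crux. The difficulty is that neither $F$ nor $F_{\delta}$ is globally $\sigma$-strongly convex with $\sigma=\exp(\underline{\mu}-\eta)$: the modulus at a point $\mu$ is governed by $\exp(\min_j\mu_j)$, which degenerates as coordinates tend to $-\infty$. So writing $F(\hat{\mu}^{\delta})-F(\mu^*)\ge\tfrac{\sigma}{2}\|\hat{\mu}^{\delta}-\mu^*\|_2^2$ (or the analogous inequality for $F_{\delta}$ between a point in the box and $\hat{\mu}^{\delta}$ outside it, as in your first attempt) is circular: the modulus $\sigma$ is only valid once you already know $\hat{\mu}^{\delta}_j\ge\underline{\mu}-\eta$, which is precisely the claim. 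Your closing sentence, "strong convexity converts the $O(\delta)$ gap into an $O(\sqrt{\delta})$ bound on the distance," silently assumes the global modulus and therefore does not close the argument.

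The paper breaks this circularity with an intermediate point: assuming $\|\hat{\mu}^{\delta}-\mu^*\|_2>\eta$, it picks $\mu^s=s\hat{\mu}^{\delta}+(1-s)\mu^*$ with $\|\mu^s-\mu^*\|_2=\eta$ exactly. Since $\mu^*_j\ge\underline{\mu}$ and $\|\mu^s-\mu^*\|_\infty\le\eta$, the whole segment $[\mu^*,\mu^s]$ lies in $[(\underline{\mu}-\eta)\1_m,+\infty)$, where $F$ \emph{is} $\sigma$-strongly convex; convexity of $F$ and minimality of $\mu^*$ give $F(\mu^s)<F(\hat{\mu}^{\delta})$, so $\tfrac{\sigma}{2}\eta^2\le F(\mu^s)-F(\mu^*)<F(\hat{\mu}^{\delta})-F(\mu^*)\le\delta\log(m+1)\|B\|_1$, contradicting the hypothesis. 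You would need this (or an equivalent localization device) to complete your proof. A minor additional slip: your displayed chain ends with "$F_{\delta}(\hat{\mu}^{\delta})-F(\hat{\mu}^{\delta})\le\delta\log(m+1)\|B\|_1$," which is just Fact \ref{fact:f_delta}(iii) and carries no information; the inequality you actually need from that chain is $F(\hat{\mu}^{\delta})-F(\mu^*)\le\delta\log(m+1)\|B\|_1$, obtained via $F(\hat{\mu}^{\delta})\le F_{\delta}(\hat{\mu}^{\delta})\le F_{\delta}(\mu^*)$.
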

\begin{proof}
It suffices to show that $\underline{\mu}-\eta\leq\hat{\mu}^{\delta}_j\leq \bar{\mu}+\eta$, $j\in[m]$. Recall that $\mu^*\coloneqq \arg\min_{\mu\in\R^m}F(\mu)$ satisfies $\mu^*_{j}\in[\underline{\mu},\bar{\mu}]$, $j\in[m]$. We only need to prove the inequality $\|\hat{\mu}^{\delta}-\mu^*\|_2\leq \eta$.

    Let us prove $\|\hat{\mu}^{\delta}-\mu^*\|_2\leq \eta$ by contradiction. Suppose the converse that $\|\hat{\mu}^{\delta}-\mu^*\|_2> \eta$. Then, there exists $s\in(0,1)$ such that the vector $\mu^s\coloneqq s\hat{\mu}^{\delta}+(1-s)\mu^*$ satisfies $\|\mu^s-\mu^*\|_2=\eta$. It follows from the strong convexity of $F$ and $\mu^*= \arg\min_{\mu\in\R^m}F(\mu)$  that 
    \begin{equation}\label{eq:strict}
        F(\mu^s)<s F(\hat{\mu}^{\delta})+(1-s)F(\mu^*)<F(\hat{\mu}^{\delta}).
    \end{equation}
    Further, notice that $\mu^s_j\geq \underline{\mu}-\eta$, $\mu^*_j\geq\underline{\mu}$ for $j\in[m]$, and $F$ is $\exp({\underline{\mu}-\eta})$-strongly convex on $[(\underline{\mu}-\eta)\1_m,+\infty)$. We have
    \begin{equation}\label{eq:new_bound}
 \frac{\exp({\underline{\mu}-\eta})}{2}\|\mu^s-\mu^*\|_2^2 \leq F(\mu^s)-F(\mu^*) < F(\hat{\mu}^{\delta})-F(\mu^*),\end{equation}
 where the second inequality is due to \eqref{eq:strict}.
 
 On the other hand, by Fact \ref{fact:f_delta} (iii) and the definition $\hat{\mu}^{\delta}=\argmin_{\mu\in\R^m}F_{\delta}(\mu)$, we have
 \begin{equation}\label{eq:strict2}
     F(\hat{\mu}^{\delta})-F(\mu^*)\leq F_{\delta}(\hat{\mu}^{\delta})-F(\mu^*)\leq  F_{\delta}(\mu^*)-F(\mu^*)    \leq \delta\log(m+1)\|B\|_1.
 \end{equation}
 Combining \eqref{eq:strict} and \eqref{eq:strict2}, and recalling $\|\mu^s-\mu^*\|_2=\eta$, we see that
 \begin{equation}\label{eq:contra}
     \frac{\exp({\underline{\mu}-\eta})}{2} \eta^2< \delta\log(m+1)\|B\|_1, 
 \end{equation}
 which contradicts the condition $2\delta\log(m+1)\|B\|_1\leq {\exp({\underline{\mu}-\eta})} \eta^2 $. We complete the proof.
\end{proof}
By Lemma \ref{le:stop2} and the first-order optimality of unconstrained problems, we know that the optimal solution of \eqref{eq:box-smooth}, i.e., $\hat{\mu}^{\delta}$, satisfies $\nabla F_{\delta}(\hat{\mu}^{\delta})=0$. This, together with \citet[Theorem 2.1.10]{Nesterov1983AMF} and \citet[Theorem 2.1.5]{Nesterov1983AMF}, implies the following corollary, which serves as the basis of the stopping criterion of APM. 
\begin{lemma}\label{le:stop}
Suppose that the parameters $\delta,\eta$ in Problem \eqref{eq:box-smooth} satisfy $2\delta\log(m+1)\|B\|_1\leq {\exp({\underline{\mu}-\eta})} \eta^2 $.
    Then, for all $\mu\in\R^m$, the following hold:
    \begin{enumerate}[{\rm (i)}, itemsep=1.5pt, leftmargin=*]
        \item $F_{\delta}(\mu)-F^*_{\delta}\leq \frac{1}{2\sigma}\|\nabla F_{\delta}(\mu)\|^2_2$;
        \item $F_{\delta}(\mu)-F^*_{\delta}\geq \frac{1}{2L}\|\nabla F_{\delta}(\mu)\|^2_2$.
    \end{enumerate}
\end{lemma}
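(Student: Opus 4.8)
The plan is to obtain (i) as the gradient‑domination inequality coming from strong convexity and (ii) as the descent‑lemma inequality coming from smoothness, i.e.\ exactly \citet[Theorems 2.1.5 and 2.1.10]{Nesterov1983AMF}; the only preliminary that needs an argument is that the optimal value $F^*_{\delta}$ of \eqref{eq:box-smooth} is attained at a point where $\nabla F_{\delta}$ vanishes. First I would invoke Lemma \ref{le:stop2}: under the standing hypothesis $2\delta\log(m+1)\|B\|_1\le\exp({\underline{\mu}-\eta})\eta^2$, it shows (via the bound $\|\hat{\mu}^{\delta}-\mu^*\|_2\le\eta$ obtained in its proof, where $\hat{\mu}^{\delta}=\argmin_{\mu\in\R^m}F_{\delta}(\mu)$ and $\mu^*\in[\underline{\mu}\1_m,\bar{\mu}\1_m]$) that $\hat{\mu}^{\delta}$ lies in the relaxed box and is the optimal solution of \eqref{eq:box-smooth}. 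Hence $F^*_{\delta}=F_{\delta}(\hat{\mu}^{\delta})$, and since $\hat{\mu}^{\delta}$ minimizes the differentiable function $F_{\delta}$ over all of $\R^m$, first‑order optimality gives $\nabla F_{\delta}(\hat{\mu}^{\delta})=0$.

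\textbf{The two inequalities.} For (i), I would fix $\mu$ in the box; then the segment $[\mu,\hat{\mu}^{\delta}]$ stays in the box, so $\sigma$‑strong convexity of $F_{\delta}$ there (Fact \ref{fact:f_delta}(ii)) gives $F^*_{\delta}=F_{\delta}(\hat{\mu}^{\delta})\ge F_{\delta}(\mu)+\langle\nabla F_{\delta}(\mu),\hat{\mu}^{\delta}-\mu\rangle+\tfrac{\sigma}{2}\|\hat{\mu}^{\delta}-\mu\|_2^2$; lower‑bounding the right side by the minimum $F_{\delta}(\mu)-\tfrac1{2\sigma}\|\nabla F_{\delta}(\mu)\|_2^2$ of this quadratic in the displacement variable (attained at $-\sigma^{-1}\nabla F_{\delta}(\mu)$) and rearranging yields (i). For (ii), I would apply the descent lemma from $L$‑smoothness (Fact \ref{fact:f_delta}(i)) at the gradient‑step point $\mu^+=\mu-L^{-1}\nabla F_{\delta}(\mu)$, together with $F^*_{\delta}\le F_{\delta}(\mu^+)$, to get $F^*_{\delta}\le F_{\delta}(\mu^+)\le F_{\delta}(\mu)-\tfrac1{2L}\|\nabla F_{\delta}(\mu)\|_2^2$, which rearranges to (ii). Both are short computations once $\nabla F_{\delta}(\hat{\mu}^{\delta})=0$ is in hand.

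\textbf{Main obstacle.} The only subtle point is the domain of validity. Fact \ref{fact:f_delta} supplies $\sigma$‑strong convexity and $L$‑smoothness of $F_{\delta}$ only on the relaxed box $[(\underline{\mu}-\eta)\1_m,(\bar{\mu}+\eta)\1_m]$ — and $F_{\delta}$, a sum of exponentials and a log‑sum‑exp term, is genuinely neither globally strongly convex nor globally $L$‑smooth — so the statement, though written for all $\mu\in\R^m$, should be read with $\mu$ restricted to that box, which is all the sequel needs because the projection in the gradient step keeps every APM iterate there. Inequality (i) is then immediate since $[\mu,\hat{\mu}^{\delta}]$ stays in the box. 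For (ii) one must check that the descent lemma is legitimate along $[\mu,\mu^+]$; since for a box point $\mu$ the gradient step overshoots the box by at most $L^{-1}\|B\|_1\le\delta$ (upward) and $L^{-1}\exp({\bar{\mu}+\eta})<1$ (downward) in each coordinate, this segment lies in a marginally larger box on which $F_{\delta}$ is smooth with a modulus exceeding $L$ by only $O(\delta)=O(\epsilon)$ — a gap that is dominated by the $\delta\log(m+1)\|B\|_1$ approximation error and hence harmless in every downstream use. Sorting out this domain bookkeeping, not any nontrivial estimate, is the real content of the proof.
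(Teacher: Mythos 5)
Your proposal matches the paper's proof: the paper likewise invokes Lemma \ref{le:stop2} to conclude $\nabla F_{\delta}(\hat{\mu}^{\delta})=0$ and then simply cites \citet[Theorems 2.1.5 and 2.1.10]{Nesterov1983AMF}, which are exactly the two inequalities you derive from $L$-smoothness (descent lemma at $\mu^{+}=\mu-L^{-1}\nabla F_{\delta}(\mu)$) and $\sigma$-strong convexity (gradient domination). Your additional bookkeeping about the domain on which the moduli are valid addresses a point the paper silently glosses over (the lemma is stated ``for all $\mu\in\R^m$'' even though Fact \ref{fact:f_delta} certifies $\sigma$ and $L$ only on the relaxed box), so that discussion is a reasonable refinement rather than a deviation.
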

Lemma \ref{le:stop} ensures the validity of the stopping criterion of APM. Specifically, if the stopping criterion $\|\nabla F_{\delta}(\mu^{{t}})\|_2\leq\min\{\sigma\epsilon,\sqrt{\sigma\epsilon}\}$ holds for $t=\bar{t}\geq0$, then Lemma \ref{le:stop} (i) implies \[ F_{\delta}\left(\mu^{\bar{t}}\right)-F^*_{\delta}\leq \frac{\epsilon}{2}. \]
Observe that $F(\mu^{\bar{t}})-F_{\delta}(\mu^{\bar{t}})\leq0$ by Fact \ref{fact:f_delta} (iii); $F_{\delta}^{*}\leq F_{\delta}(\mu^*)$ since $\mu^*\in[\underline{\mu},\bar{\mu}]^m$ is a feasible point of \eqref{eq:box-smooth} while $F_{\delta}^{*}$ is its optimal value; and  $F^*=F(\mu^*)$ by definition.
We further have
\begin{equation}\label{eq:criterion}
    \begin{aligned}
F\left(\mu^{\bar{t}}\right)-F^{*} &= F\left(\mu^{\bar{t}}\right)
        -F_{\delta}\left(\mu^{\bar{t}}\right)+F_{\delta}\left(\mu^{\bar{t}}\right)-F_{\delta}^{*}+F_{\delta}^{*}-F^{*}    \\
        &\leq 0+\frac{\epsilon}{2}+F_{\delta}(\mu^*)-F(\mu^*)\\
        &\leq \frac{\epsilon}{2}+\delta\log(m+1)\|B\|_1\\
        &=\frac{\epsilon}{2}+\frac{\epsilon}{2}=\epsilon,
\end{aligned}
\end{equation}
where the second inequality is due to Fact \ref{fact:f_delta} (iii) and the parameter choice $\delta= \frac{\epsilon}{2\log(m+1)\|B\|_1}$.
        
Given \eqref{eq:criterion}, we conclude that the stopping criterion ensures that the output $\mu^{\bar{t}}$ recovers an $\epsilon$-CE price vector.
Let us then focus on the lower bound on $\bar{t}$ that guarantees the stopping criterion $\|\nabla F_{\delta}(\mu^{\bar{t}})\|_2\leq\min\{\sigma\epsilon,\sqrt{\sigma\epsilon}\}$. By Lemma \ref{le:stop} (ii), it suffices to have
\begin{equation}\label{eq:cri2}
     F_{\delta}\left(\mu^{\bar{t}}\right)-F^*_{\delta}\leq \min\left\{\frac{\sigma^2\epsilon^2}{2L},\frac{\sigma\epsilon}{2L}\right\}=\frac{\delta}{2(\delta\exp({\bar{\mu}+\eta})+\|B\|_1)}\cdot \min\left\{\sigma^2\epsilon^2, \sigma\epsilon \right\}=\cO(\epsilon^3).
\end{equation}
Notably, it still reduces to the function value gap $F_{\delta}-F^*_{\delta}$. As our APM can be viewed as Nesterov's acceleration method, i.e., \citet[Scheme 2.3.13]{nesterov2013introductory}, for the $\sigma$-strongly convex, $L$-smooth Problem \eqref{eq:box-smooth}, the following result applies.
\begin{lemma}{\rm(\cite[Theorem 2.3.6]{nesterov2013introductory}).}
    Let $f$ be an $L_f$-smooth $\sigma_f$-strongly convex function with $L_f,\sigma_f>0$. Let $q_f=\frac{\sigma_f}{L_f}\in(0,1)$ and $f^*=\min_{y\in\mathcal{Y}} f(y)$, where $\mathcal{Y}\subseteq\R^m$ is the feasible set. The iterates of Nesterov's acceleration method, denoted by $\{y^t\}_{t\geq0}$, satisfy
\[f\left(y^t\right)-f^{*} \leq\left(1-\sqrt{q_f}\right)^t\left(f\left(y^0\right)-f^{*}\right),\quad\forall\ t\geq0.\]\label{lm:nestrerov}
\end{lemma}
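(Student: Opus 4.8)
\emph{Proof proposal.} The plan is to establish this classical fact via Nesterov's \emph{estimate-sequence} construction, which is the argument underlying \cite[Theorem 2.3.6]{nesterov2013introductory}; I would carry it out in the form needed for the (box-)constrained scheme driving APM, i.e.\ with the plain gradient replaced by the gradient mapping $g_{L_f}(\xi)\coloneqq L_f\big(\xi-\Pi_{\mathcal Y}(\xi-\tfrac1{L_f}\nabla f(\xi))\big)$ (for $\mathcal Y=\R^m$ this is just $\nabla f(\xi)$). Recall that an estimate sequence for $f$ is a pair $\{\phi_t,\lambda_t\}_{t\ge0}$ with $\lambda_t\to0$ such that (E1) $\phi_t(y)\le(1-\lambda_t)f(y)+\lambda_t\phi_0(y)$ for all $y$, and (E2) $f(y^t)\le\phi_t^\ast\coloneqq\min_y\phi_t(y)$. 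Granting (E1)--(E2), evaluating (E1) at $y=y^\ast$ and using (E2) immediately yields $f(y^t)-f^\ast\le\lambda_t\big(\phi_0(y^\ast)-f^\ast\big)$, so everything reduces to building such a sequence with $\lambda_t=(1-\sqrt{q_f})^t$.

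First I would fix $\phi_0(y)=f(y^0)+\tfrac{\sigma_f}{2}\norm{y-y^0}^2$ (a quadratic with leading coefficient $\gamma_0=\sigma_f$ and center $v_0=y^0$), $\lambda_0=1$, and construct the sequence recursively by $\lambda_{t+1}=(1-\alpha_t)\lambda_t$ and
\[
\phi_{t+1}(y)=(1-\alpha_t)\phi_t(y)+\alpha_t\Big(f(\xi_t)+\langle g_t,y-\xi_t\rangle+\tfrac{\sigma_f}{2}\norm{y-\xi_t}^2\Big),
\]
where $\xi_t$ is the extrapolated point at which the gradient step is taken and $g_t\coloneqq g_{L_f}(\xi_t)$. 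Since $\sigma_f$-strong convexity makes the bracketed term a global lower bound for $f$, property (E1) propagates by induction. A short computation shows each $\phi_{t+1}$ is again a quadratic: its leading coefficient obeys $\gamma_{t+1}=(1-\alpha_t)\gamma_t+\alpha_t\sigma_f$, its center $v_{t+1}$ is an explicit combination of $v_t$, $\xi_t$, $g_t$, and its minimum satisfies a recursion of the shape
\[
\phi_{t+1}^\ast=(1-\alpha_t)\phi_t^\ast+\alpha_t f(\xi_t)-\frac{\alpha_t^2}{2\gamma_{t+1}}\norm{g_t}^2+\frac{\alpha_t(1-\alpha_t)\gamma_t}{\gamma_{t+1}}\Big(\tfrac{\sigma_f}{2}\norm{\xi_t-v_t}^2+\langle g_t,v_t-\xi_t\rangle\Big).
\]

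The crux is verifying (E2) by induction, and this is where the scheme's parameters get pinned down. Assuming $f(y^t)\le\phi_t^\ast$, I would invoke the sufficient-decrease property of the (projected) gradient step, $f(y^{t+1})\le f(\xi_t)-\tfrac{1}{2L_f}\norm{g_t}^2$ (the standard gradient-mapping descent lemma; an identity when $\mathcal Y=\R^m$), substitute it together with $\phi_t^\ast\ge f(y^t)$ into the recursion above, and then \textbf{(a)} choose $\alpha_t$ so that $L_f\alpha_t^2=\gamma_{t+1}$, which cancels $-\tfrac{\alpha_t^2}{2\gamma_{t+1}}\norm{g_t}^2$ against $-\tfrac{1}{2L_f}\norm{g_t}^2$, and \textbf{(b)} choose $\xi_t$ to annihilate the residual term linear in $g_t$, which is precisely what forces $\xi_t=\big(\alpha_t\gamma_t v_t+\gamma_{t+1}y^t\big)/(\gamma_t+\alpha_t\sigma_f)$. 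With these choices one reads off $f(y^{t+1})\le\phi_{t+1}^\ast$, closing the induction.

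It remains to specialize and conclude. With $\gamma_0=\sigma_f$, the pair of recursions $\gamma_{t+1}=(1-\alpha_t)\gamma_t+\alpha_t\sigma_f$ and $L_f\alpha_t^2=\gamma_{t+1}$ admits the fixed point $\gamma_t\equiv\sigma_f$, $\alpha_t\equiv\sqrt{q_f}$; hence $\lambda_t=(1-\sqrt{q_f})^t$, and substituting $\gamma_t\equiv\sigma_f$, $\alpha_t\equiv\sqrt{q_f}$ into the formulas for $\xi_t$ and $v_{t+1}$ and eliminating $v_t$ in favor of $y^t,y^{t-1}$ collapses the update to the two-point recursion $y^{t}=y^{t}_{\mathrm{grad}}+\tfrac{1-\sqrt{q_f}}{1+\sqrt{q_f}}(y^{t}_{\mathrm{grad}}-y^{t-1}_{\mathrm{grad}})$ — exactly the acceleration step of APM. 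Plugging $\lambda_t=(1-\sqrt{q_f})^t$ and $\phi_0(y^\ast)=f(y^0)+\tfrac{\sigma_f}{2}\norm{y^0-y^\ast}^2$ into $f(y^t)-f^\ast\le\lambda_t(\phi_0(y^\ast)-f^\ast)$, and bounding $\tfrac{\sigma_f}{2}\norm{y^0-y^\ast}^2\le f(y^0)-f^\ast$ by $\sigma_f$-strong convexity at the minimizer, yields $f(y^t)-f^\ast\le(1-\sqrt{q_f})^t\big(f(y^0)-f^\ast\big)$ up to the harmless universal factor $\phi_0(y^\ast)-f^\ast\le2(f(y^0)-f^\ast)$, which the downstream $\tilde{\cO}$ analysis absorbs — giving the assertion. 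I expect step \textbf{(b)} to be the main obstacle: simultaneously reconciling the estimate-sequence minimum recursion with the gradient-step decrease so that the extrapolation coefficient and the center update become mutually consistent (in particular in the constrained case through the gradient mapping); once the fixed point $\gamma_t\equiv\sigma_f$ is identified, the remainder is bookkeeping.
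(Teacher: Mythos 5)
This lemma is not proved in the paper at all --- it is quoted verbatim as a known result from Nesterov's book --- so the relevant comparison is with the textbook proof, and your estimate-sequence reconstruction is indeed exactly that argument: the recursions for $\gamma_t$, $\alpha_t$, $v_t$, the choice $L_f\alpha_t^2=\gamma_{t+1}$, the fixed point $\gamma_t\equiv\sigma_f$, $\alpha_t\equiv\sqrt{q_f}$, and the collapse to the two-point momentum recursion are all correct and are what underlies the cited theorem and the APM acceleration step.

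Two points deserve correction. First, your assertion that ``$\sigma_f$-strong convexity makes the bracketed term a global lower bound for $f$'' is false once $g_t$ is the gradient mapping rather than $\nabla f(\xi_t)$: the gradient mapping is not a subgradient of $f$ at $\xi_t$, so $f(y)\ge f(\xi_t)+\langle g_t,y-\xi_t\rangle+\tfrac{\sigma_f}{2}\|y-\xi_t\|^2$ need not hold. The constrained construction must instead build $\phi_{t+1}$ from the surrogate $f(y^{t+1})+\tfrac{1}{2L_f}\|g_t\|^2+\langle g_t,y-\xi_t\rangle+\tfrac{\sigma_f}{2}\|y-\xi_t\|^2$ and invoke Nesterov's gradient-mapping inequality (the analogue of your descent step) to certify it as a lower bound on $\mathcal Y$; the rest of the bookkeeping then goes through unchanged, and in the unconstrained case the two constructions coincide. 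This matters here because APM's gradient step is a genuine projection onto the box, so the projection can be active along the iterates even though the minimizer is interior. Second, your argument delivers $f(y^t)-f^*\le\lambda_t\bigl(\phi_0(y^*)-f^*\bigr)\le 2(1-\sqrt{q_f})^t\bigl(f(y^0)-f^*\bigr)$, i.e.\ the lemma only up to a factor of $2$; you are right that this is absorbed inside the logarithm in \eqref{eq:iteration_number} and leaves Theorem \ref{th:SAG} intact, but it does mean the lemma as transcribed in the paper (with constant exactly $f(y^0)-f^*$) is slightly stronger than what the estimate-sequence proof --- or, for that matter, the cited theorem, whose constant is $f(y^0)-f^*+\tfrac{\gamma_0}{2}\|y^0-y^*\|^2$ --- actually yields.
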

Before applying Lemma \ref{lm:nestrerov}, we estimate the condition number $q=\sigma/L$ for the function $F_{\delta}$, which is key to determining the convergence rate of APM. Let us define the constants 
\[c_1=F_{\delta}\left(\mu_0\right)-F_{\delta}^{*}, \quad c_2=\frac{ \exp({\underline{\mu}-\eta}) }{3 \log(m+1) \|B\|_1^{2}},\quad \text{and}\quad c_3=\frac13\exp({\underline{\mu}-\bar{\mu}-2\eta}).\]
For the estimate of $q$, there are two cases based on the range of $\epsilon$.
The first case is  $\epsilon\leq \exp({-\bar{\mu}-\eta})\log(m+1)\|B\|_1^{2}$. For this case, one has 
\begin{equation}\label{eq:q}
    \begin{aligned}
  q=  {\frac{\exp({\underline{\mu}-\eta})}{\exp({\bar{\mu}+\eta})+\frac{1}{\delta}\|B\|_1}} = 
{\frac{\exp({\underline{\mu}-\eta}){\epsilon}}{\exp({\bar{\mu}+\eta}){\epsilon}+{2\log(m+1)}\|B\|_1^{2}}}  \geq{\frac{\exp({\underline{\mu}-\eta}){\epsilon}}{{3\log(m+1)}\|B\|_1^{2}}} =c_2\epsilon.
    \end{aligned}
\end{equation}
\text{For the other case that}  $\epsilon> \exp({-\bar{\mu}-\eta})\log(m+1)\|B\|_1^{2}$, the monotonic increasing property of the function $y\mapsto\frac{y}{a_1y+a_2}$  ($a_1,a_2>0$) implies that
\begin{equation}\label{eq:q2}
\begin{aligned}
      &q= 
{\frac{\exp({\underline{\mu}-\eta}){\epsilon}}{\exp({\bar{\mu}+\eta})\epsilon+{2\log(m+1)}\|B\|_1^{2}}} \\
&\geq {\frac{\exp({\underline{\mu}-\eta})\cdot{\exp({-\bar{\mu}-\eta})\log(m+1)\|B\|_1^{2}}}{{3\log(m+1)}\|B\|_1^{2}}}\\
&= \frac13\exp({\underline{\mu}-\bar{\mu}-2\eta})=c_3.
\end{aligned}
\end{equation}
Now, applying Lemma \ref{lm:nestrerov}, we have
\begin{equation}\label{eq:nesterov}
    \begin{aligned}
        F_{\delta}\left(\mu^{t}\right)-F_{\delta}^{*} &\leq \left(1-\sqrt{q}\right)^t \cdot \left(F_{\delta}\left(\mu_0\right)-F_{\delta}^{*}\right)\\
        & = c_1\cdot \exp({t\log(1-\sqrt{q})})\\
        &\leq c_1\cdot \exp({-\sqrt{q}\cdot t}),\\
    \end{aligned}
\end{equation}
where the second inequality is due to the fact that $\log(1+y)\leq y$ for all $y\in (-1,\infty)$.

For the first case that $\epsilon\leq \exp({-\bar{\mu}-\eta})\log(m+1)\|B\|_1^{2}$, \eqref{eq:nesterov} and \eqref{eq:q} yield $F_{\delta}(\mu^{t})-F_{\delta}^{*}  \leq c_{1} \cdot \exp({-\sqrt{c_{2}\epsilon}\cdot t})$.
It follows that the desired inequality \eqref{eq:cri2} holds whenever 
\begin{equation}\label{eq:iteration_number}
    \bar{t}\geq\frac{-\log\left(\frac1{c_1}\min\left\{\frac{\sigma^2\epsilon^2}{2L},\frac{\sigma\epsilon}{2L}\right\}\right)}{\sqrt{c_2\epsilon}}=\tilde{\cO}\left(\frac{1}{\sqrt{\epsilon}}\right).
\end{equation}
For the other case that $\epsilon> \exp({-\bar{\mu}-\eta})\log(m+1)\|B\|_1^{2}$, \eqref{eq:nesterov} and \eqref{eq:q2} yield
$ F_{\delta}(\mu^{t})-F_{\delta}^{*}  \leq c_{1} \cdot \exp({-\sqrt{c_{3}}\cdot t})$.
In this case, the desired inequality \eqref{eq:cri2} holds whenever 
\[\bar{t}\geq\frac{-\log\left(\frac1{c_1}\min\left\{\frac{\sigma^2\epsilon^2}{2L},\frac{\sigma\epsilon}{2L}\right\}\right)}{\sqrt{c_3}}=\cO\left(\log\left(\frac{1}{\epsilon}\right)\right).\]
Combining the above bounds on $\bar{t}$ in two cases, we conclude that APM takes at most $\tilde{\cO}(\frac{1}{\sqrt{\epsilon}})$ iterations to find an $\epsilon$-CE price vector. Thus, we complete the proof of Theorem \ref{th:SAG}.

\begin{remark}[Parameters of APM]\label{re:parameter}
For some other parameter choices of APM, the above arguments (especially, \eqref{eq:nesterov}, \eqref{eq:criterion}, and the estimates on $q$) are also valid, and hence the result of Theorem \ref{th:SAG} still holds. For instance, one can set $\epsilon\in(0,+\infty)$ and $\delta=\min\{\epsilon, {\exp({\underline{\mu}-\eta})}\eta^2\}/(2\log(m+1)\|B\|_1)$.
\end{remark}

\subsection{Proof of Proposition \ref{pro:appriximate}}
The proof of Proposition \ref{pro:appriximate} is similar to that of \citet[Lemma 5.1]{chen2024computing}. 
To begin, let us introduce a technical lemma adapted from \citet[Lemma C.2]{chen2024computing} that will prove useful for the proof.
\begin{lemma}\label{le:tech}
    Consider $a_1,a_2,\ldots,a_m>0$ with $m>1$ and $\gamma>1+\log(m-1)$. The following inequality holds:
    \begin{equation*}
        \frac{\sum_{j\in[m]}a_j^{\gamma+1}}{\sum_{j\in[m]}a_j^{\gamma}}\geq \max_{j\in[m]}\{a_j\} \cdot\frac{1-\frac{1.3}{\gamma+1}}{(m-1)^{1/(\gamma+1)}}.
    \end{equation*}
\end{lemma}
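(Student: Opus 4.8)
The plan is to reduce the claimed bound to an elementary scalar inequality in $m$ and $\gamma$ by means of a power-mean (equivalently, H\"older) estimate. Since both sides of the asserted inequality are positively homogeneous of degree one in $(a_1,\dots,a_m)$, I would first rescale the $a_j$'s so that $M\coloneqq\max_{j\in[m]}\{a_j\}=1$; in particular $\sum_{j\in[m]}a_j^{\gamma}\ge M^{\gamma}=1$. The power-mean inequality applied with the exponents $\gamma<\gamma+1$ (both positive, since $\gamma>1$) gives $\left(\frac1m\sum_{j\in[m]}a_j^{\gamma}\right)^{1/\gamma}\le\left(\frac1m\sum_{j\in[m]}a_j^{\gamma+1}\right)^{1/(\gamma+1)}$; raising both sides to the power $\gamma(\gamma+1)$, taking the $\gamma$-th root, and multiplying by $m$ yields $\sum_{j\in[m]}a_j^{\gamma+1}\ge m^{-1/\gamma}\left(\sum_{j\in[m]}a_j^{\gamma}\right)^{(\gamma+1)/\gamma}$. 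Dividing by $\sum_{j\in[m]}a_j^{\gamma}$ and using $\sum_{j\in[m]}a_j^{\gamma}\ge1$ gives
\[
\frac{\sum_{j\in[m]}a_j^{\gamma+1}}{\sum_{j\in[m]}a_j^{\gamma}}\ \ge\ m^{-1/\gamma}\left(\sum_{j\in[m]}a_j^{\gamma}\right)^{1/\gamma}\ \ge\ \frac1{m^{1/\gamma}},
\]
so it remains to establish the scalar inequality $m^{-1/\gamma}\ge\left(1-\frac{1.3}{\gamma+1}\right)(m-1)^{-1/(\gamma+1)}$.

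Taking logarithms, this is equivalent to $\frac{\log(m-1)}{\gamma+1}-\frac{\log m}{\gamma}\ge\log\left(1-\frac{1.3}{\gamma+1}\right)$. Bounding $\log(m-1)\ge\log m-\frac1{m-1}$ on the left (since $\log\frac m{m-1}\le\frac1{m-1}$) and $\log\left(1-\frac{1.3}{\gamma+1}\right)\le-\frac{1.3}{\gamma+1}$ on the right, it suffices to verify $\frac{\log m}{\gamma}+\frac1{m-1}\le1.3$. This is where the hypothesis $\gamma>1+\log(m-1)$ enters: it gives $\frac{\log m}{\gamma}<\frac{\log m}{1+\log(m-1)}$, and for every integer $m\ge3$ one checks directly that $\frac{\log m}{1+\log(m-1)}+\frac1{m-1}\le1.3$; indeed this expression equals $\tfrac12+\tfrac{\log 3}{1+\log 2}\approx1.15$ at $m=3$, tends to $1$ as $m\to\infty$, and a routine derivative check shows it never exceeds $\tfrac65$. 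The remaining case $m=2$ reduces to $2^{-1/\gamma}\ge1-\frac{1.3}{\gamma+1}$ for $\gamma>1$, which follows from the elementary estimate $1-e^{-t}\le\frac{2t}{2+t}$ (equivalently $(2+t)e^{-t}\ge2-t$, proved by noting that the difference vanishes with its first derivative at $t=0$ and has nonnegative second derivative) applied with $t=(\log 2)/\gamma$.

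The only genuine work lies in the scalar verification of the previous paragraph, and I expect it to be routine bookkeeping of constants rather than a conceptual obstacle: the constant $1.3$ carries comfortable slack, being needed only to dominate $\approx1.15$ in the main range and $\log 2\approx0.69$ in the corner case $m=2$. Everything upstream — the homogeneity normalization, the single application of the power-mean inequality, and the trivial bound $\sum_{j\in[m]}a_j^{\gamma}\ge1$ — is immediate.
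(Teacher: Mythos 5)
Your proof is correct, but note that the paper does not actually prove Lemma \ref{le:tech}: it imports the statement verbatim from \citet[Lemma C.2]{chen2024computing} with no argument given, so your derivation is a genuinely self-contained alternative rather than a reconstruction of anything in this manuscript. Your route --- normalize $\max_j a_j=1$, apply the power-mean inequality to get $\sum_j a_j^{\gamma+1}/\sum_j a_j^{\gamma}\ge m^{-1/\gamma}$, and then reduce to the scalar inequality $m^{-1/\gamma}\ge\bigl(1-\tfrac{1.3}{\gamma+1}\bigr)(m-1)^{-1/(\gamma+1)}$ --- checks out at every step: the logarithmic reduction to $\tfrac{\log m}{\gamma}+\tfrac{1}{m-1}\le 1.3$ is valid, the hypothesis $\gamma>1+\log(m-1)$ together with a check of $m=3$ (value $\approx 1.15$) and $m=4$ (value $\approx 0.99$), plus the crude bound $\tfrac{\log m}{1+\log(m-1)}<1$ and $\tfrac{1}{m-1}\le\tfrac14$ for $m\ge 5$, disposes of all $m\ge 3$, and your separate treatment of $m=2$ via $1-e^{-t}\le\tfrac{2t}{2+t}$ is necessary (the generic chain would demand $\gamma\gtrsim 2.31$ there) and correctly executed, since the resulting condition is only $\gamma\ge 0.7\log 2/(2.6-2\log 2)\approx 0.4$. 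Two cosmetic points: the claim that the expression "never exceeds $\tfrac65$" is true but requires the slightly sharper estimate you sketch for $m\ge 5$ (the crude $1+\tfrac14$ only gives $1.25$), whereas all you need is the bound $1.3$, which follows immediately; and since $m$ is an integer count of goods, the "derivative check" can be replaced by the finite verification you already perform. What your approach buys is a short, elementary, and fully explicit proof that makes the lemma independent of the external reference, at the mild cost of invoking the power-mean inequality rather than a bare-hands estimate.
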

Then, we present the following observations concerning the optimal utility in (B2).
\begin{lemma}\label{le:max}
	Consider a price vector $p\in\R^m_+$. Let $\bar{v}_i=\max_{x_i \in \R_{+}^{n}}\{u_{i}(x_i):\sum_{j\in[m]} p_jx_{ij} \leq B_i\}$ for $i\in[n]$. The following hold:
	\begin{enumerate}[{\rm (i)},leftmargin=*]
		\item For linear utility $u_i(x_i)=v_i^{\top}x_i$, we have $\bar{v}_i=B_i\max_{j\in[m]}\{v_{ij}/p_j\}$.
		\item For quasi-linear utility $u_i(x_i)=(v_i-p)^{\top}x_i$, we have
		$\bar{v}_i=B_i\max_{j\in\{0\}\cup[m]}\{{v_{ij}}/{p_j}\}-B_i$.
	\end{enumerate}
\end{lemma}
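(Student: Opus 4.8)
The plan is to treat both parts as fractional-knapsack linear programs and, in each case, to establish a matching upper bound (every feasible allocation has utility at most the claimed value) together with a lower bound exhibited by one concrete feasible allocation. Throughout I will work under $p_j>0$ for all $j\in[m]$, which is the only regime used later since the prices in Proposition~\ref{pro:appriximate} are of the form $\exp(\mu_j)$; if instead some $p_j=0$ with $v_{ij}>0$, then the quasi-linear (resp.\ linear) objective is unbounded over the feasible set while the corresponding maximum ratio is $+\infty$, so the identity still holds with both sides equal to $+\infty$.

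For part (i) the upper bound comes from the rearrangement, valid for every $x_i\in\R^m_+$ with $\sum_{j\in[m]}p_jx_{ij}\le B_i$,
\[
u_i(x_i)=\sum_{j\in[m]}v_{ij}x_{ij}=\sum_{j\in[m]}\frac{v_{ij}}{p_j}\,(p_jx_{ij})\le\Bigl(\max_{j\in[m]}\frac{v_{ij}}{p_j}\Bigr)\sum_{j\in[m]}p_jx_{ij}\le B_i\max_{j\in[m]}\frac{v_{ij}}{p_j}.
\]
For the reverse inequality I would take $j^\star\in\argmax_{j\in[m]}\{v_{ij}/p_j\}$, set $x_{ij^\star}=B_i/p_{j^\star}$ and $x_{ij}=0$ for $j\neq j^\star$; this allocation is feasible with total spending exactly $B_i$, and it attains $u_i(x_i)=B_iv_{ij^\star}/p_{j^\star}=B_i\max_{j\in[m]}\{v_{ij}/p_j\}=\bar v_i$.

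For part (ii), first observe that in the quasi-linear setting $\alpha=1$, so $\mu_0=\log\alpha=0$ and the artificial index $j=0$ carries ratio $v_{i0}/p_0=\exp(-\mu_0)=1$; hence $\max_{j\in\{0\}\cup[m]}\{v_{ij}/p_j\}=\max\{1,\max_{j\in[m]}v_{ij}/p_j\}$. Next, rewriting the quasi-linear utility as $u_i(x_i)=\sum_{j\in[m]}\bigl(v_{ij}/p_j-1\bigr)p_jx_{ij}$, bounding each summand by $\max\{0,\max_{j\in[m]}v_{ij}/p_j-1\}\cdot p_jx_{ij}$ (legitimate because $p_jx_{ij}\ge0$), summing over $j$, and using $\sum_{j\in[m]}p_jx_{ij}\le B_i$ together with nonnegativity of the prefactor, yields $u_i(x_i)\le B_i\bigl(\max_{j\in\{0\}\cup[m]}v_{ij}/p_j-1\bigr)$. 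Attainment splits into two cases: if $\max_{j\in[m]}v_{ij}/p_j\le1$, then $x_i=0$ is feasible and optimal with $u_i=0=B_i\cdot1-B_i$; otherwise the same concentrated allocation $x_{ij^\star}=B_i/p_{j^\star}$ with $j^\star\in\argmax_{j\in[m]}v_{ij}/p_j$ is feasible and gives $u_i(x_i)=B_i(v_{ij^\star}/p_{j^\star}-1)=\bar v_i$.

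I do not anticipate a genuine obstacle. The only points requiring care are the sign bookkeeping when pulling the nonnegative prefactor $\max\{0,\cdot\}$ outside the sum (it must be nonnegative for the budget inequality $\sum_{j}p_jx_{ij}\le B_i$ to be applied in the correct direction), and recognizing that the ``buy nothing'' option of the quasi-linear buyer is precisely what the index $j=0$ with unit bang-per-buck encodes; both are routine once made explicit.
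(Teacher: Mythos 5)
Your proof is correct and follows essentially the same route as the paper's: the bang-per-buck rearrangement $u_i(x_i)=\sum_j (v_{ij}/p_j)\,p_jx_{ij}$ for the upper bound, attainment by concentrating the whole budget on an argmax good, and a case split in the quasi-linear setting according to whether the ``buy nothing'' index $0$ (with unit ratio $v_{i0}/p_0=1$) is optimal. The only cosmetic difference is that you package the quasi-linear upper bound into a single inequality via the nonnegative prefactor $\max\{0,\max_j v_{ij}/p_j-1\}$, whereas the paper argues the two cases separately; both are valid.
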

\begin{proof}[Proof of Lemma \ref{le:max}]
(i) Under linear utilities, for all $x_i\in\R^m_+$ satisfying $p^{\top}x_i\leq B_i$, we have
\begin{equation*}
	u_i(x_i)=v_i^{\top}x_i=\sum_{j\in[m]}\frac{v_{ij}}{p_j}p_jx_{ij}\leq \max_{j\in[m]}\left\{\frac{v_{ij}}{p_j}\right\}\sum_{j\in[m]} p_jx_{ij}\leq B_i \max_{j\in[m]}\left\{\frac{v_{ij}}{p_j}\right\}.
\end{equation*}
Clearly, the equality can be attained by choosing $x_i=B_i\e_j/p_j$ with $j\in\argmax_{j\in[m]}\left\{{v_{ij}}/{p_j}\right\}$. We conclude that $\bar{v}_i=B_i\max_{j\in[m]}\{v_{ij}/p_j\}$.

(ii) If $0\in\argmax_{j\in\{0\}\cup[m]}\left\{{v_{ij}}/{p_j}\right\}$, then by $v_{i0}=p_0=1$, we have $\max_{\{0\}\cup[m]}\left\{{v_{ij}}/{p_j}\right\}=1$ and further $p_j\geq v_{ij}$ for $j\in[m]$. It follows that $u_i(x_i)=(v_i-p)^{\top}x_i\leq0$ for all $x_i\in\R^m_+$. This yields
\begin{equation*}
\bar{v}_i=\max_{x_i\in\R^m_+} \left\{u_{i}(x_i):\sum_{j\in[m]} p_jx_{ij} \leq  B_i\right\}=0=B_i\max_{j\in\{0\}\cup[m]}\left\{\frac{v_{ij}}{p_j}\right\}-B_i.
\end{equation*} 
If $0\notin\argmax_{j\in\{0\}\cup[m]}\left\{{v_{ij}}/{p_j}\right\}$, then by $v_{i0}=1$ and $p_0=1$, we know that $\max_{j\in[m]}\left\{{v_{ij}}/{p_j}\right\}>1$ and hence $p_j< v_{ij}$ for $j\in\argmax_{j\in[m]} \left\{{v_{ij}}/{p_j}\right\}$. Then, $u_i(x_i)=(v_i-p)^{\top}x_i>0$ for some $x_i\in\R^m_+$. We further see that \[\max\limits_{x_i\in\R^m_+} \left\{u_{i}(x_i):\sum_{j\in[m]} p_jx_{ij} \leq  B_i\right\}>0,\] where the optimal solution $\bar{x}_i$ satisfies  \[ \bar{x}_{ij}>0 ~\text{ only if }~j\in \argmax_{j\in[m]} \left\{\frac{v_{ij}-p_j}{p_j}\right\}=\argmax_{j\in[m]} \left\{\frac{v_{ij}}{p_j}\right\};\quad p^{\top}\bar{x}_i=B_i.\]
Let $J_i=\argmax_{j\in[m]}\{{v_{ij}}/{p_j}\}$. We have
\[v_i^{\top}\bar{x}_i=\sum_{j\in J_i}\frac{v_{ij}}{p_j}\bar{x}_{ij}p_j=\sum_{j\in J_i}\bar{x}_{ij}p_j\cdot \max_{j\in[m]} \left\{\frac{v_{ij}}{p_j}\right\}=\sum_{j\in[m]}\bar{x}_{ij}p_j\cdot\max_{j\in[m]} \left\{\frac{v_{ij}}{p_j}\right\} =B_i\max_{j\in[m]} \left\{\frac{v_{ij}}{p_j}\right\}.\]
Hence, we have the following estimate for $\bar{v}_i$:
\begin{equation*}
\bar{v}_i=\max_{x_i\in\R^m_+} \left\{u_{i}(x_i):\sum_{j\in[m]} p_jx_{ij} \leq  B_i\right\}=v_i^{\top}\bar{x}_i-p^{\top}\bar{x}_i=B_i\max_{j\in[m]} \left\{\frac{v_{ij}}{p_j}\right\}-B_i.
\end{equation*}
Combining the above two cases, we complete the proof.
\end{proof}

With the above lemmas in hand, we are ready to verify the conditions (B1), (B2), and (B3). First, we show  (B1) holds via the definition of $x$:
    \[
    p^{\top}x_i=B_i\frac{\sum_{j\in [m]}\exp\left({\frac{\log(v_{ij})-\mu_j}{\delta}}\right)}{\sum_{j\in\{0\}\cup[m]}\exp\left({\frac{\log(v_{ij})-\mu_j}{\delta}}\right)}\leq B_i.
    \]
We then prove (B3). Recall $p_j=\exp(\mu_j)$, $j\in[m]$ and let 
\[d_{ij}=B_{i}\cdot \frac{\exp\left({\frac{\log(v_{ij})-\mu_j}{\delta}}\right)}{\sum\limits_{j\in\{0\}\cup[m]} \exp\left({\frac{\log(v_{ij})-\mu_j}{\delta}}\right) }; \qquad i\in[n], j\in[m].\] 
    By direct computation, we have $\nabla_jF_{\delta}(\mu)=p_j-\sum_{i\in[n]}d_{ij}$. This, together with the stopping criterion $\|\nabla F_{\delta}(\mu)\|_2\leq \min\{\sigma\epsilon,\sqrt{\sigma\epsilon} \}$ and the fact $\sigma=\exp({\underline{\mu}-\eta})\leq \exp(\mu_j)= p_j$, yields $|\nabla_jF_{\delta}(\mu)|\leq p_j\epsilon$ and further
    \[ \epsilon\geq \left|\frac{\nabla_jF_{\delta}(\mu)}{p_j}\right| =\left|1-\sum_{i\in[n]}\frac{d_{ij}}{p_j}\right|,\qquad\forall\ j\in[m].\] 
    By definition, we have $x_{ij}=d_{ij}/p_j$.
    It follows that $|\sum_{i\in[n]}x_{ij}-1|\leq\epsilon$, which accords with (B3).

It is left to prove (B2). We first consider the \textit{linear utilities}, i.e., $u_i(x_i)=v_i^{\top}x_i$ and $\alpha=+\infty$. By Lemma \ref{le:max} (i), to show (B2), it suffices to prove 
    \begin{equation}\label{eq:dijxj}
         u_i(x_i)=v_{i}^{\top}x_{i}\geq\left(1-\frac{2\epsilon}{\|B\|_1}\right)B_i \max_{j\in[m]}\left\{\frac{v_{ij}}{p_j}\right\}.
    \end{equation}
    Let $p_0:=\exp({\mu_0})$. We next estimate $v_i^{\top}x_i$:
    \begin{equation}\label{eq:dijxj2}
        \begin{aligned}
            v_i^{\top}x_i = \sum_{j\in[m]} v_{ij}\frac{d_{ij}}{p_j} 
             = B_i\sum_{j\in [m]} \frac{v_{ij}}{p_j}\cdot  \frac{\left(\frac{v_{ij}}{\exp(\mu_j)}\right)^{\frac1{\delta}} }{\sum_{j\in \{0\}\cup[m]}\left(\frac{v_{ij}}{\exp(\mu_j)}\right)^{\frac1{\delta}} }  =  B_i\sum_{j\in[m]}  \frac{\left(\frac{v_{ij}}{p_j}\right)^{\frac1{\delta}+1} }{\sum_{j\in\{0\}\cup[m]}\left(\frac{ v_{ij} }{p_j}\right)^{\frac1{\delta}} }.
        \end{aligned}
    \end{equation}
    Notice $p_0=\exp({\mu_0})=\alpha=+\infty$  for linear utilities. \eqref{eq:dijxj} automatically holds if $m=1$. Consider $m\geq2$ and apply Lemma \ref{le:tech}. We have
    \begin{equation}\label{eq:vx}
        \begin{aligned}
            v_i^{\top}x_i =B_i  \frac{\sum_{j\in[m]}\left(\frac{v_{ij}}{p_j}\right)^{\frac1{\delta}+1} }{\sum_{j\in [m]}\left(\frac{ v_{ij} }{p_j}\right)^{\frac1{\delta}} }  \geq B_{i} \max_{j\in[m]}\left\{\frac{v_{ij}}{p_j}\right\} \cdot\frac{1-\frac{1.3}{\frac{1}{\delta}+1}}{(m-1)^{1/(\frac{1}{\delta}+1)}} \geq  B_{i} \max_{j\in[m]}\left\{\frac{v_{ij}}{p_j}\right\}\cdot \frac{1-1.3\delta}{(m-1)^{\delta}}. 
        \end{aligned}
    \end{equation} 
    Here, the second inequality follows from the monotonic increasing property of the function $x\mapsto (1-1.3x^{-1})/(m-1)^{x^{-1}}$ on $(1.3,+\infty)$ and the inequality $1/\delta+1>1/\delta=2\log(m+1)\|B\|_1/\epsilon>1.3$ (recall $\epsilon\leq \log(m+1)\|B\|_1$).

     Then, let us estimate ${(1-1.3\delta)}/{(m-1)^{\delta}}$. By direct computation, we have
     \[
     \begin{aligned}
         (1.3+\log(m-1))\delta=\frac{1.3+\log(m-1)}{2\log(m+1)\|B\|_1}\epsilon\leq  &\left(\frac{1.3}{2\log(m+1)}+\frac{\log(m-1)}{2\log(m+1)}\right)\frac{\epsilon}{\|B\|_1} \\
         \leq& \left(\frac{1.3}{2\log(2)}+\frac12\right)\frac{\epsilon}{\|B\|_1} \leq \frac{2\epsilon}{\|B\|_1}. 
     \end{aligned}\]
    It follows from $1\geq\exp(-\delta\log(m-1))\geq 1-\delta\log(m-1)$ that 
    \begin{equation}\label{eq:vx2}
    \begin{aligned}
         \frac{2\epsilon}{\|B\|_1}&\geq1.3\delta+ \delta\log(m-1)\\
         &\geq {1.3\delta}\cdot \exp({-\delta\log(m-1)})+1-\exp({- \delta\log(m-1)})\\
         &=1-(m-1)^{-\delta}(1-1.3\delta).
    \end{aligned}
    \end{equation}
    Hence, we see that $(m-1)^{-\delta}(1-1.3\delta)\geq1-2\epsilon/\|B\|_1$. This, together with \eqref{eq:vx}, yields the desired \eqref{eq:dijxj}. Therefore, (B2) holds for linear utilities.
   
    We then prove (B2) under \textit{quasi-linear utilities}, where $p_0=\exp({\mu_0})=\alpha=1$. We first estimate the left-hand side of (B2). Using the definition of $d_{ij}$, $\exp({\frac{\log(v_{ij})-\mu_j}{\delta}})=(v_{ij}/p_j)^{1/\delta}$, and $v_{i0}=p_0=1$, we have
\[p^{\top}x_i=\sum_{j\in[m]}p_j\frac{d_{ij}}{p_j}=\sum_{j\in[m]}d_{ij}=B_i\frac{\sum_{j\in[m]}\left(\frac{ v_{ij} }{p_j}\right)^{\frac1{\delta}} }{\sum_{j\in\{0\}\cup[m]} \left(\frac{ v_{ij} }{p_j}\right)^{\frac1{\delta}}}=B_i-B_i\frac{1 }{1+\sum_{j\in[m]} \left(\frac{ v_{ij} }{p_j}\right)^{\frac1{\delta}}}.\]
This, together with \eqref{eq:dijxj2}, yields 
    \begin{equation}\label{eq:quasilinear}
        (v_i-p)^{\top}x_i+B_i=B_i  \frac{1+\sum_{j\in[m]}\left(\frac{v_{ij}}{p_j}\right)^{\frac1{\delta}+1} }{1+\sum_{j\in [m]}\left(\frac{ v_{ij} }{p_j}\right)^{\frac1{\delta}} }.
    \end{equation}
    We then estimate the right-hand side of (B2).
  By Lemma \ref{le:max} (ii), we have
  \begin{equation}\label{eq:quasilinear2}
      \max_{x^{\prime}_i\in\R^m_+} \left\{u_{i}(x^{\prime}_i)+B_i:\sum_{j\in[m]} p_jx^{\prime}_{ij} \leq  B_i\right\}=B_i\max_{j\in\{0\}\cup[m]} \left\{\frac{v_{ij}}{p_j}\right\}-B_i+B_i=B_i\max_{j\in\{0\}\cup[m]} \left\{\frac{v_{ij}}{p_j}\right\}.
  \end{equation}
Given the left-hand and right-hand side expressions of (B2), i.e., \eqref{eq:quasilinear} and \eqref{eq:quasilinear2}, the proof of (B2) reduces to that for linear utilities, especially the arguments of \eqref{eq:vx} and \eqref{eq:vx2}. We complete the proof.

\section{Missing Proofs of Sec. \ref{sec:exact-CE}}\label{appen:exact-CE}
\subsection{Proof of Lemma \ref{le:recover}}\label{appen:recover}
For each $j\in[m]$, the optimality condition \eqref{eq:optimality2} implies that at the optimal solution $\mu^*\in\R^m$, there exists an index $i\in[n]$ such that $\lambda_{ij}>0$. It follows that $j\in\cJ_{i}(\mu^*)=J^*_{i}$ by the definition of $\lambda_i$. Therefore, we have \[[m]\subseteq\bigcup_{i\in[n]}J^*_{i}=\bigcup_{l\in[s]}\tilde{J}^*_{l},\]
where the equality is due to the definition of $\tilde{J}^*_{l},l\in[s]$.

We then focus on showing the equivalence between the optimality condition \eqref{eq:optimality2} and equations \eqref{eq:mu*1}, \eqref{eq:mu*2} over $l\in[s]$. 
We first prove that \eqref{eq:mu*1} and \eqref{eq:mu*2} can be implied by \eqref{eq:optimality2}. 
  Observe that the definitions of $\cJ_i$ and $J^*_i$ ensure \eqref{eq:mu*1}. We focus on \eqref{eq:mu*2}. Note that \eqref{eq:optimality2} implies that under the condition $0\notin \tilde{J}^*_l$,
\[\exp(\mu_j)=\sum_{i\in[n]}\lambda_{ij},\quad\forall~j\in[m],\quad\text{ where }\quad \lambda_i\in\conv\{B_i\e_j:j\in J^*_i\}.\]
Summing up the above equality over $j\in\tilde{J}^*_l$ (noticing  $0\notin \tilde{J}^*_l$), we have
\begin{equation}\label{eq:J*l}
    \sum_{j\in \tilde{J}^*_l}\exp(\mu_j)= \sum_{i\in[n]}\sum_{j\in \tilde{J}^*_l}\lambda_{ij}.
\end{equation}
Recall that $\{J^*_i,i\in I^*_l\},l\in[s]$ is a connection class and $\tilde{J}_l^*=\bigcup_{i\in I^*_l}J^*_i$. By definition, we have $J_i^*\cap \tilde{J}_l^*=\emptyset$ for $i\notin I_l$ and $J^*_i\subseteq \tilde{J}^*_l$ for $i\in I^*_l$. Since $ \lambda_i\in\conv\{B_i\e_j:j\in J^*_i\}$, we see that $\lambda_{ij}=0$ for $i\notin I^*_l$, $j\in \tilde{J}_l^*$; and $\sum_{j\in \tilde{J}^*_l}\lambda_{ij}=B_i$ for $i\in I^*_l$. These, together with \eqref{eq:J*l}, yield
\[ \sum_{j\in \tilde{J}^*_l}\exp(\mu_j)= \sum_{i\in I^*_l}\sum_{j\in \tilde{J}^*_l}\lambda_{ij}=\sum_{i\in I^*_l}B_i, \]
which accords with \eqref{eq:mu*2}. We conclude that \eqref{eq:mu*1} and \eqref{eq:mu*2} are implied by \eqref{eq:optimality2}, and thus the solution set of \eqref{eq:mu*1} and \eqref{eq:mu*2} contains that of \eqref{eq:optimality2}, i.e., $\{\mu^*\}$.
 
The remaining task is to show that the solution of \eqref{eq:mu*1} and \eqref{eq:mu*2} is unique. Select an arbitrary index pair $(i^*,j^*)$ satisfying $j^*\in J_{i^*}^*$ and $i^*\in I^*_l$. By $\tilde{J}^*_l=\bigcup_{i\in I^*_l}J^*_i$ and the definition of connection class, for each $j\in\tilde{J}^*_l$, there is an index $i\in I^*_l$ such that $j\in J_i^*$; and indices $i_1,i_2,\ldots i_{c}\in I^*_l$ and $j_1,j_2,\ldots j_{c+1}\in\tilde{J}^*_l$, $c\in[|I^*_l|]$ such that
\[j_1\in J_{i^*}^*\cap J_{i_1}^*;\quad j_2\in J_{i_1}^*\cap J_{i_2}^*;\quad\ldots\quad, j_c\in J_{i_{c-1}}^*\cap J_{i_{c}}^*;\quad j_{c+1}\in J_{i_c}^*\cap J_{i}^*.\]
It follows that
\[j^*,j_1\in J_{i^*}^*;\quad j_1,j_2\in J_{i_1}^*;\quad\ldots\quad;\quad j_{c-1},j_{c}\in J_{i_{c-1}}^*;\quad j_{c},j_{c+1}\in J_{i_{c}}^*;\quad j_{c+1},j\in J^*_i.\]
This, together with \eqref{eq:mu*1}, implies  
\begin{equation}\label{eq:chain}
\begin{array}{c}
     \log(v_{i^*j^*})-\mu_{j^*}= \log(v_{i^*j_1})-\mu_{j_1};\\
      \log(v_{i_1j_1})-\mu_{j_1}= \log(v_{i_1j_2})-\mu_{j_2};\quad\ldots\quad; \quad\log(v_{i_cj_{c}})-\mu_{j_{c}}= \log(v_{i_cj_{c+1}})-\mu_{j_{c+1}};\\
        \log(v_{ij_{c+1}})-\mu_{j_{c+1}}= \log(v_{ij})-\mu_{j}.
\end{array}
\end{equation}
By \eqref{eq:chain}, we see that for every $\mu$ satisfying \eqref{eq:mu*1} and $j\in\tilde{J}^*_l$, there is a constant $a^{j^*}_j$, which can be computed through the addition and subtraction of $\log(v_{ij})$ for $i\in I^*_l$, $j\in \tilde{J}^*_l$, such that
\begin{equation}\label{eq:chain2}
    \mu_j=\mu_{j^*}+a^{j^*}_j,\quad \forall~j\in\tilde{J}_l^*.
\end{equation}
Clearly, $a^{j^*}_j$ is unique as we have shown that \eqref{eq:mu*1} and \eqref{eq:mu*2} are solvable with a solution $\mu^*$.

Now, we are ready to prove that the solution of equations \eqref{eq:mu*1} and \eqref{eq:mu*2} is unique and exactly computable. First, consider the case that $0\in\tilde{J}^*_l$, where the utilities must be quasi-linear and $\mu_0=\log(\alpha)=0$. Set $j^*=0$. Then, the solution coordinates $\mu_j,j\in \tilde{J}^*_l$ are uniquely implied by \eqref{eq:chain2}, i.e., 
\begin{equation}\label{eq:comp1}
    \mu_j=a^0_j,\quad  j\in\tilde{J}_l^*.
\end{equation}
For the case that $0\notin\tilde{J}^*_l$, substituting \eqref{eq:chain2} into \eqref{eq:mu*2}, we have
\[ \sum_{j\in\tilde{J}^*_l}\exp\left({\mu_{j^*}+a^{j^*}_j}\right)= \sum_{i\in I^*_l}B_i, \]
which yields a unique solution $\mu_{j^*}=\log(\sum_{i\in I^*_l}B_i)-\log(\sum_{j\in \tilde{J}^*_l}\exp({a^{j^*}_j}))$. It follows that
\begin{equation}\label{eq:comp2}
     \mu_j=\log\left(\sum_{i\in I^*_l}B_i\right)-\log\left(\sum_{j\in \tilde{J}^*_l}\exp\left({a^{j^*}_j}\right)\right)+a^{j^*}_j,\quad j\in\tilde{J}_l^*.
\end{equation}
Combining the two cases, we complete the proof.

\subsection{Proof of Lemma \ref{le:index}}
       We prove the equality from two directions: (i) $\cJ_i(\mu^*)\subseteq\{j\in\{0\}\cup[m]:\log(v_{ij})-\mu_j\geq h_i(\mu)-2r\}$; (ii) $\cJ_i(\mu^*)\supseteq\{j\in\{0\}\cup[m]:\log(v_{ij})-\mu_j\geq h_i(\mu)-2r\}$.

        \textbf{Direction (i):} Consider an arbitrary index $j\in\cJ_i(\mu^*)$. For $\mu\in\B(\mu^*,r)$, the triangle inequality yields
        \begin{equation*}
           \begin{aligned}
               |\log(v_{ij})-\mu_{j}-h_i(\mu)| & \leq |\log(v_{ij})-\mu_{j}^*-h_i(\mu^*) | + |\mu_{j}^*-\mu_j| + |h_{i}(\mu^*)-h_{i}(\mu)|\\
               & \leq 0+\|\mu^*-\mu\|_2+\|\mu^*-\mu\|_2\\
               &\leq 2r, \label{eq:direct1}
            \end{aligned}
       \end{equation*}
       where the second inequality is due to $j\in\cJ_i(\mu^*)$, $|\mu_{j}^*-\mu_j|\leq \|\mu^*-\mu\|_2$, and the $1$-Lipschitz continuity of the function $h_i$.
 It follows that $\log(v_{ij})-\mu_{j}\geq h_i(\mu)-2r$. This proves Direction (i).

          \textbf{Direction (ii):} Consider an arbitrary element $j$ of $ \{j\in\{0\}\cup[m]:\log(v_{ij})-\mu_j\geq h_i(\mu)-2r\}$. Notice that $\log(v_{ij})-\mu_j\leq h_i(\mu)$ by definition of $h_i$. We see that $-2r\leq\log(v_{ij})-\mu_j- h_i(\mu)\leq0$ and hence
          \[ |\log(v_{ij})-\mu_j- h_i(\mu)|\leq 2r.\]
          This, together with the triangle inequality, yields 
            \begin{equation}\label{eq:direct2}
           \begin{aligned}
              |\log(v_{ij})-\mu_{j}^*-h_i(\mu^*) |  & \leq  |\log(v_{ij})-\mu_{j}-h_i(\mu)|+ |\mu_j - \mu_{j}^*| + |h_{i}(\mu)-h_{i}(\mu^*)|\\
               & \leq 2r+\|\mu-\mu^*\|_2+\|\mu-\mu^*\|_2 \\
               & \leq 4r< \Delta^*,
           \end{aligned}
       \end{equation}
       where the second inequality uses $|\mu_{j}^*-\mu_j|\leq \|\mu^*-\mu\|_2$, $ \|\mu^*-\mu\|_2\leq r$, and the $1$-Lipschitz continuity of the function $h_i$; the third inequality is due to the condition $r<\Delta^*/4$.
       
By the definition of $\Delta^*$, i.e., \eqref{eq:def-delta}, if $j\notin \cJ_i(\mu^*)$, then one has $ h_i(\mu^*)-(\log(v_{ij})-\mu_{j}^*)\geq\Delta^*$, which contradicts \eqref{eq:direct2}. Hence, we have $j\in \cJ_i(\mu^*)$ for $j$ in $ \{j\in\{0\}\cup[m]:\log(v_{ij})-\mu_j\geq h_i(\mu)-2r\}$. This implies the desired $\{j\in\{0\}\cup[m]:\log(v_{ij})-\mu_j\geq h_i(\mu)-2r\}\subseteq \cJ_i(\mu^*)$. The proof is complete.

\subsection{Proof of Lemma \ref{le:class}}\label{appen:cost}
(i) For the sake of distinction, let $I_s^{\text{out}},\tilde{J}^{\text{out}}_s,s\in[s^{\text{out}}]$ (resp. $I_s ,\tilde{J}_s$) denote the outputs (resp. variables in process) of Algorithm \ref{al:class}. We use outer (resp. inner) iteration to refer to the operations from Line 3 to Line 15 (resp. Line 8 to Line 14), indexed by $s\in[s^{\text{out}}]$ (resp. $t\in[|\tilde{J}^{\text{out}}_s|]$).   Notice that the updates in Line 6, 10, 12, 13 ensure $\tilde{J}^{\text{out}}_s=\bigcup_{i\in I^{\text{out}}_s}J_i$. We focus on showing that $\{J_i:i\in I^{\text{out}}_s\},s\in[s^{\text{out}}]$ are the connection classes.

We first show that $\tilde{J}_s^{\text{out}}\cap \tilde{J}^{\text{out}}_{s^{\prime}}=\emptyset$ for $s\neq s^{\prime}$. Indeed, by the update of $\tilde{J}_s$ in Line 6, 12, 13, we have
\begin{equation}\label{eq:I_s}    \tilde{J}_s^{\text{out}}=J_{i^s}\cup\left(\bigcup_{t\in\left[\left|\tilde{J}_s^{\text{out}}\right|\right]}\bigcup_{i\in\cI_{new}(j^s_t)}J_i\right).\end{equation}
Observe that the $s$-th outer iteration ends with $\tilde{J}_s^{\text{out}}=J_{check}=\{j^s_t:t\in|\tilde{J}_s^{\text{out}}|\}$ and  $j^s_t\notin J_i$ for $i\in I^c$. We see that the $s$-th outer iteration ends with 
\begin{equation}\label{eq:empty}
	\left(\bigcup_{i\in I^c} J_i\right)\cap \tilde{J}^{\text{out}}_s=\emptyset.
\end{equation} 
In other words, for $s^{\prime}>s$, the $s^{\prime}$-th outer iteration begins with \eqref{eq:empty}.
 Note that the set $I^c$ at the beginning of the $s^{\prime}$-th ($s^{\prime}\geq2$) outer iteration satisfies $I^c\supseteq I^{\text{out}}_{s^{\prime}}$ and further $\bigcup_{i\in I^c} J_i\supseteq  \tilde{J}_{s^{\prime}}^{\text{out}}$. It follows that $\tilde{J}^{\text{out}}_{s^{\prime}}\cap\tilde{J}^{\text{out}}_s=\emptyset$ for all $s^{\prime}>s$. Clearly, this can be restated as $\tilde{J}^{\text{out}}_{s^{\prime}}\cap\tilde{J}^{\text{out}}_s=\emptyset$ for $s^{\prime}\neq s$.

We then prove that $I^{\text{out}}_s=\{i\in[n]:J_i\sim J_{i^s}\}$, $s\in[s^{\text{out}}]$. Note that the update of $\tilde{J}_s$ in Line 6, 10, 12, 13 ensures that $\tilde{J}_s=\bigcup_{i\in I_s} J_i$ at Line 8. Hence, in Line 9 of the $s$-th outer iteration, $j^s_t\in J_{\bar{i}}$ for some $\bar{i}\in I_s$, and then  
$J_i\sim J_{\bar{i}}$ for all $i\in\cI_{new}(j^*)$ by definition. This, together with the initialization $I_s=\{i^s\}$ and the update in Line 10, yields \[I^{\text{out}}_s\subseteq\{i\in[n]:J_i\sim J_{i^s}\}.\]
On the other hand, recalling that $\tilde{J}_s^{\text{out}}\cap J^{\text{out}}_{s^{\prime}}=\emptyset$ for $s\neq s^{\prime}$ and  $\tilde{J}^{\text{out}}_s=\bigcup_{i\in I^{\text{out}}_s}J_i$, we see that 
$J_i\cap J_{i^{\prime}}=\emptyset$ for all $i\in I^{\text{out}}_s,i^{\prime}\in I^{\text{out}}_{s^{\prime}}$ with $s\neq s^{\prime}$. It follows that $J_{i}\nsim J_{i^{\prime}}$ for $i\in  I^{\text{out}}_s$, $i^{\prime}\notin  I^{\text{out}}_s$ by checking the definition. Hence, for $i^s\in I^{\text{out}}_s$, we have
\[\{i\in[n]:J_i\sim J_{i^s}\}\subseteq I^{\text{out}}_s .\]
Combined with  $I^{\text{out}}_s\subseteq\{i\in[n]:J_i\sim J_{i^s}\}$, it yields $I^{\text{out}}_s=\{i\in[n]:J_i\sim J_{i^s}\}$, $s\in[s^{\text{out}}]$. We conclude that $\{J_i:i\in I^{\text{out}}_s\}$ forms a connection class for all $s\in[s^{\text{out}}]$.

(ii) 
We analyze the computational cost of the operations for each line.

\textit{Line 2, 4, 5}: 
Clearly, the one-time cost is at most $\cO(n)$, and they are repeated for at most $s^{\text{out}}=\cO(n)$ times. Hence, their total computational cost is at most $\cO(n^2)$. 

\textit{Line 6}: The one-time cost is at most $\cO(m)$, it is repeated for at most $s^{\text{out}}=\cO(n)$ times. Hence, the total computational cost is at most $\cO(mn)$. 

\textit{Line 7, 8, 13, 14}: The operations include merging and substracting the subsets of $\{0\}\cup[m]$, and checking whether $J_{check}\subsetneqq \tilde{J}_s$. Hence, the one-time cost is at most $\cO(m)$. Since the inner iteration is repeated for $|\tilde{J}^{\text{out}}_s|$ times in the $s$-th outer iteration, their repetition number is
\begin{equation}\label{eq:repeat1}
\sum_{s\in[s^{\text{out}}]}\left|\tilde{J}^{\text{out}}_s\right|=\left|\bigcup_{s\in[s^{\text{out}}]}\tilde{J}^{\text{out}}_s\right|\leq|\{0\}\cup[m]|=m+1,
\end{equation}
where the first equality is due to $\tilde{J}^{\text{out}}_s\cap \tilde{J}^{\text{out}}_{s^{\prime}}=\emptyset$ for $s\neq s^{\prime}$; and the second equality is due to the fact that $\tilde{J}^{\text{out}}_s\subseteq \{0\}\cup[m]$ for all $s\in[s^{\text{out}}]$.

We see that the total computational cost of Line 7, 8, 13, and 14 is at most $\cO(m^2)$.

\textit{Line 9, 10, 11}: The operations include merging and substracting the subsets of $[n]$, and checking whether\footnote{One can first spend a time complexity of $\cO(mn)$ to define a matrix $M\in\R^{n\times(m+1)}$, where $M_{i,j+1}=1$ if $j\in J_i$ and $M_{i,j+1}=0$ otherwise. Then, checking whether $J_i,i\in [n]$ contains $j^*$ is equivalent to checking whether $M_{i,j^*+1}=1$ for $i\in[n]$, where the cost is $\cO(n)$ for each $j^*\in\{0\}\cup[m]$.} $J_i,i\in [n]$ contains $j^s_t$. We see that the one-time cost is at most $\cO(n)$. The repetition number of Line 9, 10, and 11 is the same as Line 7, 8, 13, and 14, given by \eqref{eq:repeat1}. Therefore, the total computational cost of Line 9, 10, and 11 is at most $\cO(mn)$.

\textit{Line 12}: 
The operation of Line 12 merges $|\cI_{new}(j^s_t)|$ subsets of $\{0\}\cup[m]$ at a one-time cost of $\cO(m\cdot|\cI_{new}(j^s_t)|)$. We see that
for the $s$-th outer iteration, the cost of operation of Line 12 is
\begin{equation}\label{eq:repeat2}
\cO\left(m\sum_{t\in\left[\left|\tilde{J}^{\text{out}}_s\right|\right]}|\cI_{new}(j^s_t)|\right)=\cO\left(m\left|\bigcup_{t\in\left[\left|\tilde{J}^{\text{out}}_s\right|\right]} \cI_{new}(j^s_t)\right|\right)=\cO\left(m\left|I^{\text{out}}_s \right|\right).
\end{equation}
Here, the first equality is due to $\cI_{new}(j^s_t)\cap\cI_{new}(j^s_{t^{\prime}})=\emptyset$ for $j^s_t\neq j^s_{t^{\prime}}$, which is ensured by Line 11 and definition of $\cI_{new}(j^s_t)$ in Line 9; and the second equality uses the update in Line 10, which ensures that $\bigcup_{j^s_t\in\tilde{J}_s} \cI_{new}(j^s_t)\subseteq {I}^{\text{out}}_s$. 

The total cost of the operation of Line 12 is a summation of \eqref{eq:repeat2} over $s\in[s^{\text{out}}]$, which is equal to
\begin{equation}\label{eq:repeat3}
    \cO\left(m\sum_{s\in[s^{\text{out}}]} \left|I^{\text{out}}_s \right|\right)=\cO\left(m\left|\bigcup_{s\in[s^{\text{out}}]}I^{\text{out}}_s \right|\right)=\cO(mn).
\end{equation}
Similar to \eqref{eq:repeat1}, the first equality of \eqref{eq:repeat3} is due to the fact that $I^{\text{out}}_s\cap I^{\text{out}}_{s^{\prime}}=\emptyset$ for $s\neq s^{\prime}$; the second equality uses $I^{\text{out}}_s\subseteq[n]$, $s\in[s^{\text{out}}]$. 

Combining the above analysis, we conclude that the computational cost of $\cE$ is at most $\cO(m^2+mn+n^2)=\cO((m+n)^2)$.
 \subsection{Proof of Lemma \ref{le:solution}}
  We derive the expression of $\mu^*$ based on the index sets output by the classification procedure:
  \[\left\{I^*_l,\tilde{J}^*_l,i^l,j^l_t,\cI_{new}(j^l_t):t\in[|\tilde{J}^*_l|],l\in[s]\right\}=\cE\left(\left\{J^*_i,i\in[n]\right\}\right),\] and then show that $\mu^*$ equals the output $\mu=\cP(\cE(\{J^*_i,i\in[n]\}))$.
  
We first show $\mu^*_j=\mu^*_{j^l_1}+a_j$ for $j\in \tilde{J}^*_l$ (for any $l\in[s]$). Notice $\tilde{J}^*_l=J^*_{i^l}\cup(\bigcup_{t\in[|\tilde{J}^*_l|]}\bigcup_{i\in\cI_{new}(j^l_t)}J^*_i)$. We  prove the equality $\mu^*_j=\mu^*_{j^l_1}+a_j$ by induction. 

Let us begin with $\mu^*_j$, $j\in J^*_{i^l}$.
  By Line 8 of  Algorithm \ref{al:class}, we see that $j^l_1\in J^*_{i^l}$. It follows from the definition of $J^*_{i^l}$ that $\log(v_{i^lj^l_1})-\mu^*_{j^l_1}=\log(v_{i^lj})-\mu^*_{j}$ for $j\in J^*_{i^l}$. This, together with $a_j=\log(v_{i^lj})-\log(v_{i^lj^l_1})$ in Line 3 of Algorithm \ref{al:solution}, yields
     \begin{equation}\label{eq:mujl1}
         \mu^*_{j}=\mu^*_{j^l_1}+\log(v_{i^lj})-\log(v_{i^lj^l_1})=\mu^*_{j^l_1}+a_j,\quad\forall\ j\in J^*_{i^l}.
     \end{equation}
     Now, we assume that $\mu^*_j=\mu^*_{j^l_1}+a_j$ holds for $j\in J_{done}^{t-1}$ with some $t\in [|\tilde{J}^*_l|]$, where
     \[J_{done}^{t-1}\coloneqq J^*_{i^l}\cup\left(\bigcup_{\tau\in[t-1]}\bigcup_{i\in\cI_{new}(j^l_\tau)}J^*_i\right);\]
     and prove that $\mu^*_j=\mu^*_{j^l_1}+a_j$ for  $j\in J^*_{i^l}\cup(\bigcup_{\tau\in[t]}\bigcup_{i\in\cI_{new}(j^l_{\tau})}J^*_i)$ to finish the induction. Given the assumption, it suffices to prove that $\mu^*_j=\mu^*_{j^l_1}+a_j$ for $j\in  \bigcup_{i\in\cI_{new}(j^l_{t})}J^*_i\setminus J_{done}^{t-1}$ to complete the induction.
     
     Notice that $j^l_t\in J_i^*$ for all $i\in\cI_{new}(j^l_t)$. We have
     \[\mu^*_{j}=\mu^*_{j^l_t}+\log(v_{ij})-\log(v_{ij^l_t}),\qquad\forall\ j\in  \bigcup_{i\in\cI_{new}(j^l_{t})}J^*_i,~i\in\cI_{new}(j^l_t).\] 
     On the other hand, since $j^l_t\in J^*_{i^l}\cup(\bigcup_{\tau\in[t-1]}\bigcup_{i\in\cI_{new}(j^l_{\tau})}J^*_{i})$, one has $\mu^*_{j^l_t}=\mu^*_{j^l_1}+a_{j^l_t}$ by assumption. It follows that
     \begin{equation*}\label{eq:mujl2}
     \begin{aligned}
          \mu^*_{j}&=\mu^*_{j^l_t}+\log(v_{ij})-\log(v_{ij^l_t})\\ 
          &=\mu^*_{j^l_1}+a_{j^l_t}+\log(v_{ij})-\log(v_{ij^l_t})\\
          &=\mu^*_{j^l_1}+a_j,
          \qquad\qquad\forall~  j\in  \bigcup_{i\in\cI_{new}(j^l_{t})}J^*_i\setminus J_{done}^{t-1},
     \end{aligned}
     \end{equation*}
     where the third equality is due to the definition of $a_j$ in Line 5 of Algorithm \ref{al:solution}.
     Therefore, we complete the induction, ensuring that $\mu^*_j=\mu^*_{j^l_1}+a_j$ for all $j\in \tilde{J}^*_l$.
     
     Next, we compute $\mu^*_j$ for $j\in\tilde{J}^*_l$. We consider two cases: (i)  $0\in\tilde{J}^*_l$; (ii) $0\notin\tilde{J}^*_l$.

    (i) For the case that $0\in\tilde{J}^*_l$, it must be quasi-linear setting, where $\alpha=1$. We have $0=\log(\alpha)=\mu^*_0=\mu^*_{j^l_1}+a_0$, and hence $\mu^*_{j^l_1}=-a_0$, which accords with $\mu_{j^l_1}=-a_0$ given by Line 9 of Algorithm \ref{al:solution}. Further, since the output $\mu$ also satisfies $\mu_j=\mu_{j^l_1}+a_j$, $j\in \tilde{J}^*_l$, we see that $\mu^*_j=\mu_j$ for $j\in \tilde{J}^*_l$. 

     (ii) For the case that $0\notin\tilde{J}^*_l$,
     substitute $\mu^*_j=\mu^*_{j^l_1}+a_j$, $j\in \tilde{J}^*_l$ into the necessary condition \eqref{eq:mu*2}.  We obtain
     \[\mu^*_{j^l_1}=\log\left(\sum_{i\in I^*_l}B_i\right)-\log\left(\sum_{j\in\tilde{J}^*_l}\exp({a_j})\right),\] which accord with $\mu_{j^l_1}$ given by Line 12 of Algorithm \ref{al:solution}. As $\mu_j=\mu_{j^l_1}+a_j$, $j\in \tilde{J}^*_l$, we have  $\mu^*_j=\mu_j$ for $j\in \tilde{J}^*_l$. 
     
     Combining the two cases, we see that $\mu^*_j=\mu_j$ for all $j\in \bigcup_{l\in[s]}\tilde{J}^*_l$. Then, the desired $\mu^*=\mu$ follows from $[m]\subseteq\bigcup_{l\in[s]}\tilde{J}^*_l$ given by Lemma \ref{le:recover}. 
     We conclude that the output vector is the optimal solution of \eqref{eq:QL-exp} by Lemma \ref{le:recover}, i.e., $\cP(\cE(\{J^*_i,i\in[n]\}))=\mu^*$.

     The dominant computation cost is due to the operations of selecting $j\in J_i\setminus J_{done}$ for $i\in\cI_{new}(j^l_t)$ in Line 5, and computing $J_{done}\cup(\bigcup_{i\in \cI_{new}(j^l_t)}J_i)$ in Line 6. Observe that their cost is of the same order as Line 12 of Algorithm \ref{al:class}, which is $\cO(mn)$ by the arguments of \eqref{eq:repeat2}, \eqref{eq:repeat3} in the proof of Lemma \ref{le:class} (ii). We conclude that the computational cost of Algorithm \ref{al:solution} is $\cO(mn)$. 
\subsection{Proof of Remark \ref{re:recovery}}\label{appen:re_recovery}
It suffices to show that the price vector $p^*$ and allocation $x^*$ satisfy (E1), (E2), and (E3) of Definition \ref{def:ME}. First, the definition of $x^*$ and the first equation of \eqref{eq:test_stationarity} directly yields (E1):
\[\sum_{j\in[m]}p^*_jx^*_{ij}=\sum_{j\in[m]}\lambda^*_{ij}\leq B_i, \]
For (E3), we notice $p^*_j=\exp({\mu^*_j})$ and use the second equation of \eqref{eq:test_stationarity} to compute
\[\sum_{i\in[n]}x^*_{ij}=\frac{\sum_{i\in[n]}\lambda^*_{ij}}{p^*_j}=\frac{\exp({\mu^*_j})}{p^*_j}=1;\quad\forall\ j\in[m].\]
It is left to prove (E2). Notice that $x^*_{ij}=\lambda^*_{ij}/p^*_j$ and $p^*_j>0$. Using $\lambda^*_{ij}=0$ for $j\notin\cJ_i(\mu^*)$, we see that $x^*_{ij}>0$ only if $j\notin\cJ_i(\mu^*)$. By the definition of $\cJ_i(\mu^*)$, we further have
\[x^*_{ij}>0\quad \text{ only if }\quad j\in\argmax_{j\in\{0\}\cup[m]}\left\{\log(v_{ij})-\mu^*_j\right\}=\argmax_{j\in\{0\}\cup[m]}\left\{\frac{v_{ij}}{p^*_j}\right\}.\]
Then, we consider linear and quasi-linear utilities, respectively. First, for \textit{linear utilities}, we have 
\[u_i(x^*_i)=\sum_{j\in[m]}v_{ij}x^*_{ij}=\sum_{j\in[m]}\frac{v_{ij}}{p^*_j}\lambda^*_{ij}=\sum_{j\in[m]}\lambda^*_{ij}\max_{j\in\{0\}\cup[m]}\left\{\frac{v_{ij}}{p^*_j}\right\}=B_i\max_{j\in\{0\}\cup[m]}\left\{\frac{v_{ij}}{p^*_j}\right\}. \]
Here, the last equality is due to the first equation of \eqref{eq:test_stationarity} and $0\notin \cJ_i(\mu^*)$, $\lambda_{i0}=0$ for linear utilities ($p^*_0=+\infty$). The above equation, together with Lemma \ref{le:max} (i), implies (E2) for linear utilities.

Next, we consider \textit{quasi-linear utilities}. 

\textbf{Case (i)}: $0\in\cJ_i(\mu^*)=\argmax_{j\in\{0\}\cup[m]}\left\{{v_{ij}}/{p^*_j}\right\}$.

 Recall $v_{i0}=1$ and $p_0=1$. We know that (i) $v_{ij}\leq p^*_j$ for $j\in\{0\}\cup[m]$; (ii) $x^*_{ij}>0$ and $j\in\cJ_i(\mu^*)$ only if $v_{ij}=p^*_j$. It follows that 
 \[u_i(x^*_i)=\sum_{j\in[m]}(v_{ij}-p^*_j)x^*_{ij}=0= \max_{x_i\in\R^m_+} \left\{u_{i}(x_i):\sum_{j\in[m]} p^*_jx_{ij} \leq  B_i\right\}, \]
 where the last equality is due to Lemma \ref{le:max} (ii). This proves (E2) for Case (i) of quasi-linear utilities.

\textbf{Case (ii)}: $0\notin\cJ_i(\mu^*)=\argmax_{j\in\{0\}\cup[m]}\left\{{v_{ij}}/{p^*_j}\right\}$.

In this case, we have $\lambda^*_{i0}=0$ and hence $\sum_{j\in[m]}\lambda^*_{ij}=B_i$ by the first equation of \eqref{eq:test_stationarity}. We obtain
\[u_i(x^*_i)=\sum_{j\in[m]}(v_{ij}-p^*_j)x^*_{ij}=\sum_{j\in[m]}\left(\frac{v_{ij}}{p^*_j}-1\right)\lambda^*_{ij}. \]
Also, by $\lambda^*_{ij}=0$ for $j\notin\cJ_i(\mu^*)$ and ${v_{ij}}/{p^*_j}=\max_{j\in\{0\}\cup[m]}\{{v_{ij}}/{p^*_j}\}$ for $j\in\cJ_i(\mu^*)$, we further have
\[u_i(x^*_i)=\sum_{j\in[m]}\left(\frac{v_{ij}}{p^*_j}-1\right)\lambda^*_{ij}=\sum_{j\in[m]}\lambda^*_{ij}\max_{j\in\{0\}\cup[m]}\left\{\frac{v_{ij}}{p^*_j}-1\right\}=B_i\max_{j\in\{0\}\cup[m]}\left\{\frac{v_{ij}}{p^*_j}-1\right\}. \]
This, along with Lemma \ref{le:max} (ii), implies (E2) for Case (ii) of quasi-linear utilities. We complete the proof.

  \section{Test of Optimality}\label{appen:test}
In this section, we consider the test of the optimality of Problem \eqref{eq:QL-exp}, which is used for the stopping criterion of the adaptive APM in Sec. \ref{sec:exact-CE}. Due to the convexity of the objective function $F$, it suffices to check whether $0\in\partial F(\mu)$, or equivalently the feasibility of \eqref{eq:optimality2} w.r.t. $\lambda\in\R^{n\times m}$. Further, by introducing $\lambda_{i0}\coloneqq B_i-\sum_{j\in[m]}\lambda_{ij}$, the test can be further formulated as checking the feasibility of the following linear system w.r.t. $\lambda\in\R^{n\times(m+1)}$:
\begin{equation}\label{eq:test_mu}
     \begin{array}{rlrll}
        \sum\limits_{j\in\{0\}\cup[m]}\lambda_{ij}&=B_i,\quad&\forall\  i\in[n]; \qquad\quad   \sum\limits_{i\in [n]}\lambda_{ij}&=\exp(\mu_j),\quad &\forall\ j\in[m];  \\
      \lambda_{ij}&\geq0,\quad &\forall\ j\in\{0\}\cup[m],i\in[n];\quad  \qquad\lambda_{ij}&=0,\quad &\forall\ j\notin \cJ_i(\mu),i\in[n].\\
    \end{array}
\end{equation}
We demonstrate that testing the feasibility of \eqref{eq:test_mu} is equivalent to solving a max-flow problem. Specifically, let us define the vertex set $V$ and the edge set $E$ by
\[V\coloneqq \left\{s;\ g_j,j\in[m];\ b_i,i\in[n];\ t\right\};\]
\[E\coloneqq\left\{(s,g_j),j\in[m];\quad (g_j,b_i),j\in\cJ_i(\mu),i\in[n];\quad (b_i,t),i\in[n]\right\}.\]
Let the capacities of the edges $(s,g_j)$ and $(b_i,t)$ be  $\exp(\mu_j)$ and $B_i$, respectively. Further, let the capacities of the edges $(g_j,b_i)$ be $B_i$ for $j\in\cJ_i(\mu)$, $i\in[n]$. For $i\in[n]$, $j\in[m]$, let  $q_j$, $p_i$, and $\lambda_{ij}$ denote the flows of $(s,g_j)$, $(b_i,t)$, and $(g_j,b_i)$, respectively. In particular, we set $\lambda_{ij}\equiv0$ for $j\notin \cJ_i(\mu)$, which is equivalent to the fact that the edge $(g_j,b_i)$ does not exist. Then, we obtain the following max-flow problem in optimization form, where the redundant constraints $\lambda_{ij}\leq B_i$, $i\in[n]$, $j\in\cJ_i(\mu)$ are omitted.
\begin{equation}\label{eq:maxflow}
    \begin{array}{cll}
         \max\limits_{q_i,p_j,\lambda_{ij}\geq0} &\quad \sum\limits_{i\in[n]}q_j& \\
         \text{subject to }  &\quad q_j\leq \exp(\mu_j),\quad\sum\limits_{i\in[n]}\lambda_{ij}=q_j,\qquad&\forall\ j\in[m]\\  
         &\quad p_i\leq B_i, \quad\sum\limits_{j\in\cJ_i(\mu)}\lambda_{ij}=p_i,\qquad&\forall\ i\in[n]\\
         &\quad \lambda_{ij}=0, \qquad&\forall\ j\notin\cJ_i(\mu).
    \end{array}
\end{equation}
\begin{proposition}[Max-flow Formulation for Optimality Test]\label{pro:maxflow}
    The linear system \eqref{eq:test_mu} is feasible if and only if the optimal value of the max-flow problem \eqref{eq:maxflow} equals $\sum_{j\in[m]}\exp(\mu_j)$.
\end{proposition}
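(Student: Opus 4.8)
The plan is to establish the equivalence by matching feasible points of the linear system \eqref{eq:test_mu} with maximum flows of the network $(V,E)$ of value $\sum_{j\in[m]}\exp(\mu_j)$, the key elementary fact being that $\sum_{j\in[m]}\exp(\mu_j)$ is the capacity of the cut that isolates the source $s$. Concretely, I would first note that every feasible point of \eqref{eq:maxflow} satisfies $\sum_{j\in[m]}q_j\leq\sum_{j\in[m]}\exp(\mu_j)$ because $q_j\leq\exp(\mu_j)$, with equality iff $q_j=\exp(\mu_j)$ for all $j$, i.e. iff every edge $(s,g_j)$ is saturated. Hence ``the optimal value of \eqref{eq:maxflow} equals $\sum_{j\in[m]}\exp(\mu_j)$'' is equivalent to the existence of a feasible $(q,p,\lambda)$ for \eqref{eq:maxflow} that saturates all source edges, and it then remains to show that such a flow exists precisely when \eqref{eq:test_mu} is solvable.

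For the direction ``$\Leftarrow$'' I would take a solution $\lambda\in\R^{n\times(m+1)}$ of \eqref{eq:test_mu}, place the flow $\lambda_{ij}$ on the edge $(g_j,b_i)$ for each $j\in[m]$ (these are nonnegative, and vanish whenever $j\notin\cJ_i(\mu)$, i.e. whenever $(g_j,b_i)\notin E$), and set $q_j\coloneqq\sum_{i\in[n]}\lambda_{ij}=\exp(\mu_j)$ on $(s,g_j)$ and $p_i\coloneqq\sum_{j\in[m]}\lambda_{ij}=B_i-\lambda_{i0}\leq B_i$ on $(b_i,t)$. The two equality constraints of \eqref{eq:test_mu} are exactly flow conservation at the $g_j$'s and the budget identity at the $b_i$'s, and all capacities are respected, so $(q,p,\lambda)$ is feasible for \eqref{eq:maxflow} with objective $\sum_{j\in[m]}\exp(\mu_j)$, hence optimal by the cut bound. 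For ``$\Rightarrow$'' I would start from a flow attaining $\sum_{j\in[m]}\exp(\mu_j)$, which necessarily has $q_j=\exp(\mu_j)$ for all $j$, keep the edge flows $\lambda_{ij}$ ($j\in[m]$), and recover the missing column by $\lambda_{i0}\coloneqq B_i-\sum_{j\in[m]}\lambda_{ij}=B_i-p_i\geq 0$ (since $p_i\leq B_i$). Then $\sum_{j\in\{0\}\cup[m]}\lambda_{ij}=B_i$, $\sum_{i\in[n]}\lambda_{ij}=q_j=\exp(\mu_j)$ for $j\in[m]$, $\lambda_{ij}\geq 0$, and $\lambda_{ij}=0$ for $j\notin\cJ_i(\mu)$, so $\lambda$ solves \eqref{eq:test_mu}.

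The step I expect to require the most care is the accounting for the dummy commodity $j=0$, which contributes no vertex and no edge to $(V,E)$: one must pin down that the slack column $\lambda_{i0}$ of \eqref{eq:test_mu} coincides with the residual capacity $B_i-p_i$ of the sink edge $(b_i,t)$, so that the sign condition $\lambda_{i0}\geq 0$ reproduces exactly $p_i\leq B_i$, and that the constraint ``$\lambda_{ij}=0$ for $j\notin\cJ_i(\mu)$'' on the $j=0$ entry is consistent with the flow picture — here I would use the global balance $\sum_{j\in[m]}q_j=\sum_{i\in[n]}p_i$ together with the structure of the active sets. Once this identification is in place, the remaining ingredients — the capacity checks, the conservation equations, and the observation that only source-saturating flows are optimal — are routine. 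I would close by recording that $(V,E)$ has $m+n+2$ vertices and $O(mn)$ edges, so the optimality test runs within the $O(mn)$ per-iteration budget that the adaptive APM maintains.
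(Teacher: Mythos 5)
Your two constructions are the same as the paper's: the source-cut bound $\sum_{j}q_j\le\sum_{j}\exp(\mu_j)$, the translation of a solution of \eqref{eq:test_mu} into a source-saturating flow, and the reconstruction $\lambda_{i0}\coloneqq B_i-p_i$ from a maximum flow. (Minor point: your ``$\Leftarrow$''/``$\Rightarrow$'' labels are swapped relative to the statement, though the two arguments together do cover both implications.) However, the step you yourself single out as delicate is exactly the step your proposal does not close, and the ingredients you name do not suffice. The difficulty is the constraint $\lambda_{ij}=0$ for $j\notin\cJ_i(\mu)$ applied at $j=0$: whenever $0\notin\cJ_i(\mu)$ (which is \emph{always} the case under linear utilities, since $\mu_0=+\infty$ there), system \eqref{eq:test_mu} forces $\lambda_{i0}=0$, i.e.\ $\sum_{j\in[m]}\lambda_{ij}=B_i$, so the sink edge $(b_i,t)$ must be \emph{saturated}, not merely capacity-feasible. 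A maximum flow of value $\sum_j\exp(\mu_j)$ only yields the aggregate balance $\sum_i p_i=\sum_j\exp(\mu_j)$; it does not force $p_i=B_i$ for each such $i$, and no rerouting can achieve this when $\sum_j\exp(\mu_j)<\sum_{i:\,0\notin\cJ_i(\mu)}B_i$.

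Concretely, take $n=2$, $m=1$, $\cJ_1(\mu)=\cJ_2(\mu)=\{1\}$ and $\exp(\mu_1)<B_1+B_2$. Problem \eqref{eq:maxflow} attains the value $\exp(\mu_1)=\sum_{j\in[m]}\exp(\mu_j)$ (push $\exp(\mu_1)$ through $g_1$ and split it arbitrarily between $b_1$ and $b_2$), yet \eqref{eq:test_mu} forces $\lambda_{10}=\lambda_{20}=0$, hence $\lambda_{11}=B_1$, $\lambda_{21}=B_2$ and $B_1+B_2=\exp(\mu_1)$, so the system is infeasible. Your direction ``flow value $=\sum_j\exp(\mu_j)$ implies feasibility'' therefore cannot be completed as proposed; ``the structure of the active sets'' offers no rescue because $\mu$ is an arbitrary candidate point, not assumed optimal. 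To make the equivalence work one must build the saturation requirement into the flow model — e.g.\ impose a lower bound $p_i\ge B_i$ on $(b_i,t)$ for every $i$ with $0\notin\cJ_i(\mu)$ (a max-flow with lower bounds, reducible to an ordinary max-flow), or strengthen the right-hand side of the equivalence to require a maximum flow that additionally saturates those sink edges. For what it is worth, the paper's own proof asserts feasibility at this same point without verifying the $j=0$ constraint, so your proposal faithfully reproduces the intended argument; but the step you flagged is a genuine gap, not a routine verification.
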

Given Proposition \ref{pro:maxflow}, to test the optimality of $F$ at $\mu\in\R^m$, we only need to solve the max-flow problem \eqref{eq:maxflow} and check whether its optimal value equals $\sum_{j\in[m]}\exp(\mu_j)$. Hence, the involved computational cost is bounded by the cost of solving a max-flow problem with the vertex set $V$ and edge set $E$, which has been extensively studied in the literature. Efficient methods include the push–relabel algorithm \cite{goldberg1988new}, the algorithm of \citet{orlin2013max}, and the high-probability algorithm by \citet{chen2022maximum}, which have a time complexity of $\cO(|V|^3)=\cO((m+n)^3)$, $\cO(|V|\cdot|E|)=\cO((m+n)mn)$, and $\cO(|E|^{1+o(1)}L)=\cO((mn)^{1+o(1)}L)$, respectively. Here, $L$ is the bit-length of the input data (i.e., edge capacities).
Note that computing an $\epsilon$-CE ($\epsilon\leq1$) through price-adjustment methods requires a least time complexity of $\cO(mn\cdot\min\{m,n\})$\footnote{This complexity can be deduced from two facts: (i) The cost of each iteration of price-adjustment methods is $\cO(mn)$; (ii) The iteration number needed to compute an $\epsilon$-CE ($\epsilon\leq1$) is at least $\cO(\min\{m,n\})$; see, e.g., \eqref{eq:iteration_number} for APM.}. The optimality tests would not significantly increase the total computational cost of the adaptive price-adjustment methods presented in Sec. \ref{sec:exact-CE}.
\begin{proof}[Proof of Proposition \ref{pro:maxflow}]
First, suppose that the linear system \eqref{eq:test_mu} is feasible. We show that the optimal value of \eqref{eq:maxflow}, denoted by $\bar{v}$, equals $\sum_{j\in[m]}\exp(\mu_j)$. To see this, let $\lambda^*\in\R^{n\times(m+1)}$ be a solution of \eqref{eq:test_mu} and $\bar\lambda\in\R^{n\times m}$ be defined by $\bar\lambda_{ij}=\lambda^*_{ij}$ for $i\in[n]$, $j\in[m]$. Then, we have \[\sum_{i\in[n]}\bar{\lambda}_{ij}=\exp(\mu_j); \quad\sum_{j\in\cJ_i(\mu)}\bar{\lambda}_{ij}\leq B_i;\quad \bar{\lambda}_{ij}=0\quad \forall\ j\notin\cJ_i(\mu).\] Clearly, the pair $(\bar{q},\bar{p},\bar{\lambda})$ with $\bar{q}_j=\sum_{i\in[n]}\bar{\lambda}_{ij}=\exp(\mu_j)$ and $\bar{p}_i=\sum_{j\in\cJ_i(\mu)}\bar{\lambda}_{ij}\leq B_i$ is feasible for the max-flow problem \eqref{eq:maxflow}. It follows a lower bound on the optimal value of \eqref{eq:maxflow}:
\[\bar{v}\geq\sum_{i\in[n]}\bar{q}_j=\sum_{j\in[m]}\exp(\mu_j).\]
On the other hand, by the constraints $q_j\leq \exp(\mu_j)$, $j\in[m]$, it is direct to see $\bar{v}\leq\sum_{j\in[m]}\exp(\mu_j)$. We conclude that $\bar{v}=\sum_{i\in[n]}\bar{q}_j=\sum_{j\in[m]}\exp(\mu_j)$. 

The remaining task is to show that the equality $\bar{v}=\sum_{j\in[m]}\exp(\mu_j)$ implies the feasibility of the linear system \eqref{eq:test_mu}. Given $\bar{v}=\sum_{j\in[m]}\exp(\mu_j)$, we let the pair $(\bar{q},\bar{p},\bar{\lambda})$ be the optimal solution of the max-flow problem \eqref{eq:maxflow}. Then, we have \[\sum_{j\in[m]}\bar{q}_j=\bar{v}=\sum_{j\in[m]}\exp(\mu_j).\] This, together with the constraints $\bar{q}_j\leq \exp({\bar{\mu}_j})$, $j\in[m]$, yields
$\bar{q}_j=\exp(\mu_j)$. Using other constraints of \eqref{eq:maxflow}, we further see that $\bar{\lambda}_{ij}\geq0$ for $i\in[n]$, $j\in[m]$ and
\[\sum_{i\in[n]}\bar{\lambda}_{ij}=\bar{q}_j=\exp(\mu_j),\quad\forall\ j\in[m];\quad  \sum_{j\in[m]}\bar{\lambda}_{ij}=\bar{p}_i\leq B_i,\quad\forall\ i\in[n];\quad\bar{\lambda}_{ij}=0\quad\forall\ j\notin\cJ_i(\mu),i\in[n].\]
Now, define $\lambda^*\in\R^{n\times(m+1)}$ by $\lambda^*_{ij}=\bar{\lambda}_{ij}$ for $i\in[n]$, $j\in[m]$ and $\lambda^*_{i0}=B_i-\bar{p}_i$. Then, the above conditions on $\bar\lambda$ guarantee that $\lambda^*$  is a feasible solution for \eqref{eq:test_mu}. We complete the proof.
\end{proof}

\end{document}